\documentclass[11pt]{amsart}%[smallextended,numbook]{svjour3}
%\smartqed 
\usepackage{amssymb,amsfonts,amsmath}
\usepackage[all,2cell]{xy}
\usepackage{graphicx}
\usepackage[mathscr]{eucal}
\usepackage{enumerate}
\usepackage{indentfirst}
\usepackage{fancyhdr}

\theoremstyle{plain}
\newtheorem{thm}{\bf Theorem}[section]
\newtheorem{prop}[thm]{\bf Proposition}
\newtheorem{lem}[thm]{\bf Lemma}
\newtheorem{cor}[thm]{\bf Corollary}

\theoremstyle{definition}
\newtheorem{dfn}[thm]{\bf Definition}

\newtheorem{ex}[thm]{\bf Example}

\theoremstyle{remark}
\newtheorem{rem}[thm]{\bf Remark}

\def \A{\mathbb{A}}
\def \G{\mathbb{G}_m}

\def \k{\Bbbk}
\def \Z{\mathbb{Z}}
\def \Q{\mathbb{Q}}
\def \C{\mathbb{C}}
\def \R{\mathbb{R}}

\def \P{\mathbb{P}}

\newcommand{\acknowledge}{\subsection*{Acknowledgments}}
\newcommand{\convnotation}{\subsection*{Conventions and notation}}
\newcommand{\furnotation}{\subsection*{Further notation}}

%\journalname{Mathematische Zeitschrift}

%%%%%%%%%%%%%%%%%%%%%%%%%%%%%%%%%%%%%%%%%%%%%%%% Beginning of document  %%%%%%%%%%%%%%%%%%%%%%%%%%%%%%%

\begin{document}

\title{   
Algebraic rational cells and   
equivariant intersection theory  
}
\author[Richard P. Gonzales]{Richard P. Gonzales\,*}
\thanks{* Supported by the Institut des Hautes \'{E}tudes Scientifiques,   
the Max-Planck-Institut f\"{u}r Mathematik, T\"{U}B\.{I}TAK 
Project No. 112T233, and DFG Research Grant PE2165/1-1.} 
\address{
Mathematisches Institut, 
Heinrich-Heine-Universit\"{a}t,   
40225 D\"{u}sseldorf, 
Germany}
\email{rgonzalesv@gmail.com}

\begin{abstract}
We provide 
a notion of algebraic rational cell 
with 
applications to
intersection theory 
on singular varieties with torus action. 
Based on this notion, 
we study $\Q$-filtrable varieties: 
algebraic varieties where a torus acts with 
isolated fixed points, such that   
the associated Bia{\l}ynicki-Birula decomposition 
consists of algebraic rational cells.
We show that the rational equivariant Chow group 
of any $\Q$-filtrable variety is  
freely generated by the classes of the cell closures. 
We apply this result to group embeddings,  
and more generally to spherical varieties. 
\keywords{Chow groups \and Torus actions \and Cell decompositions \and Algebraic monoids \and Spherical varieties}
%\subclass{(MSC 2010) 14C15 \and 14L30 \and 14M27}
\end{abstract}

\maketitle

\section{Introduction and motivation}
Let $\k$ be an algebraically closed field. 
The most commonly studied cell decompositions 
in algebraic geometry are those 
obtained by the method of Bia{\l}ynicki-Birula \cite{bb:torus}. 
If $\G\simeq \k^*$ acts on a smooth projective variety $X$  
with finitely many fixed points $x_1,\ldots, x_m$, then 
$X=\bigsqcup X_i$, where 
$$X_i=\{x\in X\;|\; \lim_{t\to 0}tx=x_i\}.$$ 
%The $X_i$ are locally closed subvarieties of $X$, and 
%The $X_i$ are 
%called cells of the decomposition. Moreover, 
Moreover, the cells $X_i$ are isomorphic to affine spaces. %;  
%that is $X_i\simeq \A^{n_i}$, for every $i$. 
From this one concludes e.g. that 
the Chow groups of $X$ are freely generated 
by the classes of the cell closures $\overline{X_i}\subseteq X$. 
This is quite notable, because   
the Chow groups of smooth varieties 
need not be finitely generated 
(consider e.g. a 
smooth projective curve of genus one).  
If $\k=\C$, then this decomposition   
implies that $X$ has no singular 
cohomology in odd degrees, and that 
the cycle map $cl_X:A_*(X)\to H_{*}(X)$ 
is an isomorphism, %for every degree $k$, 
to mention 
just a few %of the many 
interesting applications. 
The {\em BB-decomposition} makes sense even 
if $X$ is singular, but the cells may no longer be so well-behaved.

In \cite{go:cells} we study the BB-decompositions of  
possibly singular complex projective varieties, 
assuming that 
the cells are rationally smooth (i.e. {\em rational cells}). 
%This was done from a topological viewpoint. 
%The approach taken there is topological. 
Recall that a 
complex algebraic variety $X$, of dimension $n$,  
is called {\em rationally smooth}
if, for every $x\in X$, we have   
$$
\begin{array}{cccccc}
H^m(X,X-\{x\};\Q)=(0)\;\;{\rm if}\; m\neq 2n, 
&{\rm and} \;\; 
%for $m\neq 2n$, 
H^{2n}(X,X-\{x\};\Q)=\mathbb{Q}. 
\end{array}
$$ 
%for all $x\in X$. 
Such 
varieties satisfy Poincar\'e duality with rational coefficients.
If $X_i$ as above is a rational cell, 
then $\P(X_i):=(X_i\setminus\{x_i\})/\G$ 
is a rational cohomology complex projective space. 
Many important 
results on the equivariant cohomology of projective 
$T$-varieties 
%having finitely many fixed points %such that  %
%and 
admitting 
a BB-decomposition  
%$X=\bigsqcup X_i$ 
into rational cells 
%$X_i$ 
%(i.e. $\Q$-filtrable varieties) 
are provided in 
\cite{go:cells}; for instance, such varieties 
have no cohomology in odd degrees and their 
equivariant cohomology is freely generated by the 
classes of the cell closures. %$\overline{X_i}$. 

\smallskip

The purpose of this paper is to provide analogues 
of such results
%, along with conditions for Poincar\'e duality, 
in the context of intersection theory for 
schemes with an action of a torus $T$ (i.e. $T$-schemes). %with torus action.    
For this, we introduce the notion of 
{\em algebraic rational cell}.  
Concisely, let $X$ be an affine $\G$-variety with 
%a $\G$-action and 
an attractive fixed point $x$. Then $X$ is %called 
an algebraic rational cell if %the associated link 
$\P(X):=[X\setminus \{0\}]/\G$
satisfies 
$$A_*(\P(X))_\Q\simeq A_*(\P^{n-1})_\Q,$$ 
where $n=\dim(X)$. 
The definition applies to actions of 
higher dimensional tori as well 
(Definition \ref{rationalcell.dfn}).  
Algebraic rational cells are modelled after 
(topological) rational cells \cite{go:cells}, 
although the resulting objects are not equivalent. %objects. 
%for the corresponding cycle map need not be an isomorphism.  
In what follows, 
we show that 
algebraic rational cells 
are  
a good substitute for 
the notion of affine space in the 
study of Chow groups of singular varieties. 
%For instance, 
%the BB-decomposition $X=\bigsqcup X_i$ is such that each $X_i$ 
%is an algebraic rational cell, then $A^{\G}_*(X)$ 
%is freely generated by the classes of the cell closures 
%$\overline{X_i}$. 
This has applications to 
embedding theory (Section 5) %, Poincar\'e duality (Section 6)
and the geometry of spherical varieties (Section 6). 
%Furthermore, 
%in the case of group embeddings 
%and more general spherical varieties, 
In addition, some links  
between our present 
approach and that of \cite{go:cells} 
%are made 
are built 
(Theorems \ref{main.monoid.thm},  \ref{ratsm.emb.are.qfilt.thm}, and \ref{sp_rs_impl_crs}). 
The techniques are mostly algebraic, and no essential use of the cycle map is made, except in Section 6. 
%used in 
%an essential way. 

\smallskip
%To explain the organization and state the main results of this article, 
%This article is organized as follows. 
Here is an outline of the paper. 
%Let $T$ be a torus. 
Section 2 briefly 
reviews equivariant Chow groups of $T$-schemes. %(i.e. schemes acted on by a torus $T$). 
We also recall and discuss the notion of equivariant multiplicities 
at nondegenerate fixed points. %of $T$-varieties. %of a torus action. 
%At the end of the section we deduce %
The section concludes with some inequalities relating Chow groups and fixed point loci. 
%Let $T$ be a torus. Let $S$ be the symmetric algebra over $\Q$ of 
%the character group of $T$. For a $T$-variety $X$, let $A^T_*(X)$ 
%be the equivariant Chow group. The equivariant Chow group of a point 
%identifies to $S$; more generally, $A^T_*(X)$ is a $S$-module. 
In Section 3 we study the intersection-theoretical properties 
of algebraic rational cells (Proposition 
\ref{char.ratcell.prop}, Theorem \ref{char.thm}, Corollary \ref{gkm.cells.cor}). 
Next, in Section 4, we introduce the concept of (algebraically) 
$\Q$-filtrable spaces: projective $T$-varieties 
with isolated fixed points, such that 
the associated BB-decomposition is filtrable, and 
consists of algebraic rational cells (Definition \ref{qfiltrable.def}). 
The key result is 
Theorem %(\ref{qfiltrable.main}, 
\ref{qfilt.thm}. It asserts that the rational equivariant Chow group 
of any $\Q$-filtrable variety is freely generated by the classes of the cell closures. 

\smallskip

Having developed the 
theoretical framework for the study of 
$\Q$-filtrable varieties, 
we devote the last two sections to 
%supply our 
%theory with 
examples and applications. 
%More precisely, we apply our theory to spherical varieties. 
Let $G$ be a connected reductive group. 
Recall that a normal $G$-variety $X$ is called 
{\em spherical} if a Borel 
subgroup $B$ of $G$ has a dense orbit in 
$X$. Then it is known that $G$ and $B$ 
have finitely many orbits in $X$. 
It follows that $X$ contains only 
finitely many fixed 
points of a maximal torus $T\subset B$, see e.g.   
\cite{ti:sph}. These features make spherical varieties 
especially suitable for applying the techniques 
of this paper.

In Section 5 we apply our methods to a remarkable subclass of spherical varieties, namely,   
group embeddings. (We refer to  %of connected reductive groups 
that section for a definition of this key notion, and that of reductive monoids.)  
In this context, %one of our main results (
Theorem  \ref{main.monoid.thm}  
states that 
reductive monoids which are algebraic rational cells   
are characterized 
in the same way as rationally smooth monoids 
%. This adds to Renner's list of equivalences in  
%\cite{re:hpoly} and 
\cite{re:ratsm}.  
The second half of Section 5 deals with projective group embeddings (i.e. projectivizations of reductive monoids).  
%$\P_\epsilon (M)$. 
The outcome (Theorem \ref{ratsm.emb.are.qfilt.thm}) %and Corollary \ref{cycle.mon.cor})
provides an extension of   
\cite[Theorem 7.4]{go:cells}
to equivariant Chow groups. 

%\smallskip

Finally, in Section 6, we study complex spherical varieties. The  
purpose there is to compare the two 
notions of $\Q$-filtrable varieties, the algebraic one (Section 4) 
and the topological one \cite{go:cells}. 
%, in the case of . 
Roughly speaking, the main results  
of that section 
assert that {\em if} $X$ is a spherical $G$-variety 
%having a BB-decomposition 
%into rationally smooth cells,  %(
which is $\Q$-filtrable in the sense of \cite{go:cells},  
%then these cells are also algebraic rational cells; 
%in other words, if $X$ 
then it is also $\Q$-filtrable in the sense of the present paper. %then it is $\Q$-filtrable 
%in the sense of this paper. 
Moreover, %such a spherical variety is 
%$X$ is $\Q$-filtrable, and 
for such (possibly singular) $X$,    
the $T$-equivariant and non-equivariant cycle maps are isomorphisms. 
See Theorems \ref{sphrsm_implies_arc.lem} and \ref{sp_rs_impl_crs} for precise statements. 
%Note that $X$ is in general a singular variety. 

\acknowledge{
%\begin{acknowledgements}
The research in this paper was done 
during my visits to the 
Institute
des Hautes \'Etudes Scientifiques (IHES) and 
the 
Max-Planck-Institut f\"{u}r Mathematik (MPIM).    
I am deeply grateful to both institutions for their support,  
outstanding hospitality, and excellent working conditions. 
A very special thank you goes to Michel Brion 
for 
the productive 
meeting we had at IHES, from which this paper received much inspiration.  
I would also like to thank the support that I received, as a postdoctoral fellow, from 
Sabanc\i\,\"{U}niversitesi 
and 
the Scientific and Technological Research Council of Turkey (T\"{U}B\.ITAK), Project No. 112T233.
%\end{acknowledgements}
}

\section{Preliminaries} 
%\section{Conventions and Notation} 

\convnotation{
Throughout this paper, we work over an algebraically closed 
field $\k$ of arbitrary characteristic (unless stated otherwise).  
All schemes and algebraic groups are assumed to be defined 
over $\k$. By a scheme we mean a separated scheme 
of finite type. %over $k$. 
A variety is a reduced scheme. 
Observe that varieties need not be irreducible. 
A subvariety is a closed subscheme which is a variety. 
%A curve on a scheme is an irreducible one-dimensional subscheme.    
%Unless explicit mention is made to the contrary, 
%we will assume all schemes are equidimensional. 
A point on a scheme will always be a closed point. 

\smallskip

We denote by $T$ an algebraic torus. 
A scheme $X$ provided with an algebraic action of $T$ is called a 
{\em $T$-scheme}. 
For a $T$-scheme $X$,  
we denote by $X^T$ the fixed point subscheme and by $i_T:X^T\to X$
the natural inclusion. %as a closed subscheme.  
If $H$ is a closed subgroup of $T$, we similarly denote by 
$i_H:X^H\to X$ the inclusion of the fixed point subscheme. 
When comparing $X^T$ and $X^H$ we write $i_{T,H}:X^T\to X^H$ 
for the natural ($T$-equivariant) inclusion. 
For a scheme $X$, the dimension of the 
local ring of $X$ at $x$
is denoted  $\dim_x X$. 
We denote by $\Delta$ the character group of $T$, and by 
$S$ the symmetric algebra over $\Q$ of the abelian group 
$\Delta$. We denote by $\mathcal{Q}$ the quotient field of $S$.
Equivariant Chow groups 
%and equivariant operational 
%Chow groups 
are always considered with rational coefficients.

\smallskip

In this paper, torus actions are assumed to be 
{\em locally
linear}, i.e. the schemes we consider are covered by invariant affine open subsets.  
%upon which the action is linear.    
This assumption is fulfilled e.g. 
by $T$-stable subschemes of normal 
$T$-schemes \cite{su:eq}.
%Conventions for $G$-actions?
%
%\smallskip
%
%A $T$-scheme is {\em $T$-quasiprojective} if it 
%has an ample $T$-linearized invertible sheaf. 
%This assumption is satisfied, e.g. for $T$-stable subschemes   
%of normal quasiprojective $T$-schemes \cite{su:eq}. 
}

\subsection{The Bialynicki-Birula decomposition} 
The results in this subsection 
are due to Bialynicki-Birula \cite{bb:torus}, \cite{bb:decomp} (in the smooth case) 
and Konarski \cite{ko:bb} (in the general case). 
For our purposes, 
it suffices to consider the case of torus actions with isolated fixed points. 

Let $T$ be an algebraic torus. Let $X$ be a $T$-scheme 
%and suppose that all the fixed points of the given 
%$T$-action on $X$ are 
with isolated fixed points.  
Then $X^T$ is finite and we
write $X^T=\{x_1,\ldots,x_m\}$. 
Recall that a one-parameter subgroup $\lambda:\G\to T$ is called {\em generic}
if $X^{\G}=X^T$, where $\G$ acts on $X$ via $\lambda$. 
Generic one-parameter subgroups always exist, 
due to local linearity of the action. 
Now fix a generic one-parameter subgroup $\lambda$ of $T$. 
%, 
%and 
%let $X^T=\bigsqcup_{i=1}^m F_i$ be the decomposition of $X^T$ 
%into connected components. 
For each $i$, define the subset 
%{\bf stratum} 
%$W_i(\lambda)$ by %and $X_{-}(Y, \lambda)$ by
$$X_+(x_i,\lambda)=\{x \in X \;|\; 
\lim_{t\to 0}\lambda(t)\cdot x=x_i\}.$$
%We denote by $\pi_i:X_+(x_i,\lambda)\to x_i$ 
%the map $x\mapsto \lim_{t\to 0}\lambda(t)\cdot x$. 
Then $X_+(x_i,\lambda)$ is a locally closed 
$T$-invariant subscheme of $X$.  
%and $\pi_i$ is a $T$-equivariant morphism. 
The (disjoint) union of the 
%locally closed $T$-invariant subchemes 
$X_+(x_i,\lambda)$'s 
%where $\lambda$ is a fixed generic one-parameter subgroup, 
might not cover all of $X$, 
but when 
it does  
%the $W_i(\lambda)$ cover all of $X$     
(e.g., when $X$ is complete), 
%then 
%$X$  
%In that case,  
the decomposition $\{X_+(x_i,\lambda)\}_{i=1}^m$ %$W_i(\lambda)$ 
%(which depends on the choice of generic 
%one-parameter subgroup $\lambda$) 
is called 
%
%
%Now consider 
the %associated 
Bialynicki-Birula decomposition, or 
{\em BB-decomposition}, of $X$ associated to $\lambda$. 
Each $X_+(x_i,\lambda)$ is 
%referred to as a {\em stratum} of the decomposition. %$X$. %(associated to $\lambda$).   
%If, moreover, all fixed points 
%of the given $T$-action %of $T$ 
%on $X$ are isolated (i.e. $X^T$ is finite), 
%we simply write $X^T=\{x_1,\ldots, x_m\}$. 
%Let $\lambda$ be a generic one-parameter subgroup of $T$. 
%In this situation, 
%the corresponding $W_i(\lambda)$ are simply 
called %{\bf BB-cells} or,  
%simply, 
a {\em cell} %for short.
of the decomposition. 
%
%\smallskip
%\begin{rem}
Usually the $BB$-decomposition of a 
complete $T$-scheme is not a Whitney stratification; 
that is, it may happen that the closure of a cell  
is not a union of cells,  
%even if we assume our scheme to be smooth. 
even when the scheme is assumed to be smooth. 
For instance, see \cite[Example 1]{bb:decomp}.
%\end{rem}

\begin{dfn}\label{filtable.dfn} 
Let $X$ be a $T$-scheme with finitely many fixed points. 
Let  
$\{X_+(x_i,\lambda)\}_{i=1}^m$ be the BB-decomposition 
associated to some generic one-parameter subgroup $\lambda$ of $T$. 
The decomposition $\{X_+(x_i,\lambda)\}$ is said to be {\em filtrable} if 
%$X$ is called {\bf filtrable} if, for some generic one-parameter subgroup $\lambda$ of $T$,           
%the collection $\{W_i(\lambda)\}$ is a BB-decomposition of $X$, 
%and  
there exists a finite increasing sequence 
$\Sigma_0\subset \Sigma_1\subset \ldots \subset \Sigma_m$
of $T$-invariant closed subschemes of $X$ such that:

\smallskip

\noindent a) $\Sigma_0=\emptyset$, $\Sigma_m=X$,

\smallskip

\noindent b) $\Sigma_{j}\setminus \Sigma_{j-1}$ 
is a cell of the decomposition $\{X_+(x_i,\lambda)\}$, 
for each $j=1,\ldots, m$. 
\end{dfn}  
We will refer to $\Sigma_j$ as the {\em $j$-th filtered piece} of $X$. 
In this context, it is common to say that $X$ is {\em filtrable}. 
If, moreover, %$X^T$ is finite and 
the cells $X_+(x_i,\lambda)$ are 
isomorphic to affine spaces $\A^{n_i}$,  
then 
$X$ is called {\em $T$-cellular}. 
%By assumption, there is a closed 
%stratum $X_1=W_{i_0}(\lambda)$ and $X\setminus X_1$ is filtrable (though it might not be projective). 
%
%\smallskip
%  
%The following result is due to Bialynicki-Birula (\cite{bb:torus}, \cite{bb:decomp}).  
%in the smooth case and Konarski \cite{ko:bb} in the general case.  
%We include the proof here for the reader's convenience.

\begin{thm}[\cite{bb:torus}, \cite{bb:decomp}]\label{bbdecomp.thm}
Let $X$ be a complete $T$-scheme with isolated fixed points,  
and let $\lambda$ be a generic one-parameter subgroup. %with isolated fixed points.  
If $X$ admits an ample $T$-linearized invertible sheaf, 
then the associated BB-decomposition $\{X_+(x_i,\lambda)\}$ 
is filtrable.
Further, if $X$ is smooth,  
then $X$ is $T$-cellular. \hfill $\square$
%then $X^T$ is also smooth, and for any component $F_i$ of $X^T$,  %corresponding 
%the map $\pi_i:W_i(\lambda)\to F_i$ 
%makes $W_i(\lambda)$ into a $T$-equivariant vector bundle over $F_i$. 
\end{thm}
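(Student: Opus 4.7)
The plan is to construct the filtration by embedding $X$ equivariantly in a projective space and pulling back a natural linear flag. After replacing the ample $T$-linearized sheaf by a high tensor power, one obtains a $T$-equivariant closed immersion $X\hookrightarrow\P(V)$ for some finite-dimensional $T$-module $V$. Decompose $V=\bigoplus_\chi V_\chi$ into $T$-weight spaces and group them by $\lambda$-value: let $m_1>m_2>\cdots>m_s$ be the distinct values of $\langle\lambda,\chi\rangle$, and set $U_j:=\bigoplus_{\langle\lambda,\chi\rangle=m_j}V_\chi$ and $W_j:=U_1\oplus\cdots\oplus U_j$. A direct computation shows $\lim_{t\to 0}\lambda(t)[v]=[v_{U_j}]$ for $j$ the largest index with $v_{U_j}\neq 0$, so the BB-cell of $\P(V)$ at a fixed point $y=[v_y]\in \P(U_j)$ is exactly $\P(W_{j-1}+\langle v_y\rangle)\setminus\P(W_{j-1})$. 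Since $\lambda$ is generic and $X$ is complete, every $\lambda$-limit of a point of $X$ lies in $X^T$, which forces $X\cap\P(U_j)=\{x_{j,1},\ldots,x_{j,k_j}\}$ to be a finite (possibly empty) set of fixed points.

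The filtration is built layer by layer and, within each layer, fixed point by fixed point. Pick a $T$-weight representative $v_{j,a}\in V$ of each $x_{j,a}$ and define
\[
\Sigma_{j,a}:=X\cap\Bigl(\P(W_{j-1})\cup\bigcup_{b\leq a}\P(W_{j-1}+\langle v_{j,b}\rangle)\Bigr),\qquad \Sigma_{j,0}:=X\cap\P(W_{j-1}).
\]
Each $\Sigma_{j,a}$ is closed and $T$-invariant. Since the $v_{j,b}$ give pairwise non-proportional vectors in $U_j$, a direct linear algebra computation shows
\[
\P(W_{j-1}+\langle v_{j,a}\rangle)\setminus\bigcup_{b<a}\P(W_{j-1}+\langle v_{j,b}\rangle)=\P(W_{j-1}+\langle v_{j,a}\rangle)\setminus\P(W_{j-1}),
\]
so intersecting with $X$ gives $\Sigma_{j,a}\setminus\Sigma_{j,a-1}=X_+(x_{j,a},\lambda)$. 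The same completeness-plus-genericity argument yields $\Sigma_{j,k_j}=X\cap\P(W_j)$: any $[v]\in X$ with $v\in W_j\setminus W_{j-1}$ has $\lambda$-limit in $X\cap\P(U_j)=\{x_{j,a}\}$, forcing $v_{U_j}$ to be a scalar multiple of some $v_{j,a}$. Concatenating the $\Sigma_{j,a}$ over $j=1,\ldots,s$ and $a=1,\ldots,k_j$ produces the desired filtration of $X$.

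For the $T$-cellular statement in the smooth case, one argues locally at each fixed point. By Sumihiro's theorem, $x_i$ admits a $T$-stable affine neighborhood which embeds equivariantly into the tangent representation $T_{x_i}X$; genericity of $\lambda$ and smoothness of $x_i$ give $T_{x_i}X=(T_{x_i}X)_+\oplus(T_{x_i}X)_-$ with no zero-weight part, and a now-standard argument of Bia{\l}ynicki-Birula identifies $X_+(x_i,\lambda)$ $T$-equivariantly with the affine space $(T_{x_i}X)_+\cong\A^{n_i}$. The main technical hurdle is the middle step---tracking the linear algebra in $\P(V)$ so that each layer $X\cap(\P(W_j)\setminus\P(W_{j-1}))$ decomposes cleanly into the cells at the enumerated fixed points of $X$ in $\P(U_j)$, with no stray contributions; once the equivariant projective embedding is fixed and this combinatorics is sorted out, both conclusions follow.
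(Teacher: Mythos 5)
This theorem is quoted by the paper from \cite{bb:torus}, \cite{bb:decomp} (and Konarski) without proof, and your argument is a correct reconstruction of the standard one from those sources: pass to a $T$-equivariant embedding $X\hookrightarrow\P(V)$, filter $V$ by the $\lambda$-weight layers $W_1\subset\cdots\subset W_s$, and refine each $X\cap\P(W_j)$ by the finitely many fixed points lying in $\P(U_j)$, using genericity and completeness to see that every point of the layer limits onto one of them. The one step you leave to citation --- the identification of the cells of a smooth $X$ with the affine spaces $(T_{x_i}X)_+$ --- is precisely the content of \cite{bb:torus} and is likewise taken as given in the paper.
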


%In other words, $T$-stable projective subschemes of 
%$\P^N$ (where $T$-acts linearly) 
%are %always 
%filtrable.

\subsection{Review of equivariant Chow groups. 
Localization theorem} 
Let $X$ be a $T$-scheme of dimension $n$ (not 
necessarily equidimensional). %, and let $d=\dim T$.    
Let $V$ be a finite dimensional $T$-module, %of dimension $l$, 
and let $U\subset V$ be an invariant open 
subset such that a principal bundle quotient 
$U\to U/T$ exists. %, and $V\setminus U$ has 
%codimension larger than $n-i$.  
Then $T$ acts freely on $X\times U$ 
%via the 
%diagonal action 
and the quotient scheme 
$X_T:=(X\times U)/T$ exists. 
Following Edidin and Graham \cite{eg:eqint}, 
we define the 
$i$-th {\em equivariant Chow group} 
$A_i^T(X)$ by $A_i^T(X):=A_{i+\dim U-\dim T}(X),$
if $V\setminus U$ has codimension more than 
$n-i$. Such a 
pair $(V,U)$ always exist, and  
the definition is independent of the choice of $(V,U)$, see       
\cite{eg:eqint}. Finally, set $A^T_*(X)=\oplus_i A^T_*(X)$. 
If $X$ is a $T$-scheme, 
and $Y\subset X$ is a $T$-stable closed subscheme, 
then %the structure sheaf of 
$Y$ defines a class $[Y]$ 
in $A^T_*(X)$. If $X$ is smooth, then 
so is $X_T$, 
and $A^T_*(X)$ admits an intersection pairing;  
in this case, denote by $A^*_T(X)$ 
the corresponding ring 
graded by codimension. The %(rational) 
equivariant Chow ring $A^*_T(pt)$ identifies to $S$, 
and   
$A^T_*(X)$ is a $S$-module, where $\Delta$ acts on $A^T_*(X)$ 
by homogeneous maps of degree $-1$.   
This module structure 
is induced by pullback through the 
flat map $p_{X,T}:X_T\to U/G$.   
Restriction to a fiber of $p_{X,T}$ 
gives $i^*:A^T_*(X)\to A_*(X)$. 
If $X$ is complete, we denote by 
$\int_X(\alpha) \in S$ 
the proper pushforward to a point of a 
class $\alpha \in A^T_*(X)$. See \cite{eg:eqint} for details. 

\smallskip

Next we state %here 
Brion's description \cite{bri:eqchow} 
of the equivariant Chow groups 
%of $T$-schemes 
%with a torus action   
in terms of  
invariant cycles. 
It also shows how to recover the 
usual Chow groups from equivariant ones.

\begin{thm}\label{Tequiv.thm}
Let $X$ be a $T$-scheme. Then the $S$-module $A^T_*(X)$ is defined by generators $[Y]$ where 
$Y$ is an invariant irreducible subvariety of $X$ and relations $[{\rm div}_Y(f)]-\chi[Y]$ where $f$ is a 
rational function on $Y$ which is an eigenvector of $T$ of weight $\chi$. Moreover, the 
map $A^T_*(X)\to A_*(X)$ vanishes on $\Delta A^T_*(X)$, and it induces 
an isomorphism 
$$
A^T_*(X)/\Delta A^T_*(X)\to A_*(X). 
$$\hfill $\square$
\end{thm}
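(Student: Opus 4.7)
The plan is to fix a geometric model $X_T = (X \times U)/T$ where $U \subset V$ is a $T$-invariant open subset of a $T$-representation with free $T$-action and complement of codimension greater than $n - i$, so that $A^T_i(X) = A_{i + \dim U - \dim T}(X_T)$. The strategy is then to translate the usual presentation of the ordinary Chow group $A_*(X_T)$, by subvarieties modulo divisors of rational functions, into statements about $T$-invariant subvarieties of $X$. Since $T$ acts freely on $X \times U$, subvarieties $Z \subset X_T$ correspond bijectively to $T$-invariant subvarieties $\tilde{Z} \subset X \times U$; projecting to $X$ gives a $T$-invariant subvariety $Y \subset X$. I would first show that $[\tilde Z]$ is equivalent, modulo the eigenvector relations, to a multiple of the lift of $[Y]$, using the $\G$-scaling action on $V$ to produce a rational equivalence inside $Y \times V$ that degenerates the generic $V$-fiber of $\tilde Z$ into all of $U$. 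This produces the classes $[Y]$, for $T$-invariant $Y \subset X$, as $S$-module generators of $A^T_*(X)$.

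For the relations, given such a $Y$ and a rational $T$-eigenvector $f$ on $Y$ of weight $\chi$, choose (after enlarging $V$ if necessary) a linear form $\ell \in V^*$ of weight $-\chi$. Then $f \otimes \ell$ is a $T$-invariant rational function on $Y \times V$, and its divisor decomposes as ${\rm div}_Y(f) \times V + Y \times {\rm div}_V(\ell)$. Restricting to $Y \times U$ and descending to $Y_T \subset X_T$ yields the relation
$$
[{\rm div}_Y(f)] + \chi \cdot [Y] = 0 \quad \text{in } A^T_*(X),
$$
since $\chi \in \Delta \subset S$ is represented under the $S$-module structure by the class, pulled back from $U/T$, of the hyperplane $\{\ell = 0\}$. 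Conversely, a $T$-invariant rational function on any $\tilde Z \subset X \times U$ decomposes via its Laurent expansion into eigenvector contributions from the $X$ and $V$ factors, which recover only relations of the stated form together with trivial relations on $V$.

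The second assertion follows with less effort. The map $A^T_*(X) \to A_*(X)$ is pullback along the fiber inclusion $X \hookrightarrow X_T$, and elements of $\Delta$ restrict to zero on a fiber since they are pulled back from the base $U/T$; hence the map factors through $A^T_*(X)/\Delta A^T_*(X)$. Surjectivity follows from a Leray-Hirsch argument for the bundle $X_T \to U/T$, choosing $U/T$ to be a product of projective spaces with its standard cell decomposition. Injectivity modulo $\Delta$ follows from the presentation in the first half: modulo $\Delta \cdot A^T_*(X)$, the eigenvector relations $[{\rm div}_Y(f)] = \chi \cdot [Y]$ reduce to $[{\rm div}_Y(f)] = 0$, which (together with surjectivity and the fact that every cycle on $X$ is rationally equivalent to a $T$-invariant one, $T$ being connected) recovers exactly the defining relations of $A_*(X)$. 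The main obstacle is the generation step: producing the rational equivalence that reduces an arbitrary $T$-invariant $\tilde Z \subset X \times U$ to a product form $Y \times U$ using only the permitted eigenvector relations. This demands an induction on the codimension of $\tilde Z$ in $Y \times U$, with careful bookkeeping to ensure that the intermediate equivalences remain in the subgroup generated by eigenvector divisors, rather than arbitrary rational equivalences on $X_T$.
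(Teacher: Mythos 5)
First, a point of comparison: the paper does not prove this statement at all. It is quoted verbatim from Brion (\cite{bri:eqchow}, Theorem 2.1) --- note the sentence introducing it and the terminal square --- so there is no in-paper argument to match yours against. That said, your framework is the right one: working in the Edidin--Graham model $X_T=(X\times U)/T$, identifying subvarieties of $X_T$ with $T$-invariant subvarieties of $X\times U$, and verifying that the relations hold by descending $\operatorname{div}(f\otimes\ell)$ for a linear form $\ell\in V^*$ of weight $-\chi$, which gives $[\operatorname{div}_Y(f)]-\chi[Y]=0$ in $A_*(X_T)$ (up to the sign convention for the $S$-action). That computation is correct and is essentially Brion's.

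The rest has genuine gaps, and they sit exactly where the content of the theorem lies. (a) Generation: the claim that $[\tilde Z]$ reduces to ``a multiple of the lift of $[Y]$'' is false as stated; it is an $S$-linear combination of classes $[Y'_T]$ over invariant subvarieties $Y'\subseteq Y$, and isolating the top term requires a noetherian induction over invariant closed subsets of $Y$, not an induction on codimension in $Y\times U$. Moreover the scaling degeneration does not produce $Y\times U$: the flat limit of $s\cdot\tilde Z$ as $s\to 0$ is supported on $Y\times\{0\}$ (wrong dimension unless $\tilde Z=Y\times V$ already), and the limit as $s\to\infty$ only yields a $T\times\G$-invariant cone, not a product. (b) Completeness of relations: the relations in $A_*(X_T)$ come from arbitrary rational functions on arbitrary subvarieties of $X_T$, i.e.\ from $T$-invariant rational functions on invariant $\tilde Z$ that need not have product form; the proposed ``Laurent expansion into contributions from the $X$ and $V$ factors'' does not exist for such $\tilde Z$ (even on $Y\times U$ itself an invariant rational function is a ratio of sums of products $f_i\otimes m_i$, not a single product). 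You flag this as ``the main obstacle,'' but it is precisely the theorem, and no mechanism is offered to resolve it. (c) The injectivity of $A^T_*(X)/\Delta A^T_*(X)\to A_*(X)$ is argued circularly: asserting that the reduced relations ``recover exactly the defining relations of $A_*(X)$'' presupposes that $A_*(X)$ is presented by $T$-invariant cycles modulo divisors of $T$-eigenfunctions on invariant subvarieties --- that is the nontrivial theorem of Fulton--MacPherson--Sottile--Sturmfels \cite{f:sph} for connected solvable groups, which is the input Brion actually invokes at this step and which your sketch neither proves nor cites.
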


%Now let $\Gamma$ be a connected solvable 
%linear algebraic group with maximal torus $T$. 
%If $X$ is a $\Gamma$-scheme, 
%then the generators of $A^T_*(X)$ 
%in Theorem \ref{Tequiv.thm} can be 
%taken to be $\Gamma$-invariant 
%(\cite[Proposition 6.1]{bri:eqchow}). 
%In particular, if $X$ has finitely many $\Gamma$-orbits, 
%then the $S$-module $A^T_*(X)$ is 
%{\em finitely generated} by 
%the classes of the closures of the these orbits. 
%This holds e.g. when $X$ is a spherical variety. 
%, i.e 
%\smallskip

The following is a slightly more 
general version of 
the localization theorem for equivariant Chow groups \cite[Corollary 2.3.2]{bri:eqchow}. 
For a proof, see e.g. \cite[Proposition 2.15]{go:tlin}. 

\begin{thm}\label{loc.chow.thm}
Let $X$ be a $T$-scheme, let $H\subset T$ be a closed subgroup, and 
let  % closed subgroup of  
$i_H:X^H\to X$ be the inclusion of the fixed point subscheme.
Then the induced morphism of equivariant Chow groups 
$$i_{H*}:A^T_*(X^H)\to A^T_*(X)$$
becomes an isomorphism after inverting finitely many characters of $T$
that restrict non-trivially to $H$.  \hfill $\square$ 
\end{thm}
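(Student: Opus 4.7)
The plan is to follow Brion's proof of the localization theorem \cite[Corollary 2.3.2]{bri:eqchow} and adapt it to an arbitrary closed subgroup $H\subseteq T$. Denote by $N\subset S$ the multiplicative subset generated by the finitely many characters of $T$ produced in the argument below, each of which will restrict non-trivially to $H$. The goal is to show that $i_{H*}$ becomes an isomorphism of $S_N$-modules after tensoring with $S_N$. Throughout, the cycle presentation of $A^T_*(-)$ from Theorem \ref{Tequiv.thm} does the heavy lifting.

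For surjectivity, the closed/open decomposition $X^H\hookrightarrow X\hookleftarrow U:=X\setminus X^H$ yields the right-exact sequence
$$A^T_*(X^H)\xrightarrow{\,i_{H*}\,}A^T_*(X)\longrightarrow A^T_*(U)\longrightarrow 0,$$
so it suffices to show that $A^T_*(U)\otimes_S S_N=0$. By Theorem \ref{Tequiv.thm}, $A^T_*(U)$ is generated as an $S$-module by classes $[Y]$ with $Y\subseteq U$ a $T$-invariant irreducible subvariety. Noetherian induction on $\dim Y$ and on the support of cycles reduces everything to producing, for each such $Y$, a character $\chi\in N$ and a rational $T$-eigenfunction $f$ on $Y$ of weight $\chi$. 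Let $T_Y\subseteq T$ be the generic stabilizer of the $T$-action on $Y$; the condition $Y\subseteq U$ forces $H\not\subseteq T_Y$, so one can pick $\chi\in\Delta$ trivial on $T_Y$ but non-trivial on $H$. Since $\chi$ vanishes on the generic stabilizer, it is the weight of a rational $T$-eigenfunction $f$ on $Y$ (obtained by viewing a character of $T/T_Y$ as a regular function on a generic orbit $Tx\cong T/T_Y$ and extending). The defining relation $[\mathrm{div}_Y(f)]=\chi[Y]$ then rewrites $\chi[Y]$ as a class of strictly smaller support in $Y$, closing the induction.

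Injectivity is the main obstacle, because the localization sequence for Chow groups is only right-exact, so the kernel of $i_{H*}$ must be controlled by hand. The plan is to argue at the level of cycles: any $\alpha\in A^T_*(X^H)$ with $i_{H*}(\alpha)=0$ is witnessed by a finite list of defining relations $[\mathrm{div}_{Z_k}(f_k)]=\chi_k[Z_k]$ with $Z_k\subseteq X$ a $T$-invariant subvariety and $f_k$ a $T$-eigen-rational-function on $Z_k$ of weight $\chi_k$. For each $Z_k\not\subseteq X^H$, the surjectivity argument is applied \emph{inside} $Z_k$: after inverting some $\chi'_k\in N$, the class $[Z_k]$ is rewritten as the pushforward of a class supported on $Z_k\cap X^H$. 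Substituting through the defining relations and collecting the finitely many characters of $N$ introduced along the way rewrites $\alpha$ as a combination of relations internal to $X^H$, so that $\alpha$ already vanishes in $A^T_*(X^H)\otimes_S S_N$.

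The delicate step, and where the real work lies, is controlling the behaviour of the divisors $\mathrm{div}_{Z_k}(f_k)$ under these successive substitutions, since the components of such a divisor that fail to lie in $X^H$ must be recursively eliminated without spoiling rational equivalence. This bookkeeping parallels and mildly extends the analysis in \S2 of \cite{bri:eqchow}; a complete write-up for general closed subgroups $H$ appears as \cite[Proposition 2.15]{go:tlin}, which is the reference cited in the statement.
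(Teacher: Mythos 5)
The paper does not actually prove this statement: it is quoted as a known result, with the proof deferred to \cite[Corollary 2.3.2]{bri:eqchow} and \cite[Proposition 2.15]{go:tlin}, so there is no in-paper argument to compare yours against. Judged on its own merits, your surjectivity half is the standard and essentially correct d\'evissage: kill $A^T_*(U)$ after localization by descending induction on dimension, using for each invariant irreducible $Y\subseteq U$ a character trivial on the generic stabilizer $T_Y$ and nontrivial on $H$ (which exists because $H\not\subseteq T_Y$ and restriction of characters from the torus $T/T_Y$ to the nontrivial diagonalizable image of $H$ is surjective). Two repairs are needed there. First, the fact that every character of $T$ trivial on $T_Y$ is the weight of a rational eigenfunction on $Y$ follows from $Y$ having a dense invariant affine open embedded in a $T$-module (local linearity), not from extending a function off a single generic orbit: that orbit need not be dense in $Y$. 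Second, the asserted finiteness of the set $N$ of inverted characters does not come for free from the induction, since there are infinitely many subvarieties $Y$; you must observe that only finitely many subgroups of $T$ occur as generic stabilizers (again via the finitely many invariant affine charts sitting in $T$-modules) and choose one character per occurring stabilizer.

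The injectivity half is where the proposal has a genuine gap. The localization sequence is only right exact, so, as you note, the kernel must be controlled through the presentation of Theorem \ref{Tequiv.thm}. But the ``substitution and bookkeeping'' you describe is not yet an argument: after rewriting $\chi'_k[Z_k]$ as a class supported on $Z_k\cap X^H$, the identity witnessing $i_{H*}(\alpha)=0$ becomes an identity among generators $[W]$ with $W\subseteq X^H$, and the entire content of injectivity is to show that this identity is a consequence of relations $[\mathrm{div}_W(g)]-\psi[W]$ with $W\subseteq X^H$ only --- i.e., that $\alpha$ already vanishes in the localization of $A^T_*(X^H)$ and not merely in that of $A^T_*(X)$. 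Your sketch supplies no mechanism for this: the divisors $\mathrm{div}_{Z_k}(f_k)$ have components both inside and outside $X^H$, eliminating the outside components perturbs the inside ones, and you explicitly defer ``the real work'' to \cite[Proposition 2.15]{go:tlin}, which is the very reference the theorem cites. As written, the proposal proves surjectivity after localization but reduces injectivity to its published proof rather than establishing it.
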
 

%For later use we record here the following 
%elementary result. 

Let $X$ be a $T$-scheme. In many situations, 
Theorems \ref{Tequiv.thm} and \ref{loc.chow.thm} combined 
yield a relation between the dimensions of the $\Q$-vector spaces 
$A_*(X)$ and $A_*(X^T)$. 

\begin{lem}\label{dim.fix.set.lem}
Let $X$ be a $T$-scheme. %Assume that 
%the $S$-module $A^T_*(X)$ is finitely generated.
If $A_*(X)$ is a finite-dimensional $\Q$-vector space, 
then the inequality $\dim_\Q A_*(X^T)\leq \dim_\Q A_*(X)$ holds.  
Furthermore, $\dim_\Q A_*(X^T)=\dim_\Q A_*(X)$ if and only if 
the $S$-module $A^T_*(X)$ is free. 
\end{lem}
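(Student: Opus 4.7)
The plan is to combine Brion's presentation (Theorem \ref{Tequiv.thm}), the localization theorem (Theorem \ref{loc.chow.thm}), and graded Nakayama's lemma, applied to the $S$-module $M := A^T_*(X)$. Theorem \ref{Tequiv.thm} identifies $M/\Delta M$ with $A_*(X)$, which by hypothesis has finite $\Q$-dimension $r := \dim_\Q A_*(X)$. Now $M$ is naturally graded by dimension, concentrated in degrees $i \leq \dim X$, and multiplication by $\Delta$ lowers this degree by one; after the reindexing $j = \dim X - i$ it becomes a bounded-below graded $S$-module on which $\Delta$ raises degree. Graded Nakayama then allows any homogeneous lift of a $\Q$-basis of $M/\Delta M$ to generate $M$, producing a surjection $\pi \colon F := S^{\oplus r} \twoheadrightarrow M$ of graded $S$-modules.

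Next I would use localization to compute the generic rank of $M$. Since $T$ acts trivially on $X^T$, one has $A^T_*(X^T) \cong A_*(X^T) \otimes_\Q S$, so after tensoring with the fraction field $\mathcal{Q}$ of $S$,
\[
\dim_\mathcal{Q}\bigl(A^T_*(X^T) \otimes_S \mathcal{Q}\bigr) = \dim_\Q A_*(X^T).
\]
Theorem \ref{loc.chow.thm} yields $M \otimes_S \mathcal{Q} \cong A^T_*(X^T) \otimes_S \mathcal{Q}$, so the left-hand side equals $\dim_\mathcal{Q}(M \otimes_S \mathcal{Q})$. Tensoring $\pi$ with $\mathcal{Q}$ then gives a surjection $\mathcal{Q}^r \twoheadrightarrow M \otimes_S \mathcal{Q}$, and hence the inequality $\dim_\Q A_*(X^T) \leq r$.

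For the freeness criterion, set $K := \ker \pi$; flat base change yields the short exact sequence
\[
0 \to K \otimes_S \mathcal{Q} \to \mathcal{Q}^r \to M \otimes_S \mathcal{Q} \to 0.
\]
If equality holds in the inequality above, then $K \otimes_S \mathcal{Q} = 0$, i.e., $K$ is $S$-torsion; but $K \subset F$ is a submodule of a free module over the domain $S$, hence torsion-free, forcing $K = 0$ and $M \cong F$. Conversely, if $M$ is free then, being a finitely generated graded module over the polynomial ring $S$, it is graded-free of some rank $s$; both $\dim_\Q M/\Delta M$ and $\dim_\mathcal{Q}(M \otimes_S \mathcal{Q})$ then equal $s$, giving equality.

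The main obstacle to be handled carefully is the applicability of graded Nakayama: the groups $A^T_i(X)$ can be nonzero for arbitrarily negative $i$ (as already for a point, where $A^T_*(\mathrm{pt}) \cong S$), so one must exploit the vanishing in high dimensional degrees together with the direction $\Delta \cdot A^T_i(X) \subseteq A^T_{i-1}(X)$, after reindexing, to put $M$ into the standard non-negatively graded, bounded-below form required by the graded Nakayama lemma. Once finite generation of $M$ is in hand, the remainder of the argument is formal.
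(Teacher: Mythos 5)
Your proposal is correct and follows essentially the same route as the paper: finite generation of $A^T_*(X)$ via graded Nakayama (using that the dimension grading is bounded above by $\dim X$), the identification of $\dim_\Q A_*(X^T)$ with the generic rank $\dim_{\mathcal{Q}}(A^T_*(X)\otimes_S\mathcal{Q})$ via localization, and the observation that equality of the number of generators with the generic rank forces freeness over the domain $S$. Your kernel-is-torsion-yet-torsion-free argument is just a rephrasing of the paper's ``the lifted generators become linearly independent over $\mathcal{Q}$, hence over $S$'' step in its Lemma on freeness.
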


\begin{proof} 
The degrees in $A^T_*(X)$ 
are at most the dimension of $X$, 
so by the graded Nakayama lemma \cite[Exercise 4.6]{ei:comm}, 
the $S$-module $A^T_*(X)$ is finitely generated. 
The content of the corollary is now deduced from  
applying 
Lemma \ref{freeness.lem} and Remark \ref{ineq.rem} below  
to $M=A^T_*(X)$, taking into 
account that   
$\dim_{S/\mathfrak{m}}(M/\mathfrak{m}M)=\dim_\Q(A_*(X))$ 
(Theorem \ref{Tequiv.thm}), 
$\dim_{\mathcal{Q}}(M\otimes_S \mathcal{Q})=
\dim_{\mathcal{Q}}(A^T_*(X^T)\otimes \mathcal{Q})$ 
(Theorem \ref{loc.chow.thm}), and observing that 
this corresponds 
to $\dim_\Q A_*(X^T)$, since       
$A^T_*(X^T)=A_*(X^T)\otimes_\Q S$.  
\end{proof}

%\smallskip 

\begin{lem}\label{freeness.lem}
Let $S$ be a Noetherian positively graded ring 
such that $S_0$ is a field (e.g. $S=A^*_T(pt)$).  
%and 
%denote by 
Let $\mathfrak{m}$ be the unique graded maximal ideal and suppose 
$M$ is a non-zero finitely generated, graded, $S$-module. 
Suppose further that $S$ is an integral domain. Then   
$M$ is a free 
$S$-module  
if and only if
$$
\dim_{S/\mathfrak{m}}{(M/\mathfrak{m}M)}=\dim_{\mathcal{Q}}(M\otimes_{S}\mathcal{Q}), 
$$   
where $\mathcal{Q}$ is the quotient field of $S$. 
\end{lem}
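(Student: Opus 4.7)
The plan is to reduce the statement to a minimal generating set argument via the graded Nakayama lemma, then use flatness of $\mathcal{Q}$ over $S$ together with the fact that $S$ is a domain.

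First I would handle the easy direction. If $M$ is a free graded $S$-module, then, being finitely generated, $M\cong \bigoplus_{i=1}^n S(d_i)$ for some shifts $d_i$. Reducing modulo $\mathfrak{m}$ gives a vector space of dimension $n$ over $S/\mathfrak{m}$, while tensoring with $\mathcal{Q}$ gives $\mathcal{Q}^n$. Both invariants therefore equal $n$, and the equality holds.

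For the harder direction, let $n=\dim_{S/\mathfrak{m}}(M/\mathfrak{m}M)$ and choose homogeneous elements $m_1,\dots,m_n\in M$ whose images form a basis of $M/\mathfrak{m}M$. By the graded Nakayama lemma, these elements generate $M$, so we get a graded surjection $\varphi:F\twoheadrightarrow M$ with $F=\bigoplus_{i=1}^n S(d_i)$. Let $K=\ker\varphi$, which is graded and, by Noetherianness of $S$, finitely generated. Because $\mathcal{Q}$ is flat over $S$, tensoring the short exact sequence $0\to K\to F\to M\to 0$ with $\mathcal{Q}$ yields
\[
0\to K\otimes_S\mathcal{Q}\to \mathcal{Q}^n\to M\otimes_S\mathcal{Q}\to 0.
\]
The hypothesis gives $\dim_{\mathcal{Q}}(M\otimes_S\mathcal{Q})=n$, so $K\otimes_S\mathcal{Q}=0$; that is, $K$ is a torsion $S$-module.

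The key step is the following observation, which is where the hypothesis that $S$ is an integral domain enters: since $K$ is a submodule of the free module $F$ and $S$ is a domain, $K$ is torsion-free. Combined with the previous paragraph this forces $K=0$, so $\varphi$ is an isomorphism and $M$ is free. The main (and only) potential obstacle is making sure that the torsion/torsion-free dichotomy is applied correctly; no serious computation is involved once the setup is in place.
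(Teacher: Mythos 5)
Your proof is correct and is essentially the paper's argument in slightly different clothing: both directions hinge on taking $n$ homogeneous generators via graded Nakayama and then using the dimension count over $\mathcal{Q}$ together with the domain hypothesis to rule out relations. The paper phrases the second step as direct linear independence of the $x_j\otimes 1$ (hence of the $x_j$ over $S$), while you pass through the kernel $K$ of the surjection $F\twoheadrightarrow M$ and observe it is simultaneously torsion and torsion-free; these are the same idea.
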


\begin{proof}
If $M$ is free, then clearly the equation above holds. 
Conversely, denote by $n$ the common value of the two sides 
of the equation above. By the graded Nakayama lemma, $M$ 
has a system $\{x_1,\ldots, x_n\}$ of homogeneous generators. 
Now the elements $x_j\otimes 1$ generate the vector space 
$M\otimes_{S}\mathcal{Q}$ over $\mathcal{Q}$. But as by 
hypothesis this space is of dimension $n$ over $\mathcal{Q}$, the 
elements $x_j\otimes 1$ %, $j=1,\ldots,n$, 
are linearly independent over $\mathcal{Q}$. 
It follows that the $x_j$ are linearly independent over $S$ and so 
they form a basis of $M$.  
\end{proof}

\begin{rem}\label{ineq.rem}
The proof of Lemma \ref{freeness.lem} shows that 
that if $M$ is a finitely generated, graded, $S$-module, 
then the inequality  
$$
\dim_{S/\mathfrak{m}}{(M/\mathfrak{m}M)}\geq \dim_{\mathcal{Q}}(M\otimes_{S}\mathcal{Q}) 
$$ 
holds, as 
we can refine the 
generating set $\{x_j\otimes 1\}$ to get a basis of 
$M\otimes \mathcal{Q}$. 
\end{rem}

%\smallskip

An important class of schemes to which 
Lemma \ref{dim.fix.set.lem} applies   
is the class of $T$-linear schemes. 
These are the equivariant analogues  
of the linear schemes considered by 
\cite{jann:kth},
\cite{to:linear}, 
and   
\cite{jo:linear}. 
$T$-linear schemes have been studied in 
\cite{f:sph},  \cite{jk:chow}, \cite{ap:opk} 
and \cite{go:tlin}. 
Briefly, a $T$-linear scheme is a $T$-scheme %with a $T$-action 
that can be obtained by an 
inductive procedure starting with a finite 
dimensional $T$-representation, 
in such a way that the complement of a 
$T$-linear scheme equivariantly 
embedded in affine space is also a $T$-linear scheme, 
and any $T$-scheme which
can be stratified as a finite disjoint union of $T$-linear schemes  
is a $T$-linear scheme. See 
\cite{jk:chow} or \cite{go:tlin} 
for details. 
It is known 
that if $X$ is a $T$-linear scheme,  
then $A^T_*(X)_\Z$ is a finitely generated 
$S_\Z$-module, and $A_*(X)_\Z$ 
is a finitely generated abelian group 
(see e.g. \cite[Lemma 2.7]{go:tlin}).    
Below are the concrete examples we are interested in.   
For a proof of items (i)-(ii)  
see \cite[Proposition 3.6]{jk:chow}, 
for item (iii) see \cite[Theorem 2.5]{go:tlin}. 

\begin{thm}
Let $T$ be an algebraic torus. 
%and let $T'$ be a quotient of $T$. 
%Let $T$ act on $T'$ via the quotient map. 
Then the following hold: 
\begin{enumerate}[(i)]
% \item $T'$ is $T$-linear. 
 \item A $T$-cellular scheme is $T$-linear.  
 \item Every $T$-scheme with finitely many $T$-orbits is $T$-linear. 
In particular, a toric variety with dense torus $T$ is $T$-linear. 
\item %Assume ${\rm char}(\k)=0$. 
Let $B$ be a connected solvable linear algebraic group with maximal torus $T$.
Let $X$ be a $B$-scheme. If $B$ acts on $X$ with finitely 
many orbits, then $X$ is $T$-linear. In particular, 
spherical varieties are $T$-linear. \hfill $\square$
\end{enumerate} 
\end{thm}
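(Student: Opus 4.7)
The plan is to verify each part by exhibiting a finite stratification of the scheme into pieces whose $T$-linearity is either immediate or follows by induction, invoking the three closure properties of the inductive definition: finite-dimensional $T$-modules are $T$-linear, complements of $T$-linear subschemes in affine space are $T$-linear, and finite disjoint unions of $T$-linear schemes are $T$-linear.

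For part (i), the $T$-cellular hypothesis supplies a filtration $\Sigma_0 \subset \cdots \subset \Sigma_m = X$ whose successive differences are equivariantly isomorphic to affine spaces $\A^{n_j}$ with linear $T$-action; recall that the BB-cells at attractive fixed points of a smooth variety are equivariantly isomorphic to the tangent $T$-module at the fixed point. Each such cell is thus a $T$-representation, hence $T$-linear, and the stratification axiom yields that $X$ is $T$-linear.

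For part (ii), $X$ decomposes as a finite disjoint union of its $T$-orbits, so by the stratification axiom it suffices to show that every orbit $T/H$ is $T$-linear. Passing to the connected component, the question reduces to showing that a quotient torus $T' \cong \G^d$ is $T$-linear, which I would establish by induction on $d$: embed $T'$ linearly as the open complement of the coordinate cross in $\A^d$, and stratify the cross as a finite disjoint union of locally closed subsets $U_S = \{x_i = 0 \text{ iff } i \in S\}$ with $\emptyset \neq S \subseteq \{1,\ldots,d\}$. Each $U_S$ is a quotient torus of dimension strictly less than $d$ and hence $T$-linear by the inductive hypothesis. The complement-in-affine-space axiom then gives that $T'$ itself is $T$-linear. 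The toric variety statement is immediate, since a toric variety with dense torus $T$ has only finitely many $T$-orbits.

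Part (iii) is the main obstacle. I would first filter $X$ by closed $B$-invariant subschemes whose successive differences are single $B$-orbits, which is possible because the orbit set is finite and a closed orbit can be peeled off at each stage. By the stratification axiom this reduces the problem to showing that every $B$-orbit $B/P$ is $T$-linear. Writing $B = T \ltimes U$ with $U$ unipotent and, after conjugation, $P = T_P \ltimes U_P$ with $T_P \subseteq T$ and $U_P \subseteq U$, one obtains a $T$-equivariant identification of $B/P$ with the product of the quotient torus $T/T_P$ and the affine space $U/U_P$ (on which $T$ acts linearly via conjugation on $U$). Combining this with part (ii) by stratifying $T/T_P$ using the inductive procedure of (ii) and then taking products with $U/U_P$ --- each such product being the open complement in a larger $T$-module of a union of coordinate-subspace strata --- yields that $B/P$ is $T$-linear. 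The delicate points will be making the Levi-type decomposition of $P$ work when $P$ is not connected and ensuring that the \emph{product of $T$-linear is $T$-linear} step is handled by the existing three axioms rather than having to extend the definition. The consequence for spherical varieties follows at once, since a Borel subgroup acts on any spherical variety with finitely many orbits.
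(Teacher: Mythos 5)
First, a point of reference: the paper does not actually prove this theorem --- it quotes items (i)--(ii) from \cite{jk:chow} and item (iii) from \cite{go:tlin} --- so your argument is being measured against those sources rather than against a proof in the text. Your parts (i) and (ii) are correct and follow the standard route. For (i), note that you do need the cells to be $T$-\emph{equivariantly} isomorphic to $T$-modules, not merely abstractly isomorphic to affine spaces; this is what Bia{\l}ynicki-Birula's theorem supplies in the smooth case, as you observe. Your induction in (ii), realizing a quotient torus as the complement in $\A^d$ of the coordinate cross and stratifying the cross by smaller quotient tori, is exactly how tori are shown to be linear in \cite{to:linear} and \cite{jk:chow}.

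Part (iii), however, contains a genuine gap at the step you state as an identification: ``one obtains a $T$-equivariant identification of $B/P$ with the product of $T/T_P$ and $U/U_P$.'' Even granting the Levi-type decomposition $P=T_P\ltimes U_P$ (which you rightly flag as unproven for disconnected $P$, and which is delicate in positive characteristic --- recall the paper works over a field of arbitrary characteristic), the natural structure on $B/P$ is only that of the associated bundle $T\times^{T_P}(U/U_P)$ over $T/T_P$: the subgroup $U_P=P\cap U$ is normalized by $T_P$ but in general \emph{not} by $T$, so conjugation does not descend to a $T$-action on $U/U_P$, and there is no canonical $T$-equivariant product decomposition. To trivialize the bundle equivariantly via $[t,f]\mapsto (tT_P,\,t\cdot f)$ you must first extend the $T_P$-action on $U/U_P$ to a $T$-action, which amounts to showing that $U/U_P$ is $T_P$-equivariantly isomorphic to a $T_P$-module (whose characters then extend from $T_P$ to $T$). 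In characteristic zero this follows from the exponential identification of $U/U_P$ with $\mathrm{Lie}(U)/\mathrm{Lie}(U_P)$, but you neither state nor prove it, and in characteristic $p$ it requires a separate argument using a $T$-stable filtration of $U$ by one-parameter subgroups. This linearization step is precisely where the content of (iii) lies; as written, your argument produces $B/P\simeq (T/T_P)\times(U/U_P)$ only as abstract varieties, which is not enough to feed into the stratification axiom for $T$-linearity.
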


%\subsection{$T$-linear schemes} 

\subsection{Nondegenerate fixed points and equivariant multiplicities}
Let $X$ be a $T$-scheme. 
A fixed point $x\in X$ 
is called {\em nondegenerate} 
if all weights of $T$ in the tangent space 
$T_xX$ are non-zero. 
A fixed point in a nonsingular $T$-variety is nondegenerate 
if and only if it is isolated. 
To study possibly singular schemes,  %(e.g. Schubert varieties), 
Brion 
developed a notion of {\em equivariant multiplicity} at nondegenerate 
fixed points \cite{bri:eqchow}. 
%has been defined in the equivariant Chow groups due to work of Brion \cite{bri:eqchow}. 
The main features of this concept are outlined below. % follows.

\begin{thm}[\protect{\cite[Theorem 4.2]{bri:eqchow}}]\label{eqmult.prop.thm}
Let $X$ be a $T$-scheme with an action of $T$, 
let $x\in X$ be a nondegenerate
fixed point and let $\chi_1,\ldots,\chi_n$ be the weights of $T_xX$ 
(counted with multiplicity).  
\begin{enumerate}[(i)]
 \item There exists a unique $S$-linear map 
$$e_{x,X}:A^T_*(X)\longrightarrow \frac{1}{\chi_1\cdots \chi_n}S$$
such that $e_{x,X}[x]=1$ and that $e_{x,X}[Y]=0$ for any 
$T$-invariant irreducible 
subvariety $Y\subset X$ which does not contain $x$.

\item For any $T$-invariant irreducible subvariety $Y\subset X$, the rational function 
$e_{x,X}[Y]$ is homogeneous of degree $-\dim(Y)$ and it coincides with 
$e_{x,Y}[Y]$. 

\item The point $x$ is nonsingular in $X$ if and only if $e_x[X]=\displaystyle \frac{1}{\chi_1\cdots\chi_n}.$ \hfill $\square$
\end{enumerate}
\end{thm}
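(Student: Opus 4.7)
The plan is to construct $e_{x,X}$ by combining the localization theorem (Theorem \ref{loc.chow.thm}) with the presentation of equivariant Chow groups by generators and relations (Theorem \ref{Tequiv.thm}), and then to control denominators via deformation to the tangent cone. Since every weight of $T_xX$ is nonzero, $x$ is isolated in $X^T$, so $\{x\}$ is a connected component of $X^T$ and $A^T_*(X^T)$ decomposes as $S\cdot[x]\oplus A^T_*(X^T\setminus\{x\})$. This gives a canonical $S$-linear projection $p_x:A^T_*(X^T)\to S$ with $p_x[x]=1$, vanishing on invariant cycles not supported at $x$.

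Applying Theorem \ref{loc.chow.thm} to $H=T$, I would choose a finite family $F$ of characters restricting nontrivially to $T$, arranged to contain $\chi_1,\ldots,\chi_n$, such that $i_{T*}$ becomes an isomorphism after inverting $F$. The composition $p_x\circ(i_{T*})^{-1}$ then defines an $S$-linear map $\tilde{e}_{x,X}:A^T_*(X)\to S[F^{-1}]$ that sends $[x]$ to $1$ and vanishes on $[Y]$ for any $T$-invariant irreducible subvariety $Y$ not containing $x$, since such classes lift uniquely into $A^T_*(X^T\setminus\{x\})[F^{-1}]$. Uniqueness is immediate from Theorem \ref{Tequiv.thm}: on the generators $[Y]$ with $Y\ni x$, the value is forced by $S$-linearity once we have a construction at all.

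The crucial step is to sharpen the image from $S[F^{-1}]$ down to the much smaller $S$-submodule $\frac{1}{\chi_1\cdots\chi_n}S$. It suffices to verify this on generators $[Y]$ with $Y\ni x$, and by functoriality of the construction under the closed immersion $Y\hookrightarrow X$ one reduces to the case $X=Y$. Here $x$ remains a nondegenerate fixed point of $Y$, and the weights $\chi_{i_1},\ldots,\chi_{i_d}$ of $T_xY$ (with $d=\dim Y$) form a sub-multiset of $\{\chi_1,\ldots,\chi_n\}$. Deformation to the tangent cone produces a $T$-equivariant degeneration of $Y$ to its tangent cone $C_xY\subset T_xY$ that preserves equivariant Chow classes; an explicit affine computation on $T_xY$ (for instance via the Koszul resolution of the origin) then yields $e_{x,Y}[Y]\in \frac{1}{\chi_{i_1}\cdots\chi_{i_d}}S$, simultaneously giving the homogeneity of degree $-\dim Y$ and the identity $e_{x,X}[Y]=e_{x,Y}[Y]$ asserted in (ii). For (iii), smoothness of $x$ means $X$ is equivariantly locally modeled on $T_xX$ near $x$, forcing $e_x[X]=1/(\chi_1\cdots\chi_n)$; conversely, the same tangent cone description shows $e_x[X]\cdot\chi_1\cdots\chi_n$ equals the multiplicity of $C_xX$ as a cycle in $T_xX$, and multiplicity one characterizes smoothness at $x$.

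The main obstacle is this denominator bound. The localization theorem only places the values in $S[F^{-1}]$ where $F$ is typically much larger than $\{\chi_1,\ldots,\chi_n\}$, and cutting down to $\frac{1}{\chi_1\cdots\chi_n}S$ requires a genuinely geometric input: one needs to replace $Y$ by its tangent cone at $x$ without losing control of equivariant classes, and then verify the explicit affine computation. Once that is in place, parts (i) and (ii) are essentially formal, and (iii) follows from the interpretation of $e_x[X]\cdot\chi_1\cdots\chi_n$ as a local multiplicity.
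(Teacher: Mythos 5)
This theorem is imported verbatim from Brion \cite{bri:eqchow} and the paper offers no proof of it (note the box in the statement), so the only thing to compare against is Brion's own argument, which the paper recalls in passing in the proof of Lemma~\ref{cell.inj.lem}: there one first shows that $i_*:A^T_*(x)\to A^T_*(U)$ is injective with image containing $\chi_1\cdots\chi_n A^T_*(U)$ for a suitable invariant affine neighborhood $U$ of $x$ (by covering $U\setminus\{x\}$ by the non-vanishing loci of eigenfunctions spanning $\mathfrak{m}_x/\mathfrak{m}_x^2$, each of which has equivariant Chow group killed by the corresponding weight), and then sets $e_x(\alpha)=\beta/(\chi_1\cdots\chi_n)$ where $i_*\beta=\chi_1\cdots\chi_n\alpha$. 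Your route is genuinely different at the crucial step: you get the denominator bound by degenerating $Y$ to its tangent cone and computing in a representation, which has the advantage of delivering (ii) and (iii) almost for free, since $e_x[X]\cdot\chi_1\cdots\chi_n$ is literally identified with the multiplicity of the tangent cone. The skeleton (localization gives existence and uniqueness up to denominators; geometry sharpens the denominators) is sound and matches Brion's.

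Two points need tightening. First, uniqueness is not ``immediate from Theorem~\ref{Tequiv.thm}'': the defining conditions say nothing directly about $[Y]$ for positive-dimensional $Y\ni x$, so you must invoke Theorem~\ref{loc.chow.thm} (a product of characters $P$ times any class lies in the image of $i_{T*}$, hence is a combination of $[x]$ and classes of subvarieties of $X^T\setminus\{x\}$) together with torsion-freeness of the target to conclude $P\cdot(e-e')(\alpha)=0$ forces $e=e'$. Second, the degeneration cannot be performed ``to $C_xY\subset T_xY$'' as an equality of classes unless $Y$ is embedded there, which fails for nondegenerate but non-attractive $x$ (Theorem~\ref{at.fx.p.thm} only gives an embedding into $T_xX$ in the attractive case). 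You should instead take an invariant affine neighborhood $U$ of $x$ in $Y$ (local linearity), embed it equivariantly in some $T$-module $V$ with $x\mapsto 0$, and run the Rees degeneration inside $V$; since $C_xU\subseteq T_xY\subseteq V$ and $[T_xY]=\prod_{\chi\in V/T_xY}\chi\cdot[V]$ in $A^T_*(V)$, the a priori denominator $\prod_{\chi\in V}\chi$ collapses to $\prod_{\chi\in T_xY}\chi$ as desired. With those repairs the argument goes through.
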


For any $T$-invariant irreducible 
subvariety $Y\subset X$, we set $e_{x,X}[Y]:=e_x[Y]$, 
and we call $e_x[Y]$ the 
{\em
equivariant multiplicity of $Y$ at $x$}. 

\begin{prop}[\protect{\cite[Corollary 4.2]{bri:eqchow}}]\label{eqmult.loc.prop}
Let $X$ be a $T$-scheme such that all fixed 
points in $X$ are nondegenerate, and let 
$\alpha \in A^T_*(X)$. Then we have 
in $A^T_*(X)\otimes_S \mathcal{Q}$: 
$$\alpha=\sum_{x\in X^T}e_x(\alpha)[x].$$ 
%In particular, if $X$ is equidimensional, then 
%$$[X]=\sum_{x\in X^T}e_x[X][x],$$ 
%for uniquely defined coefficients 
%$e_x[X]\in \mathcal{Q}$ which are 
%homogeneous of degree $-\dim{(X)}$.      
\hfill $\square$
\end{prop}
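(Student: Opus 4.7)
The plan is to leverage the localization isomorphism (Theorem \ref{loc.chow.thm}) to reduce $\alpha$ to a $\mathcal{Q}$-linear combination of fixed-point classes, and then identify the coefficients by applying the equivariant multiplicity maps $e_x$, using their defining orthogonality property from Theorem \ref{eqmult.prop.thm}(i).

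First I would observe that since every fixed point $x \in X^T$ is nondegenerate, no weight of $T$ on $T_xX$ vanishes, which forces $x$ to be isolated in $X^T$; hence $X^T$ is a finite (reduced) union of points, and
\[
A^T_*(X^T) = \bigoplus_{x\in X^T} S\cdot [x].
\]
Next, by Theorem \ref{loc.chow.thm} applied with $H = T$, the pushforward $i_{T*}\colon A^T_*(X^T)\to A^T_*(X)$ becomes an isomorphism after inverting finitely many characters; in particular, tensoring with $\mathcal{Q}$ yields an $\mathcal{Q}$-linear isomorphism
\[
\bigoplus_{x\in X^T} \mathcal{Q}\cdot [x] \;\xrightarrow{\sim}\; A^T_*(X)\otimes_S \mathcal{Q}.
\]
Therefore any $\alpha\in A^T_*(X)$ can be written, in $A^T_*(X)\otimes_S\mathcal{Q}$, uniquely in the form
\[
\alpha = \sum_{x\in X^T} c_x(\alpha)\,[x], \qquad c_x(\alpha)\in \mathcal{Q}.
\]

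To identify $c_x(\alpha) = e_x(\alpha)$, I would apply the $S$-linear map $e_y\colon A^T_*(X)\to \mathcal{Q}$ (which extends to an $\mathcal{Q}$-linear map on the localization) to both sides of the displayed equation. By Theorem \ref{eqmult.prop.thm}(i), $e_y[y] = 1$ and $e_y[Y] = 0$ whenever $Y$ is a $T$-invariant irreducible subvariety not containing $y$; in particular $e_y[x] = 0$ for $x \neq y$, so $e_y[x] = \delta_{xy}$. Thus
\[
e_y(\alpha) = \sum_{x\in X^T} c_x(\alpha)\, e_y[x] = c_y(\alpha),
\]
which yields the desired formula.

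The only delicate point is ensuring that $e_x$ makes sense on the localization and that the orthogonality $e_y[x]=\delta_{xy}$ survives there; both follow immediately from $S$-linearity of $e_x$ and Theorem \ref{eqmult.prop.thm}(i), so no further work is required beyond invoking the localization theorem and the definition of equivariant multiplicity.
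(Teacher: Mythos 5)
Your proof is correct and is essentially the argument behind the cited result \cite[Corollary 4.2]{bri:eqchow} (the paper itself gives no proof, only the citation): nondegeneracy forces $X^T$ to be a finite reduced set of points, localization over $\mathcal{Q}$ gives the unique expansion $\alpha=\sum_x c_x[x]$, and the orthogonality $e_y[x]=\delta_{xy}$ from Theorem \ref{eqmult.prop.thm}(i) identifies $c_x=e_x(\alpha)$. No gaps.
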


Next we consider a  
special class of nondegenerate fixed points. 
%\subsubsection{Attractive fixed points} 
%\begin{dfn}%\label{attractive.fx.p.dfn}
Let $X$ be a $T$-variety. Call a fix point $x\in X$
{\em attractive} 
if all weights in the tangent space $T_xX$ are contained 
in some open half-space of $\Delta_\R=\Delta\otimes_{\Z} \R$, that is,  
some one-parameter subgroup of $T$ acts on $T_xX$ with positive weights only.  
%Attractive fixed points are nondegenerate. 
Below is a characterization.  

\begin{thm}[\protect{\cite[Proposition A2]{bri:rat}}] \label{at.fx.p.thm}
Let $X$ be a $T$-variety with a fixed point $x$. The following 
conditions are equivalent: 
\begin{enumerate}[(i)]
 \item $x$ is attractive. 
 \item There exists a one-parameter subgroup $\lambda:\G\to T$ 
such that, for all $y$ in a neighborhood of $x$, 
we have $\displaystyle \lim_{t\to 0}\lambda(t)y=x$. 
\end{enumerate}
%If (ii) holds, then the set 
%$$X_+(x,\lambda):=\{y\in X\;|\; \lim_{t\to 0}\lambda(t)y=x\}$$
%is an open affine $T$-stable neighborhood of $x$, which 
%admits a closed $T$-equivariant embedding into $T_xX$. 
Moreover, if (i) or (ii) holds, then 
$x$ admits a unique open affine $T$-stable 
neighborhood in $X$, denoted $X_x$, 
and $X_x$ admits 
a closed equivariant embedding into $T_xX$. \hfill $\square$ 
\end{thm}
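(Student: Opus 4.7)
The plan is to work inside a $T$-stable affine open neighborhood $U\ni x$ (available by local linearity) and to exploit the $T$-weight decomposition of its coordinate ring. For any $T$-weight eigenvector $f\in\mathfrak m_x\cap\k[U]$ of weight $\chi$, one has $f(\lambda(t)\cdot y)=t^{-\langle\chi,\lambda\rangle}f(y)$. Assuming (ii), this expression must tend to $0$ as $t\to 0$ for $y$ in a dense open subset near $x$, forcing $\langle\chi,\lambda\rangle<0$ whenever $f$ is not identically zero near $x$; dually every weight of $T_xX=(\mathfrak m_x/\mathfrak m_x^2)^*$ pairs strictly positively with $\lambda$, which is (i).

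Conversely, start from (i) and pick $\lambda$ witnessing attractiveness. The weights of $\mathfrak m_x/\mathfrak m_x^2$ all pair negatively with $\lambda$, and since $\mathfrak m_x^k/\mathfrak m_x^{k+1}$ is a $T$-equivariant quotient of $\mathrm{Sym}^k(\mathfrak m_x/\mathfrak m_x^2)$, so do the weights of every graded piece. Let $I\subset\k[U]$ be the ideal generated by the weight eigenvectors of $\mathfrak m_x\cap\k[U]$ of $\lambda$-non-negative weight. Every such generator has zero image in each $\mathfrak m_x^k/\mathfrak m_x^{k+1}$ on weight grounds, hence lies in $\bigcap_k\mathfrak m_x^k=(0)$ inside the Noetherian local ring $\mathcal O_{X,x}$ by Krull's intersection theorem. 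Thus $I\cdot\mathcal O_{X,x}=0$, and since $\k[U]$ is Noetherian, $I$ is annihilated by a $T$-invariant element $s\in\k[U]$ with $s(x)\neq 0$. Setting $X_x:=U_s$ produces a $T$-stable affine open neighborhood of $x$ in which every element of $\mathfrak m_x$ is a sum of weight eigenvectors of strictly $\lambda$-negative weight; reading the previous paragraph backwards on $X_x$ now yields (ii).

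For the closed embedding into $T_xX$, I would choose a $T$-equivariant lift $V\subset\mathfrak m_x\cap\k[X_x]$ of $\mathfrak m_x/\mathfrak m_x^2$, so that $V^*\cong T_xX$ as $T$-modules. The induced homomorphism $\mathrm{Sym}(V)\to\k[X_x]$ is surjective on associated graded for the $\mathfrak m_x$-adic filtration, by construction of $V$. Because attractiveness makes each $T$-weight space on both sides finite dimensional (the $\lambda$-pairing of a degree-$k$ monomial in $V$ is bounded above by a strictly negative multiple of $k$, so only finitely many degrees contribute to a prescribed weight), the surjection on associated graded lifts to a surjection of rings. Dually this is the desired closed equivariant embedding $X_x\hookrightarrow V^*=T_xX$. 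Uniqueness follows because the subring of $\mathcal O_{X,x}$ generated by the $\lambda$-negative weight eigenvectors depends only on $(X,T,x,\lambda)$, so performing the construction inside any other $T$-stable affine neighborhood produces the same coordinate ring and hence the same $X_x$.

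The main obstacle is the last step, the promotion from an associated-graded surjection to a surjection of rings: a naive lifting succeeds only after $\mathfrak m_x$-adic completion, and one really needs the finite-dimensionality of each $T$-weight space, guaranteed by attractiveness, to rule out a nonzero kernel. A secondary subtlety is the shrinking in the middle step, where one must verify that a single $T$-invariant denominator $s$ clears the entire ideal $I$ even though $I$ may involve infinitely many weight eigenvectors; Noetherianity of $\k[U]$ is what makes this possible.
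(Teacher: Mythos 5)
The paper does not prove this statement; it is quoted verbatim from Brion's appendix (\cite[Proposition A2]{bri:rat}), so there is no in-paper argument to compare against. Your overall strategy -- decomposing $\k[U]$ into $T$-weight spaces, using the sign of $\langle\chi,\lambda\rangle$ to characterize attractiveness, killing the non-negative-weight part of $\mathfrak m_x$ via Krull's intersection theorem, and dualizing a surjection $\mathrm{Sym}(V)\twoheadrightarrow\k[X_x]$ to get the closed embedding into $T_xX$ -- is exactly the standard argument, and the equivalence (i)$\Leftrightarrow$(ii), the construction of $X_x$, and the embedding are essentially correct as written (the passage from the associated-graded surjection to a genuine surjection is loose, but your finite-dimensionality remark is the right mechanism: one runs an induction on the $\lambda$-degree $n$, using $\mathfrak m_x=V+\mathfrak m_x^2$ and the fact that any degree-$n$ element of $\mathfrak m_x^2$ is a sum of products of strictly lower-degree eigenvectors).

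The genuine gap is in the uniqueness claim. What you prove is that your \emph{construction} is canonical: starting from any $T$-stable affine open $U$, the output $U_s$ has coordinate ring determined by $(X,T,x,\lambda)$. But the theorem asserts that $X_x$ is the \emph{only} open affine $T$-stable neighborhood of $x$, and an arbitrary such neighborhood $U'$ need not arise as the output of your construction -- a priori $U'_s\subsetneq U'$, and nothing you wrote rules this out. To close the gap you must show two inclusions. First, $X_x\subseteq U'$: for $y$ with $\lim_{t\to0}\lambda(t)y=x$, the extended orbit map $\A^1\to X$ pulls $U'$ back to a $\G$-stable open subset of $\A^1$ containing $0$, which is forced to be all of $\A^1$, so $y\in U'$. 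Second, $U'\subseteq X_x$: here you rerun your Krull argument inside $\k[U']$ to conclude that every weight eigenvector of $\mathfrak m_x\cap\k[U']$ with $\lambda$-non-negative weight vanishes on a neighborhood of $x$, and then you need irreducibility (or at least that $x$ lies on every component of $U'$) to upgrade ``vanishes near $x$'' to ``vanishes identically,'' whence $\k[U']$ is non-negatively graded with degree-zero part $\k$ and every point of $U'$ flows to $x$. Without some connectedness hypothesis the uniqueness statement is actually false (take $X$ a disjoint union of $\A^1$ and a fixed point: both $X$ and $\A^1$ are $T$-stable affine open neighborhoods of the attractive origin), so this step is where the real content of uniqueness lives and it cannot be waved away by canonicity of the construction.
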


Let $X$ be a $T$-variety with an attractive fixed point $x$. 
Denote by $\chi_1,\ldots, \chi_n$ the weights of $T_xX$. 
Let $\Delta^*$ be the lattice of one-parameter subgroups of $T$, 
and let $\Delta^*_{\R}$ be the associated real vector space.  
Notice that the one-parameter subgroups $\lambda$ 
satisfying Theorem \ref{at.fx.p.thm} (ii) form the interior of a 
rational polyhedral cone $\sigma_x\subset \Delta^*_{\R}$,  
%with non-empty interior $\sigma_x^0$, 
by setting 
$$\sigma_x:=\{\lambda \in \Delta^*_{\R}\;\;|\;\; \langle \lambda,\chi_i\rangle \geq 0 \;\,{\rm for}\;\, 1\leq i\leq n \}.$$ 
It follows from Theorem \ref{at.fx.p.thm} that $X_x$ equals 
$X_+(x,\lambda)$ %=\{y\in X\;|\; \displaystyle \lim_{t\to 0}\lambda(t)y=x\}$
for any $\lambda \in \sigma_x^0$. 
%See 
%\cite[Section 4]{bri:ech} for details. 

\begin{prop}[\protect{\cite[Proposition 4.4]{bri:eqchow}}]\label{attractive.prop}
%Let $x\in X$ be an attractive fixed point. 
Notation being as above, the rational function $e_x[X]$, 
viewed as a rational function on $\Delta^*_\R$, 
is defined at $\lambda$ and its 
value is the multiplicity of the algebra of 
regular functions on $X_x$ 
graded via the action of $\lambda$. In particular, 
$e_x[X]$ is non-zero. \hfill $\square$
\end{prop}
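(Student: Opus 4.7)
The plan is to (i) verify that $e_x[X]$ is defined at $\lambda$, (ii) localize the computation to the affine open $X_x$ and embed it into $V := T_xX$, and (iii) match $e_x[X_x](\lambda)$ with the Hilbert--Samuel multiplicity of $\mathcal{O}(X_x)$ graded by $\lambda$.

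Step (i) is immediate: by Theorem \ref{eqmult.prop.thm}(i), $e_x[X]$ lies in $\frac{1}{\chi_1\cdots\chi_n}S$, which as a rational function on $\Delta^*_\R$ has possible poles only along the hyperplanes $\{\chi_i = 0\}$, and $\lambda\in\sigma_x^0$ misses all of them. For step (ii), Theorem \ref{eqmult.prop.thm}(ii) says that equivariant multiplicity is local near $x$, so $e_x[X]=e_x[X_x]$; Theorem \ref{at.fx.p.thm} then realizes $X_x$ as a $T$-stable closed subvariety of $V$, making $A:=\mathcal{O}(X_x)$ a $\Delta$-graded quotient of $S(V^*)$. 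Setting $\deg\chi := \langle \lambda,\chi\rangle$ turns $A$ into a nonnegatively graded algebra with $A_0=\k$ and $\dim_\k A_d<\infty$ (because $\lambda\in\sigma_x^0$), so its Hilbert series has the form $H_A(t)=P(t)/(1-t)^n$ with $n=\dim X$, and its Hilbert--Samuel multiplicity is $e(A):=P(1)$.

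Step (iii) is the heart of the argument. I would compare two expressions for the equivariant Hilbert series
$$F^T_A(\mu) \;=\; \sum_\chi \dim(A_\chi)\, e^{\chi(\mu)},$$
viewed as an analytic function of $\mu$ in the tube $\{\langle\mathrm{Re}\,\mu,\chi_i\rangle>0\}$ which extends meromorphically across the origin. On the one hand, specializing $\mu=s\lambda$ recovers the ordinary Hilbert series, in the form $H_A(e^s)$, with a pole of order $n$ at $s=0$ whose leading coefficient is, up to the standard combinatorial factor, the multiplicity $e(A)$. On the other hand, the class $[X_x]\in A^T_*(V)$ determines $F^T_A(\mu)$ through a Riemann--Roch-type correspondence between equivariant Chow classes on a representation and equivariant K-theoretic characters, and in this correspondence $e_x[X_x]$ is exactly the leading polar part of $F^T_A(\mu)\,\chi_1(\mu)\cdots\chi_n(\mu)$ at $\mu=0$. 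Matching leading coefficients along the ray $\mathbb{R}_{>0}\lambda$ yields $e_x[X_x](\lambda)=e(A)$. Nonvanishing then follows because $A$ is a nonzero positively graded domain, which forces $P(1)>0$.

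The main obstacle is making the comparison in step (iii) precise: one must translate between equivariant Chow groups (the setting in which $e_x$ is defined, via Theorem \ref{eqmult.prop.thm}) and equivariant Hilbert series (the natural home of the graded-ring multiplicity). Brion's original route goes through equivariant Riemann--Roch; an alternative is an induction on $\dim X_x$ and on the number of $T$-orbits in $X_x$, writing $[X_x]$ as an $S$-linear combination of invariant orbit closures in $A^T_*(V)$ and checking the claimed equality orbit by orbit, using Proposition \ref{eqmult.loc.prop} and the base case of a $T$-stable linear subspace of $V$, for which $e_0$ is the reciprocal of the product of its weights by Theorem \ref{eqmult.prop.thm}(iii).
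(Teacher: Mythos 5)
First, a point of reference: the paper gives no proof of this proposition at all --- it is quoted verbatim from Brion \cite[Proposition 4.4]{bri:eqchow} with a closing box --- so the only meaningful comparison is with Brion's original argument, whose overall shape (reduce to a closed $T$-stable subvariety of $T_xX$ and compare $e_x$ with graded Hilbert data) your outline does match. Your steps (i) and (ii) are sound: the representative $\beta/(\chi_1\cdots\chi_n)$ of $e_x[X]$ has denominator nonvanishing on $\sigma_x^0$, and the passage to $X_x\subseteq T_xX$ is legitimate since the equivariant multiplicity only depends on a $T$-stable neighbourhood of $x$. One bookkeeping caution: you use $n$ both for $\dim X$ and for the number of tangent weights; these differ exactly when $x$ is singular, which is the case of interest, and the degree count ($e_x[X]$ homogeneous of degree $-\dim X$, while $[X_x]\in A^T_*(T_xX)\simeq S$ is homogeneous of degree $\dim T_xX-\dim X$) only closes up if you keep them separate. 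Also, $X_x$ need not be irreducible, so "positively graded domain" should be replaced by the (still valid) positivity of the multiplicity of a nonzero positively graded algebra.

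The genuine gap is step (iii), which you yourself flag. The identity you invoke --- that $[X_x]\in A^T_*(V)\simeq S$ is the lowest-order term at $\mu=0$ of $F^T_A(\mu)\prod_i\bigl(1-e^{-\chi_i(\mu)}\bigr)$, equivalently that $e_x[X_x]$ is the leading polar part of $F^T_A\cdot\chi_1\cdots\chi_n$ --- is not an off-the-shelf black box in this context: it is essentially equivalent to the proposition being proved, and calling it "a Riemann--Roch-type correspondence" names it without establishing it. The fallback induction has the same problem in disguise: to check the equality "orbit by orbit" after expanding $[X_x]$ in $A^T_*(V)$, you must first know that the functional $Y\mapsto e\bigl(\mathcal{O}(Y)\bigr)(\lambda)$ kills the defining relations $[\mathrm{div}_Y(f)]-\chi[Y]$ of Theorem \ref{Tequiv.thm}, i.e.\ that the graded multiplicity of $\mathcal{O}(Y)/(f)$ equals $\langle\lambda,\chi\rangle$ times that of $\mathcal{O}(Y)$; that compatibility is precisely the inductive step on which Brion's proof turns, and it is the one piece of real work the proposal does not supply. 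Either route can be completed, but as written the entire content of the statement is deferred to an unproved comparison.
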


\subsection{Local study. Some inequalities relating Chow 
groups and fixed point loci}
Let $X$ be an affine $T$-variety with an attractive fixed point $x$. 
It follows from Proposition \ref{attractive.prop} that 
$X=X_+(x,\lambda)$
%\{y\in X \,|\, \lim_{t\to 0}\lambda(t)\, y=\,x_0 \},$$
for any $\lambda\in \sigma_x^0$.  
Also, $\{x\}$ is the unique closed $T$-orbit in $X$, and  
$X$ admits a closed $T$-equivariant embedding into $T_xX$. 
Observe that $\dim_x X=\dim X$, because $x$ 
is contained in every 
irreducible component of $X$.  

Choose $\lambda\in \sigma_x^0$. 
Then all the weights of the $\G$-action 
on $T_xX$ via $\lambda$ are positive. Hence  
the geometric quotient 
$$
\P_{\lambda}(X):=(X\setminus\{x\})/\mathbb{G}_m
$$
exists and is a projective variety. In fact, 
it is a closed 
subvariety of the weighted projective space $\P_\lambda(T_xX)$. 
%We call $\P_{\lambda}(X)$ the {\em algebraic link} at $x$. 
On the other hand, by \cite[Proposition A3]{bri:rat},  
%\cite[Proposition 4.3]{bri:eqchow}}] 
%\label{brion.arabia.map.rem}
there exists a $\G$-module $V$ 
and a finite equivariant surjective morphism 
$\pi:X\to V$ such that $\pi^{-1}(0)=\{x\}$ (as a set). 
%With this map at our disposal, 
%we get 
%will allow us to provide a 
%lower bounds  
%for the size of the Chow groups of 
%$\P_\lambda(X)$. 
This allows  
to estimate 
the size 
of the Chow groups of 
$\P_\lambda(X)$ in various cases.

\begin{lem} \label{min.size.chow.lem}
In the situation above, 
%Moreover, 
%let $\pi:X\to V$ be 
%the $\G$-equivariant map from 
%Remark \ref{brion.arabia.map.rem}. 
%Then 
$\pi$ induces a 
surjection  
%map %at the level of rational Chow groups 
$$
\pi_*:A_k(\P_\lambda(X))\to A_k(\P_\lambda(V))
$$  
for all $k\geq 0$. Consequently, 
$A_k(\P_\lambda(X))\neq 0$ if  
$0\leq k\leq \dim{(X)}$, and 
$A_k(\P_\lambda(X))=0$ otherwise.  \hfill $\square$
%These observations will lead us 
%to the definition of algebraic rational cell. 
\end{lem}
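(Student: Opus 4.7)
The plan is to descend $\pi$ to a finite surjective morphism between the projective quotients, and then to use the standard fact that finite surjective morphisms of varieties induce surjections on rational Chow groups, combined with the explicit description of $A_*(\P_\lambda(V))_\Q$ as a weighted projective space.

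First, I would observe that since $\pi$ is $\G$-equivariant, finite, and satisfies $\pi^{-1}(0)=\{x\}$ set-theoretically, its restriction to $X\setminus\{x\}\to V\setminus\{0\}$ is again finite, $\G$-equivariant, and surjective. Passing to the geometric $\G$-quotients, I obtain a finite surjective morphism of projective varieties $\bar\pi\colon \P_\lambda(X)\to \P_\lambda(V)$, where $\P_\lambda(V)$ is a weighted projective space of dimension $n-1=\dim X -1$. Its rational Chow groups are well known: $A_k(\P_\lambda(V))_\Q=\Q$ for $0\leq k\leq n-1$, with generator given by the class $[\P_\lambda(W)]$ of a weighted linear subvariety associated to a $\G$-stable linear subspace $W\subset V$ of dimension $k+1$ (this can be seen directly, or by viewing $\P_\lambda(V)$ as the quotient of an ordinary projective space by a finite group).

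Next, I would prove surjectivity of $\bar\pi_*$ by realizing each generator $[\P_\lambda(W)]$ in the image. Given such $W$, the preimage $\pi^{-1}(W)$ (with its reduced structure) is a $\G$-invariant closed subscheme of $X$; it contains $x$ because $0\in W$, and since $\pi$ is finite its dimension equals $\dim W = k+1$. The restriction $\pi\colon \pi^{-1}(W)\to W$ is finite and surjective, so because $W$ is irreducible at least one irreducible component $Y\subset \pi^{-1}(W)$ must map onto $W$, say with degree $d\geq 1$. Such a $Y$ is automatically $\G$-stable, and $\P_\lambda(Y):=(Y\setminus\{x\})/\G$ is a closed $k$-dimensional subvariety of $\P_\lambda(X)$ whose image under $\bar\pi$ is $\P_\lambda(W)$ with degree $d$. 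Therefore $\bar\pi_*[\P_\lambda(Y)]=d\cdot[\P_\lambda(W)]$ generates $A_k(\P_\lambda(V))_\Q$, proving surjectivity.

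The final assertion then follows: the surjection onto the nonzero group $A_k(\P_\lambda(V))_\Q$ forces $A_k(\P_\lambda(X))\neq 0$ in the nontrivial range, while the vanishing outside this range is immediate from $\dim \P_\lambda(X)=\dim X -1$. I expect the main technical point to be the component-counting step, i.e., verifying that an irreducible component of $\pi^{-1}(W)$ actually dominates $W$; this is where irreducibility of $W$ together with finiteness of $\pi$ is essential, since otherwise the fibers of $\bar\pi_*$ on the weighted linear generators could fail to be hit.
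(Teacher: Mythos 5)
Your argument is correct and is exactly the (implicit) argument the paper intends: the lemma is stated without proof because it reduces to the standard fact that a finite surjective morphism of varieties induces a surjection on rational Chow groups (take a component of the preimage dominating the given cycle), together with the known Chow groups of the weighted projective space $\P_\lambda(V)$. The only caveat is cosmetic: the stated range $0\leq k\leq \dim(X)$ is an off-by-one slip in the paper for $0\leq k\leq \dim(X)-1$, which your proof handles correctly since $\dim\P_\lambda(X)=\dim X-1$.
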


\begin{rem} \label{brion.arabia.map.rem} 
Clearly, $\pi_*:A_*(X)\to A_*(V)$ is also surjective. 
%In the situation above, there exists a $\G$-module $V$ 
%and a finite equivariant surjective morphism 
%$\pi:X\to V$ such that $\pi^{-1}(0)=\{x\}$ (as a set). 
Observe that 
if $X$ is equidimensional  
and $d$ is the 
degree of $\pi$, then 
%we have 
$$e_x[X]=d\cdot e_0[V],$$ 
where $e_x[X]$ (resp. $e_0[V]$) 
is the 
$\G$-equivariant multiplicity of $X$ at $x$ (resp. of $V$ at $0$) 
\cite[Proposition 4.3]{bri:eqchow}.  
\end{rem}

\smallskip

%Let $\P_\lambda(X)$ be as above and 
Now assume that  
$\dim_\Q{A_*(\P_\lambda(X))}<\infty$. 
We record below a few elementary inequalities:   
%the following hold: 
\smallskip

\begin{enumerate}[(1)]
\item $\dim{A_*(\P_\lambda(X))}\geq \dim{A_*(\P_\lambda(X)^T)}$, 
by Lemma \ref{dim.fix.set.lem}. 
\smallskip

\item Notice that $\P_\lambda(X)^T=\bigsqcup_{H} \P_\lambda(X^{H})$, 
where the union runs over 
all codimension-one subtori of $T$. 
In fact, by linearity of the action, 
there is only a finite 
collection of codimension-one subtori, 
say $H_1,\ldots, H_r$, for which $X^{H_j}\neq X^T$.
Thus    
$$\dim_\Q A_*(\P_\lambda(X)^T)=\sum_{H_j}\dim_\Q A_*(\P_\lambda(X^{H_j})).$$ 
%sum over all subtori $H\subset T$ of codimension one. 

\smallskip

\item 
%As it was emphasized in (2), there is only a finite 
%collection of codimension-one subtori, 
%say $H_1,\ldots, H_r$, for which $X^{H_j}\neq X^T$. 
Let $H_j$ be as in (2). 
%Under the present conditions, %circumstances, 
%Given that $x$ is attractive, 
We may also assume that $x$ is an attractive fixed point of $X^{H_j}$, 
for the action of $\G\simeq T/H_j$. 
Hence, as in Lemma \ref{min.size.chow.lem}, 
there is a $T$-equivariant finite surjective 
map $\pi_j:X^{H_j}\to V_j$, 
where $V_j$ is some $T$-module with a trivial action of $H_j$. 
%(Notice that $T$ acts on both $X^{H_j}$ and $V_j$ through 
%the same character.) 
Thus 
$\dim_\Q A_*(\P_\lambda(X^{H_j})) \geq \dim_\Q A_*(\P_\lambda(V_j))$, 
which in turn yields 
$$ \sum_{j=1}^r\dim_\Q A_*(\P_\lambda(X^{H_j})) \geq \sum_{j=1}^r\dim_\Q A_*(\P_\lambda(V_j)).$$
Equality holds if and only if the  $\pi_j$'s 
induce isomorphisms on the Chow groups.  
%(Lemma \ref{min.size.chow.lem}). 
%(as they already induce surjective morphisms).   

\smallskip

\item Since each $\P_\lambda(V_j)$ in (3) is a weighted projective space, 
we get 
$$
\sum_{j=1}^r\dim_\Q A_*(\P_\lambda(V_j))=\sum_{j=1}^r\dim V_j=
\sum_{j=1}^r\dim X^{H_j},$$ 
where the last equality stems from the fact that each 
$\pi_j$ is finite and surjective. 

\smallskip

\item \label{sync.item} Because $x$ is an attractive fixed point, 
we have $$\sum_{j=1}^r\dim X^{H_j}\geq \dim X,$$
by \cite[Theorem 1.4]{bri:rat}. The 
equality holds if and only if there is a $T$-module $V$  
%with $0$ as an attractive fixed point, 
and a 
%there exists a 
$T$-equivariant finite surjective 
morphism $\pi:X\to V$ such that $\pi^{-1}(0)=\{x\}$. 
To show this, we follow closely the argument in \cite[proof of Theorem 1.2]{bri:rat}. 
%, where $V$ is a $T$-module 
%with zero as an attractive fixed point. 
%Indeed, 
First, assume that such a morphism $\pi$ exists. %$\pi:X\to V$ exists. %with such specifications exists. 
Let $H_1,\ldots, H_r$ be subtori as in (2). 
Consider $H_j$ and a point $y\in V^{H_j}$. %$H_j\subset T$, as in (2), 
Since $H_j$ is connected, %if $y\in V^{H_j}$, then 
it acts trivially on the (finite) fiber $\pi^{-1}(y)$. %is trivial. %, where $y\in V^{H_j}$. 
This implies that the induced $T$-equivariant map $\pi_j:X^{H_j}\to V^{H_j}$ is  
%$T$-equivariant, finite and 
finite and surjective. Hence, 
$\dim X=\dim V=\sum \dim V^{H_j}=\sum \dim X^{H_j}$. 
Conversely, if the equality $\dim X=\sum_j \dim X^{H_j}$ holds, 
then we can synchronize the 
maps   
$\pi_j$ from (3) as follows. %, cf. \cite[Proof of Theorem 1.2]{bri:rat}.  
%
%
%%%%
%
Given that $X^{H_j}$ is $T$-stable and closed in $X$, 
we can extend $\pi_j$ to an
equivariant morphism 
$\pi_j':X\to V_j$. 
Let $V$ denote the product of all the $V_j$, and let $\pi:X\to V$ 
be the product morphism. 
%Synchronizing efforts via the product map, we obtain an equivariant morphism
%$\pi:X\to V$
%where $V$ is the direct sum of the $V_j$, sum taken over all the $H_j$'s above. 
Then $p(x)=0$ and $V^T=\{0\}$, by construction.  
Notice that $x$, being an attractive fixed point, lies in the closure all the $T$-orbits in $X$. 
In particular, $x$ is contained in all the irreducible components of $\pi^{-1}(0)$ (i.e. $\pi^{-1}(0)$ 
is connected). We claim that the map $\pi$ is finite. 
Indeed, $\{x\}=\pi^{-1}(0)$, for otherwise, $\pi^{-1}(0)$ would contain a $T$-stable curve upon 
which $T$ acts through a non-trivial character \cite[Proposition A.4]{bri:rat}. 
But this is impossible, because $\pi$ restricts to a
finite morphism on each $X^{H_j}$.
Finally, recall that $\dim V =\sum_j V_j=\sum_j X_j=\dim X$, by construction.   
Thus  
the map $\pi$ is dominant, and hence surjective.  
%, and the latter equals $\dim X$. 
%%%

\end{enumerate}
\smallskip

Combining items (1) to (5), we obtain the 
chain of inequalities 
{\small
$$
\dim_\Q A_*(\P_\lambda(X))\geq \sum_{j=1}^r\dim_\Q A_*(\P_\lambda(X^{H_j}))\geq 
\sum_{j=1}^r\dim_\Q A_*(\P_\lambda(V_j))=\sum_{j=1}^r\dim X^{H_j}\geq \dim X.  
$$
}

%\smallskip

\begin{prop}\label{first.char.rcells.prop}
Let $X$ be an affine 
$T$-variety with an attractive fixed point $x$.  
Let $H_1,\ldots,H_r$ 
denote the finite list of all 
codimension one subtori satisfying $X^{H_j}\neq X^T$.  
%Choose $\lambda\in \sigma_x^0$. 
%Notation as above, if 
%The following are equivalent. 
\begin{enumerate}[(a)]
\item The following are equivalent. 
\smallskip 

\begin{enumerate}[(i)] 
\item $\dim X=\sum_{j=1}^r \dim X^{H_j}$. 
\smallskip 

\item There is a $T$-module $V$ and a $T$-equivariant 
finite surjective morphism $\pi:X\to V$ such that 
$\pi(x)=0$ and $V^T=\{0\}$. In particular, for all $1\leq j\leq r$, the restriction of $\pi$ to $X^{H_j}$, denoted $\pi_j$,   
induces a $T$-equivariant finite surjective morphism $\pi_j:X^{H_j}\to V_j$, where $V_j:=V^{H_j}$.  
\end{enumerate}
\smallskip

\item Let $\lambda\in \sigma_x^0$. If $\dim_\Q A_*(\P_\lambda(X))=\dim X$, 
then conditions (i) and (ii) of (a) hold. 
Moreover, there is a chain of equalities 
{\small
 $$
\dim_\Q A_*(\P_\lambda(X))=\sum_{j=1}^r\dim_\Q A_*(\P_\lambda(X^{H_j}))= 
\sum_{j=1}^r\dim_\Q A_*(\P_\lambda(V_j))=\sum_{j=1}^r\dim X^{H_j}=\dim X,   
$$
}
and the maps $\pi$ and $\pi_j$ from (a) %and (b) 
induce isomorphisms 
$$\pi_*:A_k(\P_\lambda(X))\xrightarrow{\sim} A_k(\P_\lambda(V)),$$ 
$${\pi_j}_*:A_k(\P_\lambda(X^{H_j}))\xrightarrow{\sim} A_k(\P_\lambda(V_j)),$$ 
for all $j$ and $k$. 
\end{enumerate}
\end{prop}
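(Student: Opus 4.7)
The plan is to read off both parts of the proposition directly from the chain of inequalities
\[
\dim_\Q A_*(\P_\lambda(X))\ \geq\ \sum_{j=1}^r\dim_\Q A_*(\P_\lambda(X^{H_j}))\ \geq\ \sum_{j=1}^r\dim_\Q A_*(\P_\lambda(V_j))\ =\ \sum_{j=1}^r\dim X^{H_j}\ \geq\ \dim X
\]
assembled in items (1)--(5) above, together with the ``only if'' direction already spelled out in item (5). All the real geometry has been isolated there; what remains is bookkeeping.

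For part (a), the implication (ii) $\Rightarrow$ (i) is immediate: given the equivariant finite surjection $\pi:X\to V$ with $\pi^{-1}(0)=\{x\}$ (as a set), the restriction $\pi_j:X^{H_j}\to V^{H_j}$ is finite and surjective because $H_j$ acts trivially on the finite fibers of $\pi$ over $V^{H_j}$, so $\dim X^{H_j}=\dim V^{H_j}$; summing and using $V^T=\{0\}$ gives $\sum_j \dim X^{H_j}=\sum_j\dim V_j=\dim V=\dim X$. The converse (i) $\Rightarrow$ (ii) is exactly the synchronization argument reproduced in item (5): start from the finite surjective $T$-equivariant $\pi_j:X^{H_j}\to V_j$ furnished by Lemma \ref{min.size.chow.lem} (applied with $\G=T/H_j$), extend each to an equivariant $\pi_j':X\to V_j$ using that $X^{H_j}$ is closed and $T$-stable, and take $\pi:X\to V:=\prod_j V_j$ to be the product. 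By construction $\pi(x)=0$ and $V^T=\{0\}$. The one point that needs care is finiteness of $\pi$: since $x$ is attractive it lies in the closure of every $T$-orbit in $X$, hence in every component of $\pi^{-1}(0)$; if this fiber were not reduced to $\{x\}$ it would contain a $T$-stable curve on which $T$ acts through a non-trivial character (\cite[Proposition A.4]{bri:rat}), contradicting that $\pi|_{X^{H_j}}=\pi_j$ is finite for each $j$. Then $\dim V=\sum_j\dim V_j=\sum_j\dim X^{H_j}=\dim X$ by hypothesis, so $\pi$ is dominant and therefore surjective. I expect this finiteness step to be the main obstacle, though it is essentially a quotation of Brion's argument.

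For part (b), if $\dim_\Q A_*(\P_\lambda(X))=\dim X$ then every inequality in the displayed chain is an equality. The rightmost equality is exactly condition (i) of part (a), so (ii) follows. The middle equality
\[
\sum_{j=1}^r \dim_\Q A_*(\P_\lambda(X^{H_j}))=\sum_{j=1}^r\dim_\Q A_*(\P_\lambda(V_j))
\]
combined with the surjectivity of each $\pi_{j*}$ (Lemma \ref{min.size.chow.lem}) forces $\pi_{j*}:A_k(\P_\lambda(X^{H_j}))\to A_k(\P_\lambda(V_j))$ to be an isomorphism in every degree. Similarly, $\pi_*:A_k(\P_\lambda(X))\to A_k(\P_\lambda(V))$ is surjective in every degree by Lemma \ref{min.size.chow.lem}, while $\dim_\Q A_*(\P_\lambda(V))=\dim V=\dim X=\dim_\Q A_*(\P_\lambda(X))$ because $\P_\lambda(V)$ is a weighted projective space, so surjectivity upgrades to an isomorphism.
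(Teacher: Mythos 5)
Your proof is correct and follows exactly the route the paper intends: the proposition is stated without a separate proof precisely because it is a distillation of items (1)--(5) of Section 2.4, with part (a) being the ``synchronization'' argument of item (5) and part (b) the observation that the hypothesis forces every inequality in the displayed chain to be an equality, upgrading the surjections of Lemma \ref{min.size.chow.lem} to isomorphisms. Nothing is missing.
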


%\smallskip

Put in perspective, this result is our motivation 
for the material in the next section.  
%introducing the 
%notion of algebraic rational cells.  
%as those attractive cells for which 

\section{Algebraic rational cells}
This section is devoted to the study of our main 
technical tool: algebraic rational cells. 
We thank M. Brion for leading us to the following definition. 

\begin{dfn}\label{rationalcell.dfn}
Let $X$ be an affine 
$T$-variety with an attractive fixed point $x$, and let 
$n=\dim X$. 
%Let $(X,x)$ be an attractive cell of dimension $n$.  
We say that $(X,x)$, or simply $X$, 
is an {\em algebraic rational cell}
if and only if, for some $\lambda \in \sigma_x^0$, we have   
%the Chow groups of $\P_\lambda(X)$ satisfy  
%$$A_*(\P(X))\simeq A_*(\P^{n-1}).$$
$$
A_k(\P_\lambda(X))=
\left\{ \begin{array}{ccl} \Q \;&         &{\rm if }\;\;\; 0 \leq k \leq n-1, \\
                            0 \;&         &{\rm otherwise.} \\
\end{array}
\right.
$$ 
We abbreviate this condition by writing 
$A_*(\P_\lambda(X))\simeq A_*(\P^{n-1}).$ 
%Quite often we say that $(X,x)$ is an algebraic rational cell, 
%to emphasize its unique attractive $T$-fixed point $x$. 
%with attractive fixed point $x$. 
\end{dfn}
Algebraic rational cells are 
such $T$-varieties for which 
%all inequalities 
%(1)-(5) from Section 2.4.1 become equalities (i.e.
Proposition \ref{first.char.rcells.prop} (b)
holds.   
In principle, %it seems that 
Definition \ref{rationalcell.dfn} depends 
on a particular choice of $\lambda\in \sigma_x^0$. 
But, as we shall see next, 
it is independent of $\lambda$:     
%the definition 
%renders 
if $A_*(\P_\lambda(X))\simeq A_*(\P^{n-1})$ holds for 
some $\lambda \in \sigma_x^0$, then it holds 
for all $\lambda\in \sigma_x^0$.

%Some consequences of this definition appear next. 

\begin{lem}\label{contractible.lem}
%Let $(X,x)$ be an attractive cell. 
Let $X$ be an affine 
$T$-variety with an attractive fixed point $x$, and let 
$n=\dim X$. 
Then $(X,x)$ is an algebraic rational cell if and only if  
%Then 
$$A_k(X)=\left\{ \begin{array}{ccl} \Q \;&\; {\rm if }&\; k=n \\
                                      0 \;&\; {\rm if }&\; k\neq n.\\
\end{array}
\right.
$$ 
In particular, if $(X,x)$ is an algebraic rational cell, then 
it is irreducible.  
%Moreover, $A^{\G}_*(X)\simeq A^{\G}_*(pt)$. 
\end{lem}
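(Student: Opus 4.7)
The plan is to pass to the $\G$-equivariant Chow groups of $X$, where $\G$ acts via the chosen $\lambda \in \sigma_x^0$, and carry out the computation via equivariant localization. Because $x$ is attractive and $\lambda \in \sigma_x^0$, one has $X^\G = \{x\}$, and the $\G$-action on $X \setminus \{x\}$ has only finite stabilizers, all orbits being one-dimensional. A standard rational Chow argument for actions with finite stabilizers (which are invisible with $\Q$-coefficients) yields
$$A^\G_i(X \setminus \{x\}) \;\cong\; A_{i-1}(\P_\lambda(X))$$
for every $i$, the shift by one reflecting $\dim \G = 1$. Combining this with the equivariant localization sequence of Theorem \ref{loc.chow.thm} gives a short exact sequence
$$0 \to A^\G_*(\{x\}) \xrightarrow{i_*} A^\G_*(X) \to A^\G_*(X \setminus \{x\}) \to 0$$
of graded $S$-modules; injectivity of $i_*$ follows since its kernel sits inside the torsion-free $S = \Q[t]$ yet is killed after inverting $t$.

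For the direction ($\Leftarrow$), assuming $A_*(X)$ is one-dimensional concentrated in degree $n$, Lemma \ref{dim.fix.set.lem} (applied with $T = \G$ and $X^\G = \{x\}$) forces $A^\G_*(X)$ to be a free $S$-module, and top-degree considerations identify it with $S\cdot [X]$, the free generator being in degree $n$. The class $[x] \in A^\G_0(X)$ is then a nonzero scalar multiple of $t^n[X]$---nonzero by Proposition \ref{attractive.prop}---so the short exact sequence identifies $A^\G_*(X\setminus\{x\})$ with the cyclic module $(S/t^n)\cdot[X]$, one-dimensional in each of the degrees $1,\ldots,n$. The isomorphism of the first paragraph then gives $A_k(\P_\lambda(X)) = \Q$ for $0\leq k\leq n-1$, i.e.\ $A_*(\P_\lambda(X)) \cong A_*(\P^{n-1})$.

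For ($\Rightarrow$), if $A_*(\P_\lambda(X)) \cong A_*(\P^{n-1})$ then $A^\G_i(X\setminus\{x\})$ is one-dimensional in degrees $1,\ldots,n$ and zero elsewhere. Together with $A^\G_i(\{x\}) = S_i$ (one-dimensional for $i\leq 0$, zero otherwise), the short exact sequence forces $A^\G_i(X)=\Q$ for $i\leq n$ and $0$ for $i>n$. Since $A^\G_*(X)$ is a finitely generated graded module of generic rank one over the graded PID $S$, it decomposes as $S(d_0)\oplus T$ with $T$ torsion; the requirement $A^\G_i(X)=0$ for $i>n$ forces $d_0\leq n$, while the non-vanishing of $A^\G_n(X)$ forces $d_0\geq n$, and then the prescribed dimension of one in each degree $i\leq n$ rules out any nonzero torsion summand. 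Hence $A^\G_*(X)\cong S(n)$, and Theorem \ref{Tequiv.thm} gives $A_*(X) = A^\G_*(X)/t\cdot A^\G_*(X)\cong\Q$ concentrated in degree $n$. Irreducibility then follows at once: the vanishing of $A_k(X)$ for $k<n$ precludes any lower-dimensional irreducible component, while $A_n(X)=\Q$ admits a unique top-dimensional one. The most delicate steps are the isomorphism $A^\G_i(X\setminus\{x\}) \cong A_{i-1}(\P_\lambda(X))$---for which one reduces rationally to the free-quotient case---together with the graded bookkeeping that excludes torsion in the converse direction.
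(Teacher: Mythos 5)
Your $(\Leftarrow)$ direction is sound and is a nice, more module-theoretic variant of the paper's argument: once Lemma \ref{dim.fix.set.lem} forces $A^{\G}_*(X)$ to be free of rank one with generator in degree $n$, and $[x]$ is identified with a nonzero multiple of $t^n$ times that generator, the localization sequence does give $A_*(\P_\lambda(X))\simeq A_*(\P^{n-1})$. The localization sequence itself and the identification $A^{\G}_i(X\setminus\{x\})\simeq A_{i-1}(\P_\lambda(X))$ are also justified correctly.

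The $(\Rightarrow)$ direction, however, has a genuine gap at the step ``the non-vanishing of $A^{\G}_n(X)$ forces $d_0\geq n$, and then the prescribed dimension of one in each degree $i\leq n$ rules out any nonzero torsion summand.'' The Hilbert function you have computed does not determine the module. Take $M=S(0)\oplus \bigl(S/(t^n)\bigr)(n)$, i.e.\ a free generator $u$ in degree $0$ and a torsion generator $v$ in degree $n$ killed by $t^n$: this $M$ has generic rank one, is one-dimensional in every degree $i\leq n$ and zero above $n$, contains a degree-zero element $u$ generating a free submodule whose quotient is concentrated in degrees $1,\dots,n$ (so it is compatible with the short exact sequence and with $[x]=u$), and yet $M/tM$ is two-dimensional. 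So nothing in your bookkeeping excludes $A_*(X)$ having dimension $2$; the degree-$n$ class can be supplied by torsion rather than by the free summand. To close the gap you need a geometric input beyond graded dimensions. The paper's proof supplies exactly this: the finite surjective $\G$-equivariant morphism $\pi:X\to\A^n$ with $\pi^{-1}(0)=\{x\}$ induces, by descent, a \emph{surjection} $\pi_*:A^{\G}_*(X)\to A^{\G}_*(\A^n)=S$, and a diagram chase between the two localization sequences (for $X$ and for $\A^n$) shows the middle map is an isomorphism precisely when the right-hand map $A_*(\P_\lambda(X))\to A_*(\P^{n-1})$ is; since the latter is a surjection of $n$-dimensional spaces under hypothesis (i), it is an isomorphism, whence $A^{\G}_*(X)\simeq S$ and $A_*(X)\simeq\Q$ by Theorem \ref{Tequiv.thm}. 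Equivalently, you could invoke Proposition \ref{first.char.rcells.prop}(b). Some version of this comparison with $\A^n$ (or with $V$) must be added to your argument; the abstract structure theory of graded $\Q[t]$-modules alone cannot do it.
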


\begin{proof} 
Let $\G$ act on $X$ via $\lambda$. 
Recall that we have a short exact sequence 
$$0\to A^{\G}_*(x)\to A^{\G}_*(X)\to A^{\G}_*(X\setminus \{x\})\to 0,$$
which stems from the localization theorem (Theorem \ref{loc.chow.thm}). 
%fact that $x$ is an attractive fixed point  
%(\cite[Proposition 4.1]{bri:eqchow}). 
As in Subsection 2.4, %in Lemma \ref{min.size.chow.lem},  
there exists a $\mathbb{G}_m$-equivariant finite surjective 
map $\pi:X\to \mathbb{A}^n$ such that $\pi^{-1}(0)=x$, and $\G$-acts on $\A^n$ with positive weights only.  
%(\cite[Proposition A3]{bri:rat}). 
This map induces the commutative diagram:
$$
\xymatrix{
0 \ar[r]& A^{\G}_*(x) \ar[r]^{i_*} \ar[d]^{\pi_*}& A^{\G}_*(X) \ar[r]^{j^*} \ar[d]^{\pi_*} &  A^{\G}_*(X\setminus\{x\}) \ar[d]^{\pi_*}  \ar[r]  & 0 \\
0 \ar[r]& A^{\G}_*(0) \ar[r]^{i^*} & A^{\G}_*(\A^n)\ar[r]^{j^*}& A^{\G}_*(\A^n\setminus\{0\}) \ar[r]  & 0.\\
}
$$
The left vertical map is clearly an isomorphism. 
We claim that the other two vertical maps are surjective. 
%Here is why. 
Indeed, since $\pi:X\to \A^n$ is finite and surjective,  %the left vertical map is clearly an isomorphism. 
%the middle vertical map %is surjective because  
%is induced by %corresponds to 
the induced map of mixed spaces  
$\pi:X_{\G} \to {\A}^n_{\G}$ inherits both properties, by descent \cite[Propositions 2 and 3]{eg:eqint}.
Hence, $\pi_*:A_*^{\G}(X)\to A^{\G}_*(\A^n)$ is surjective. 
%), for $\pi$ is so. 
%This implies that 
 %${\pi_{\G}}_*:A_*(X_{\G}) \to A_*({\A}^n_{\G})$ is surjective. But this is 
%the map 
%$\pi_*:A_*^{\G}(X)\to A^{\G}_*({\A}^n)$ is surjective. 
For the right vertical map, observe that $A^{\G}_*(X\setminus\{x\})\simeq A_*(\P(X))$ by \cite[Theorem 3]{eg:eqint}.  
So this map represents 
%As for the vertical map on the right, it represents 
$\pi_*:A_*(\P(X))\to A_*(\P^{n-1}),$ 
whose surjectivity is already known (Lemma \ref{min.size.chow.lem}). %hypothesis. 
%and it is therefore surjective. 
%because $A^{\G}_*(X\setminus\{x\})\simeq A_*(\P(X))$, 
%by \cite[Theorem 3]{eg:eqint}. 
%
%Since the left vertical map is already %clearly 
%an isomorphism, 
%It follows from %, and the fact that the left vertical map is an isomorphism, 
We conclude from 
the previous analysis 
that the right vertical map is an isomorphism 
if and only if so is the middle one.   
But 
the latter happens if and only if 
$A_*(X)\simeq A_*(\A^n)\simeq \Q$ (one direction is guaranteed by Theorem \ref{Tequiv.thm}; for the other one use Lemma \ref{dim.fix.set.lem}).
%(Corollary \ref{dim.fix.set.cor}). 
This yields the first assertion of the lemma. 

Finally, the second assertion follows from Lemma \ref{aux.lem} below. 
%For the second assertion, observe 
%that the condition $A_*(X)\simeq \Q$ 
%implies that $X$ has a unique  
%irreducible component, say $Y$, of dimension 
%$n=\dim X$. (Note that 
%all irreducible components 
%of $X$ contain the fixed point $x$.)  
%To finish the argument, 
%let us suppose, 
%by contradiction, 
%that $X$ has another irreducible 
%component of dimension $e<n$. 
%It suffices to 
%consider the case %when 
%$X=Y\cup Z$, with $Z$ 
%an irreducible component of dimension $e<n$.  
%Now the sequence 
%$$A_e(Y)\to A_e(X)\to A_e(Z\setminus Y)\to 0$$ 
%yields $A_e(X)\neq 0$, because $A_e(Z\setminus Y)=\Q$ 
%(as $Z\setminus Y$ is open and $Z$ is irreducible). 
%So $\dim_\Q A_*(X)\geq 2$. 
%But this is a contradiction. \qed %Thus $X=Y$.  
\end{proof}

\begin{lem}[\protect{\cite[Proposition 1.6]{eh:book}}\label{aux.lem}]
Let $X$ be a variety. Let $X_1,\ldots, X_m$ be 
the irreducible components of $X$. Then 
the classes $[X_i]\subset A_*(X)_\Z$ 
generate a free abelian subgroup of rank $m$ in $A_*(X)_\Z$. Here $A_*(X)_\Z$ denotes the integral Chow group of $X$. 
\end{lem}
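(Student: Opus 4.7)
The plan is to prove the linear independence of the classes $[X_i]$ by a localization argument: for each $i$, I produce a map to a Chow group where $[X_i]$ survives as a nonzero class while all $[X_j]$ with $j\neq i$ map to zero.

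More precisely, fix an index $i$ and set $U_i := X\setminus \bigcup_{j\neq i} X_j$. Since no irreducible component is contained in the union of the others, $U_i$ is a nonempty open subvariety of $X_i$, hence irreducible of dimension $d_i:=\dim X_i$. The standard localization exact sequence for the open immersion $U_i\hookrightarrow X$ yields a surjection
$$
\rho_i:A_*(X)_\Z \twoheadrightarrow A_*(U_i)_\Z,
$$
under which $[X_i]\mapsto [U_i]$ (its restriction to $U_i$) while $[X_j]\mapsto 0$ for all $j\neq i$, since $X_j\cap U_i=\emptyset$ by construction.

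Next I verify that $[U_i]\neq 0$ in $A_{d_i}(U_i)_\Z$. Indeed, the only $d_i$-dimensional subvariety of the irreducible variety $U_i$ is $U_i$ itself, and any rational equivalence relation in $A_{d_i}$ comes from a rational function on a $(d_i{+}1)$-dimensional subvariety; since $\dim U_i=d_i$, no such subvariety exists. Therefore $A_{d_i}(U_i)_\Z\cong \Z\cdot[U_i]$, and in particular $[U_i]$ has infinite order.

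Now assume a relation $\sum_{j=1}^m a_j[X_j]=0$ in $A_*(X)_\Z$ with $a_j\in\Z$. Applying $\rho_i$ kills every term except the $i$-th and produces $a_i[U_i]=0$ in $A_{d_i}(U_i)_\Z$, whence $a_i=0$. Since $i$ was arbitrary, the classes $[X_1],\ldots,[X_m]$ are $\Z$-linearly independent and generate a free abelian subgroup of rank $m$, as claimed. There is no real obstacle here; the only point requiring care is the verification that $[U_i]$ is a nonzero class, which is handled by the dimension count above.
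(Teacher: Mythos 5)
Your proof is correct, and it takes a genuinely different route from the paper's. You localize: for each $i$ you pass to the open subset $U_i=X\setminus\bigcup_{j\neq i}X_j$, which is a nonempty (since no component lies in the union of the others) irreducible open subset of $X_i$, and use the excision sequence $A_*(\bigcup_{j\neq i}X_j)_\Z\to A_*(X)_\Z\to A_*(U_i)_\Z\to 0$ together with the standard fact that $A_{d}(V)_\Z=\Z\cdot[V]$ for an irreducible $d$-dimensional variety $V$ (no $d$-dimensional subvarieties other than $V$, and no $(d{+}1)$-dimensional subvarieties to produce relations). This kills all $[X_j]$ with $j\neq i$ and detects the coefficient $a_i$, treating all components uniformly regardless of their dimensions. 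The paper instead reduces to the equidimensional case: it groups the components by dimension into equidimensional unions $Y_j$ (for which $A_j(Y_j)_\Z=\Z^{c_j}$ is quoted from \cite[Section 1.5]{f:int}), and then uses the exact sequence for a union of two closed subschemes \cite[Example 1.8.1]{f:int} plus the dimension bound $\dim(Y_j\cap Z_j)<j$ to split off $A_j(Y_j)$ as a direct summand of $A_j(X)$. Your argument is arguably more self-contained, needing only the localization sequence and the computation of the top Chow group of an irreducible variety; the paper's argument buys a slightly stronger-looking intermediate statement (each $A_j(Y_j)$ is a direct summand of $A_j(X)$) at the cost of invoking the Mayer--Vietoris-type sequence. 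Both are complete proofs.
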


\begin{proof}
If $X$ is equidimensional, then the result is well-known, see e.g. \cite[Section 1.5]{f:int}. 
If $X$ is not equidimensional, then we argue as follows. 
Let $n=\dim X$. For $0\leq j\leq n$, let $Y_j$ be the union 
%of all 
%irreducible components of 
of those $X_i$'s whose dimension is exactly $j$. 
%dimension $j$. 
By definition, $Y_j$ is equidimensional. If $Y_j\neq \emptyset$, then 
$A_j(Y_j)=\Z^{c_j}$, where $c_j$ is the number of $X_i$'s whose dimension is $j$. 
Moreover, if $j\neq k$, then $\dim Y_j\cap Y_k< \min\{j,k\}$. 
%(note that some $Y_j$ may be empty, but in that case the inequality is obvious). 
Now, for $0\leq j\leq n$, define $Z_j:=\bigcup_{i\neq j}Y_i$. Notice that $\dim Y_j\cap Z_j<j$. 
By \cite[Example 1.8.1]{f:int} we have an exact sequence 
$$A_j(Y_j\cap Z_j)\to A_j(Y_j)\oplus A_j(Z_j)\to A_j(X)\to 0.$$
But $A_j(Y_j\cap Z_j)=0$ because $\dim Y_j\cap Z_j<j$. 
Thus $A_j(Y_j)\oplus A_j(Z_j)\simeq A_j(X)$. 
So $\bigoplus_{j=0}^nA_j(Y_j)\subseteq A_*(X)$. But $\bigoplus_{j=0}^nA_j(Y_j)=\Z^{m}$, 
since 
$m=\sum_j c_j$. The proof is now complete. 
%write $X=X_0\cup\ldots X_\ell$, where $\ell=\dim X$.
\end{proof}

Lemma \ref{contractible.lem} hints to a more general structural property
of algebraic rational cells, with respect to the $T$-action.

\begin{prop}\label{char.ratcell.prop}
Let $X$ be an affine $T$-variety 
with an attractive fixed point $x$, and let $\lambda\in \sigma_x^0$. 
Let $n=\dim{X}$. Then the following conditions 
are equivalent.
 
\begin{enumerate}[(i)]
\item $A_*(\P_\lambda(X))\simeq A_*(\P^{n-1})$.
\smallskip
\item $A_*(X)\simeq A_*(\A^n)$. 
\smallskip
\item $A^T_*(X)\simeq A^T_*(pt)=S$. 
\smallskip
\item $A^T_*(\P_\lambda(X))\simeq A_*(\P^{n-1})\otimes_\Q S$
\end{enumerate}
\end{prop}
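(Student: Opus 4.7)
The plan is to dispose of (i) $\Leftrightarrow$ (ii) by quoting Lemma \ref{contractible.lem}, and then to derive the remaining equivalences in parallel by applying Lemma \ref{dim.fix.set.lem}, together with Theorem \ref{Tequiv.thm} and a Nakayama-type argument, twice: once to $X$ itself for (ii) $\Leftrightarrow$ (iii), and once to the quotient $\P_\lambda(X)$ for (i) $\Leftrightarrow$ (iv).

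For (ii) $\Leftrightarrow$ (iii), I observe that since $x$ is attractive it is the unique $T$-fixed point of $X$, so $A_*(X^T) \simeq \Q$. If (ii) holds, then $\dim_\Q A_*(X^T) = 1 = \dim_\Q A_*(X)$, so Lemma \ref{dim.fix.set.lem} forces $A^T_*(X)$ to be free of rank one over $S$. By Lemma \ref{aux.lem}, $X$ is irreducible, hence $[X] \in A^T_n(X)$ lifts a generator of the quotient $A^T_*(X)/\Delta A^T_*(X) = A_*(X)$; graded Nakayama then gives $A^T_*(X) = S \cdot [X] \simeq S$. Conversely, assuming (iii), Theorem \ref{Tequiv.thm} yields $A_*(X) \simeq S/\Delta S \simeq \Q$, and since $[X] \in A_n(X)$ is nonzero, this copy of $\Q$ sits in dimension $n$, so $A_*(X) \simeq A_*(\A^n)$.

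For (i) $\Leftrightarrow$ (iv), I run the analogous argument on $\P_\lambda(X)$. The chain of inequalities built in items (1)--(5) of Subsection 2.4 gives
\[
\dim_\Q A_*(\P_\lambda(X)) \;\geq\; \dim_\Q A_*(\P_\lambda(X)^T) \;\geq\; n.
\]
Condition (i) makes the left-hand side equal to $n$, collapsing both inequalities, so Lemma \ref{dim.fix.set.lem} makes $A^T_*(\P_\lambda(X))$ free of rank $n$ over $S$. Lifting a $\Q$-basis of $A_*(\P_\lambda(X)) \simeq A_*(\P^{n-1})$ to homogeneous classes in dimensions $0, 1, \ldots, n-1$ and invoking graded Nakayama produces a homogeneous $S$-basis in the same dimensions, which matches $A_*(\P^{n-1}) \otimes_\Q S$ degreewise, giving the graded isomorphism in (iv). The converse (iv) $\Rightarrow$ (i) is immediate from Theorem \ref{Tequiv.thm} by reducing modulo $\Delta$.

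The main subtlety will be ensuring that the Nakayama lift yields the claimed \emph{graded} isomorphisms in (iii) and (iv), rather than merely ungraded ones; this is handled by the observation that a homogeneous lift of a basis of $A_*/\Delta A^T_*$ automatically lies in the correct dimensions, at which point the freeness conclusion of Lemma \ref{dim.fix.set.lem} pins down the graded structure.
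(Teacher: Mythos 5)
Your proposal is correct, and for three of the four equivalences it coincides with the paper's argument: (i) $\Leftrightarrow$ (ii) via Lemma \ref{contractible.lem}, (ii) $\Leftrightarrow$ (iii) via Lemma \ref{dim.fix.set.lem} (your elaboration, using $X^T=\{x\}$ and Lemma \ref{aux.lem} to place the single copy of $\Q$ in degree $n$, is exactly what the paper leaves implicit), and (iv) $\Rightarrow$ (i) by reducing modulo $\Delta$. Where you genuinely diverge is (i) $\Rightarrow$ (iv). The paper routes this through the finite surjective $T$-equivariant morphism $\pi:X\to V$ of Proposition \ref{first.char.rcells.prop}(b): it lifts a basis of $A_*(\P_\lambda(X))$ to a generating system of $A^T_*(\P_\lambda(X))$ and certifies that it is a basis by pushing it forward under $\widetilde{\pi_*}$ into the known free module $A^T_*(\P_\lambda(V))\simeq A_*(\P^{n-1})\otimes_\Q S$. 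You instead collapse the chain of inequalities of Subsection 2.4 and apply the freeness criterion of Lemma \ref{dim.fix.set.lem} directly to $\P_\lambda(X)$, obtaining a free rank-$n$ module without ever comparing with $V$ at the equivariant level. Both arguments rest on the same inputs (the inequalities (1)--(5) and graded Nakayama); yours is slightly more self-contained, while the paper's makes the identification with $A^T_*(\P_\lambda(V))$ explicit, which is reused elsewhere. Your closing remark about gradings is the right point to flag, and your resolution is sound: a homogeneous lift of a class in $A_i$ lands in $A^T_i$ because $\Delta$ acts by maps of degree $-1$, and $n$ homogeneous generators of a free rank-$n$ module over the domain $S$ necessarily form a basis.
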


\begin{proof}
The equivalence 
(i) $\Leftrightarrow$ (ii) follows from 
Lemma \ref{contractible.lem}. 

The equivalence (ii) $\Leftrightarrow$ (iii) follows from 
Lemma \ref{dim.fix.set.lem}.

The implication (iv) $\Rightarrow$ (i) 
is deduced from Theorem \ref{Tequiv.thm} and   
Lemma \ref{min.size.chow.lem}.   

Finally, we dispose of the direction (i) $\Rightarrow$ (iv). 
Recall that (i) yields the existence of a 
$T$-equivariant finite surjective 
morphism $\pi:X\to V$, such that the induced map   
$\pi_*:A_*(\P_\lambda(X))\to A_*(\P_\lambda(V))$ 
is an isomorphism (Proposition \ref{first.char.rcells.prop} (b)).  
By the graded Nakayama lemma,  
the corresponding map 
$\widetilde{\pi_*}:
A^T_*(\P_\lambda(X))\to A^T_*(\P_\lambda(V))$ is 
surjective. 
%To avoid confusion, we denote this map 
%by $\widetilde{\pi_*}$. 
We claim that 
$\widetilde{\pi_*}$ is also injective 
(hence an isomorphism). Indeed, 
choose a basis $z_1,..,z_n$ of $A_*(\P_\lambda(V))$. Now 
identify that basis with a basis of $A_*(\P_\lambda(X))$, 
via $\pi_*$, 
and lift it to a generating system of 
the $S$-module $A^T_*(\P_\lambda(X))$. 
This generating system is a basis, since its image
under $\widetilde{\pi_*}$ is a basis of $A^T_*(\P_\lambda(V))$. 
%We are done.    
\end{proof}

%\smallskip

Next, we exhibit some additional features 
of algebraic rational cells. The result is 
an algebraic counterpart of \cite[Theorem 18]{bri:ech}. 

\begin{thm}\label{char.thm}
Let $X$ be an irreducible affine $T$-variety with 
an attractive fixed point $x$. Then the following are equivalent. 
%If $(X,x)$ 
%is an algebraic rational cell, then 
%the following equivalent conditions hold:
\begin{enumerate}[(i)]
\item For each subtorus $H\subset T$ of codimension one,   
$(X^{H},x)$ is an algebraic rational cell,  
and $\dim{X}=\sum_{H}\dim {X^{H}}$ 
(sum over all subtori of codimension one). 

\item For each subtorus $H\subset T$ of codimension one, 
$(X^{H},x)$ is an algebraic rational cell, and  
$$e_x[X]=d\prod_{H}e_x[X^{H}],$$ 
where $d$ is a positive  
rational number. 
If moreover each $X^{H}$ is smooth, then $d$ is an integer.   
\end{enumerate}  
Furthermore, if $(X,x)$ is an algebraic rational cell, then conditions (i) and (ii) hold. 
\end{thm}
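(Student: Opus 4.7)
My plan is to exploit the characterization of algebraic rational cells via finite surjective equivariant morphisms to $T$-modules (Proposition \ref{first.char.rcells.prop}) combined with the multiplicativity of equivariant multiplicities under such morphisms (Remark \ref{brion.arabia.map.rem}). For each codimension-one subtorus $H_j$ with $X^{H_j} \neq X^T$, if $(X^{H_j}, x)$ is an algebraic rational cell, Proposition \ref{first.char.rcells.prop} produces a finite surjective $T$-equivariant morphism $\pi_j : X^{H_j} \to V_j$ with $V_j$ a $T$-module satisfying $V_j^T = \{0\}$, $\pi_j^{-1}(0) = \{x\}$, and $e_x[X^{H_j}] = \deg(\pi_j) \cdot e_0[V_j]$. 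Assuming (i), the dimension identity $\dim X = \sum_j \dim X^{H_j}$ triggers the synchronization procedure of item (5) in Section 2.4, which lifts the $\pi_j$'s to a single finite surjective $T$-equivariant morphism $\pi : X \to V := \bigoplus_j V_j$ with $\pi^{-1}(0) = \{x\}$. Since $V_j = V^{H_j}$ by construction, the weights of $V$ partition according to their kernels and $e_0[V] = \prod_j e_0[V_j]$; combining with $e_x[X] = \deg(\pi) \cdot e_0[V]$ yields (ii) with $d = \deg(\pi)/\prod_j \deg(\pi_j) \in \Q_{>0}$.

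For the converse (ii) $\Rightarrow$ (i), only the dimension equality needs to be extracted. By Theorem \ref{eqmult.prop.thm}(ii) the function $e_x[Z]$ is homogeneous of degree $-\dim Z$ (non-zero by Proposition \ref{attractive.prop}); since $d$ is a scalar, comparing degrees in $e_x[X] = d \prod_j e_x[X^{H_j}]$ forces $\dim X = \sum_j \dim X^{H_j}$. For the smooth refinement, assume each $X^{H_j}$ is smooth. By Lemma \ref{contractible.lem} each $X^{H_j}$ is irreducible; Theorem \ref{at.fx.p.thm} provides a closed $T$-equivariant embedding $X^{H_j} \hookrightarrow T_x X^{H_j}$, which smoothness and the equality $\dim X^{H_j} = \dim T_x X^{H_j}$ upgrade to an isomorphism. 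We may then take $V_j = T_x X^{H_j}$ with $\pi_j = \mathrm{id}$, so $\deg(\pi_j) = 1$ and $d = \deg(\pi)$ is a positive integer.

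The final assertion is obtained by specializing Proposition \ref{first.char.rcells.prop}(b) directly to $X$: it supplies both the dimension equality $\dim X = \sum_j \dim X^{H_j}$ and the isomorphisms $(\pi_j)_* : A_*(\P_\lambda(X^{H_j})) \xrightarrow{\sim} A_*(\P_\lambda(V_j))$. Since $V_j$ is a $T$-module acted on positively by $\lambda$, $\P_\lambda(V_j)$ is a weighted projective space with $A_*(\P_\lambda(V_j)) \simeq A_*(\P^{\dim V_j - 1})$; hence $A_*(\P_\lambda(X^{H_j})) \simeq A_*(\P^{\dim X^{H_j} - 1})$, so each $(X^{H_j}, x)$ is itself an algebraic rational cell. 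Thus (i) holds, and (ii) follows. The step I expect to be the main obstacle is the synchronization in (i) $\Rightarrow$ (ii): building the global $\pi$ and verifying its finiteness requires the full argument of item (5) in Section 2.4, whose delicate point is the use of attractivity (via \cite[Proposition A.4]{bri:rat}) to rule out positive-dimensional $T$-stable components in $\pi^{-1}(0)$.
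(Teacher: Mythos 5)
Your proof is correct and follows essentially the same route as the paper: Proposition \ref{first.char.rcells.prop}(a) supplies the synchronized finite surjective map $\pi:X\to V$, multiplicativity of $e_0$ on the $T$-module $V$ (Theorem \ref{eqmult.prop.thm}(iii)) yields (ii), the degree comparison via Theorem \ref{eqmult.prop.thm}(ii) together with irreducibility gives the converse, and Proposition \ref{first.char.rcells.prop}(b) gives the final assertion. The only difference is that you spell out the synchronization step and the smooth refinement ($\deg\pi_j=1$, so $d=\deg\pi\in\Z$), which the paper leaves implicit by citing the argument of Brion.
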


\begin{proof}
%We only need to show that, in the situation at hand,  
%the equality 
%$\dim X=\sum_{H_j}\dim X^{H_j}$ 
%is equivalent to 
%$e_x[X]=d\prod_{H_j}e_x[X^{H_j}]$. 
%Nevertheless, 
Recall that there is only 
a finite collection of codimension one 
subtori, say $H_1,\ldots, H_r$, for which 
$X^{H_j}\neq X^T$. 
%Now, in light of %Subsection 2.4.1 and 
%Proposition \ref{first.char.rcells.prop}, 
The required equivalence is obtained  
arguing exactly as in \cite[Theorem 18]{bri:ech}. 
Indeed, if (i) holds, then there exists a $T$-equivariant 
finite surjective map 
$\pi:X\to V$, where $V$ is a $T$-module   
(by Proposition \ref{first.char.rcells.prop} (a)).  %item (\ref{sync.item}) in page \pageref{sync.item}). 
So $e_x[X]=de_0[V]$, where $d=\deg{\pi}$. 
But then $de_0[V]=d\prod_{H_j}e_0[V_j]$, 
because $V$ is a $T$-module 
(Theorem \ref{eqmult.prop.thm} (iii)). 
In turn, the last expression identifies to 
%$d\prod_{H_j}e_0[V_j]=$ 
$\frac{d}{\prod_j d_j}\prod_{j} e_x[X^{H_j}]$, 
where 
$d_j=\deg{\pi_j}$  
and $\pi_j$ is as in Proposition \ref{first.char.rcells.prop} (a).   %item (\ref{sync.item}) in page \pageref{sync.item}.  %Proposition \ref{first.char.rcells.prop}. 
Condition (ii) is thus attained. Conversely, if (ii) holds, 
then the $X^{H_j}$'s are irreducible 
(Lemma \ref{contractible.lem}). As $X$ is irreducible by assumption,  
the equality 
$e_x[X]=d\prod_{H_j}e_x[X^{H_j}]$ 
yields  
$\dim X=\sum_{H_j} X^{H_j}$ by Theorem \ref{eqmult.prop.thm} (ii). 

Finally, if $(X,x)$ is an algebraic rational cell, 
then condition (i) is deduced at once from   
%
%the statement about $(X^{H_j},x)$ follows 
%from 
Proposition \ref{first.char.rcells.prop} (b). 
\end{proof}

In general, it is not true that 
properties (i) or (ii) of Theorem \ref{char.thm} 
characterize algebraic rational cells.  
%imply that an attractive cell 
%$(X,x)$ is an algebraic rational cell.  
Here is an example, cf. \cite[Remark 1.4]{bri:rat}. 

\begin{ex}
Let $X$ be the hypersurface of $\A^5$ with equation $x^2+yz+xtw=0$. 
Note that $X$ is irreducible, with singular locus 
$x=y=z=tw=0$, a union of two lines meeting at the origin. 
Now consider the $\G\times \G$-action on $\A^5$ given by 
$(u,v)\cdot (x,y,z,t,w):=(u^2v^2x,u^3vy,uv^3z,u^2t,v^2w)$. Then the origin of 
$\A^5$ is an attractive fixed point, $X$ is $T$-stable of dimension four and 
$X$ contains four closed irreducible $T$-stable curves, namely, the coordinate 
lines except for the $x$-axis. 
So $X$ satisfies condition (i) of Theorem \ref{char.thm}. 
Nevertheless, 
$(X,0)$ 
is not an algebraic rational cell. 
To see this, consider the $\G$-action on $\A^5$ given by 
$u\cdot (x,y,z,t,w):=(x,uy,u^{-1}z,t,w)$. Then $X$ is $\G$-stable 
and $X^{\G}$ is defined by $y=z=x^2+xtw=0$. Thus $X^{\G}$ is reducible 
at the origin. In fact $A_*(X^{\G})=\Q\oplus \Q$ (since $X^{\G}$ consists of the 
union of two copies of $\A^2$).  
%$\A^2_{(w,t)}$ intersecting at the union of the coordinate axes). 
Thus $\dim_\Q A_*(X^{\G})=2$, and so    
%Hence we should have 
$\dim_\Q A_*(X)\geq 2$, by Lemma \ref{dim.fix.set.lem}. 
%This fact implies that 
Therefore, in view of 
Lemma \ref{contractible.lem}, 
$(X,0)$ is not an algebraic rational cell.
\end{ex}

%Also notice that in the previous example $\P(X)$ is not $\Q$-filtrable. 
%Indeed, consider the one-parameter subgroup $u=v$. Then the quotient 
%$\P(X)$ has four fixed points. By looking at the chart $y=1$ we get that 
%the associated fixed point though might be rationally smooth, its complement 
%consists of two copies of $\A^2$, and so $A_2(\P(X))=\Q^2$ (as $\P(X)$ 
%is projective, we can filter it).

%In the presence of only finitely many $T$-invariant curves, 
%a condition that yields an equivalence of Chow rational smoothness with 
%the assertions (i)-(iii) above is the following: $\P(X_x)$ is $\Q$-filtrable 
%and $\dim_xX$ equals $n(X,x)$, the number of closed irreducible $T$-invariant curves 
%through $x$. Indeed, 
%$\Q$-filtrability implies that $\dim A_*(\P(X))=n(X,x)=\dim_x(X)$. So $A_*(\P(X))$ 
%equals $\Q^{\dim X}$.   

\begin{ex}[Smooth rational cells]
Let $X$ be an affine $T$-variety with an attractive fixed point $x$. If $X$ is smooth at $x$, then $X\simeq T_xX$, 
$T$-equivariantly. Thus $(X,x)\simeq (T_xX,0)$ 
is an algebraic rational cell. This agrees with 
the fact that $\P_\lambda(T_xX)$ is a weighted 
projective space. 
\end{ex}

\begin{rem}
Let $\k=\C$. In general, there seems to be no immediate 
relation between algebraic rational cells and 
(topological) rational cells \cite{go:cells}.  
%Indeed, if $(X,x)$ is an algebraic 
%rational cell, then using Proposition \ref{first.char.rcells.prop} one checks 
%that the cycle map 
%$cl:A_*(\P_\lambda(X))\to H_{2*}(\P_\lambda(X))$
%is injective. But then $H_{odd}(\P_\lambda(X))$ could be 
%non-zero. Conversely, if $(X,x)$ is a rational cell, then
%$\P_\lambda(X)$ is a rational cohomology complex 
%projective space, and 
%one checks that 
%the cycle map is surjective. 
Nevertheless, in some important cases, e.g. spherical varieties, %however, 
strong connections exist, cf. Sections 5 and 6.  
%cf. Corollary \ref{rsm.mon.are.alg.cor}, Theorem \ref{ratsm.emb.are.qfilt.thm} 
%and Theorem \ref{sp_rs_impl_crs}. 
\end{rem}

We conclude this section by computing equivariant 
mutiplicities of algebraic rational cells. 
Recall that a primitive character $\chi$ of $T$ is called {\em singular} 
if $X^{\ker{(\chi)}}\neq X^T$. 

\begin{cor}\label{inv.poly.cor} \label{gkm.cells.cor}
Let $X$ be an irreducible $T$-variety with attractive fixed point $x$. 
Let $X_x$ be the unique open affine $T$-stable neighborhood of $x$. 
If $(X_x,x)$ is an algebraic rational cell, then the 
following hold: 
\begin{enumerate}[(i)]
\item %If $(X_x,x)$ is an algebraic rational cell, 
%then 
$e_x[X]$ is the inverse of a polynomial. In fact, 
%it is of the form 
$$e_x[X]=\frac{d}{\chi_1 \cdots \chi_{r}},$$ 
where the $\chi_i$'s are singular characters, $r=\dim X$,  
and $d$ is a positive rational number.      
%
%
%
%\begin{cor} \label{gkm.cells.cor}
%Let $X$ be a $T$-variety 
%with attractive fixed point $x$, 

\item Additionally, if  
the number of closed irreducible 
$T$-stable curves through $x$ is finite, 
say $\ell(x)$,  
%Let $X_x$ 
%be the unique open affine $T$-stable neighborhood 
%of $x$. 
%If $(X_x,x)$ is an algebraic rational cell, 
then $\dim X=\ell(x)$. Furthermore, 
we may take for $\chi_1,\ldots, \chi_r$ 
%if  $X$ 
%is irreducible, and 
%if $\chi_1,\ldots,\chi_{\ell(x)}$ 
%are  
the characters associated to these curves. 
%the closed irreducible 
%$T$-invariant curves through $x$
%, then  
%$$
%e_x[X]=\frac{d'}{\chi_1\cdots \chi_{\ell(x)}}, 
%$$
%where $d'$ is a positive rational number. 
\end{enumerate} 
\end{cor}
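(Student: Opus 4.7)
The plan is to reduce to the affine local model and then exploit Theorem \ref{char.thm}. Since $e_x[X]$ depends only on $(X_x,x)$, I would replace $X$ by $X_x$ and assume throughout that $X$ is an affine algebraic rational cell with attractive fixed point $x$. Let $H_1,\ldots,H_s$ be the codimension-one subtori satisfying $X^{H_j}\neq X^T=\{x\}$, let $\chi_{(j)}\in\Delta$ be the primitive character of $T$ with kernel $H_j$ normalized to be positive on $\sigma_x^0$, and set $n_j=\dim X^{H_j}$. Theorem \ref{char.thm} then gives three inputs: each $(X^{H_j},x)$ is again an algebraic rational cell, $\sum_{j}n_j=\dim X$, and $e_x[X]=d\prod_{j=1}^s e_x[X^{H_j}]$ for some positive rational $d$.

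For part (i), I would compute each factor $e_x[X^{H_j}]$ separately. The $T$-action on $X^{H_j}$ factors through $T/H_j\cong\G$, so every weight of $T$ on $T_x X^{H_j}$ is a positive integer multiple of $\chi_{(j)}$. By Theorem \ref{eqmult.prop.thm}(ii), $e_x[X^{H_j}]$ is a homogeneous rational function of degree $-n_j$ in those weights, hence a scalar multiple of $\chi_{(j)}^{-n_j}$; write it as $c_j/\chi_{(j)}^{n_j}$, with $c_j>0$ by Proposition \ref{attractive.prop}. Substituting yields $e_x[X]=d'/(\chi_{(1)}^{n_1}\cdots\chi_{(s)}^{n_s})$ with $d'>0$, and listing each $\chi_{(j)}$ with multiplicity $n_j$ produces exactly $r=\dim X$ singular characters in the denominator.

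For part (ii), every closed irreducible $T$-stable curve $C$ through $x$ is pointwise fixed by $\ker(\chi_C)$, where $\chi_C$ is the $T$-weight on $T_xC$, so $C$ lies in a unique $X^{H_j}$ and $\chi_C$ is a positive multiple of $\chi_{(j)}$. Writing $\ell(x)=\sum_j\ell_j(x)$, with $\ell_j(x)$ the number of $\G$-stable curves through $x$ in $X^{H_j}$, I would use the finite surjective $\G$-equivariant morphism $\pi_j:X^{H_j}\to V_j$ from Proposition \ref{first.char.rcells.prop}(a). Finiteness of $\ell_j(x)$ forces the $\G$-weights on $V_j$ to be pairwise distinct, since a repeated weight would give a two-dimensional $\G$-isotypic summand with infinitely many $\G$-invariant lines through $0$, pulling back by finiteness of $\pi_j$ to infinitely many $\G$-stable curves in $X^{H_j}$. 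Hence $V_j$ carries exactly $n_j$ $\G$-stable axes; once $\ell_j(x)=n_j$ is established, summing over $j$ yields $\ell(x)=\sum n_j=\dim X$, and the $\chi_{(j)}$'s then serve as the list $\chi_1,\ldots,\chi_r$ of part (i).

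The main obstacle is the equality $\ell_j(x)=n_j$. One direction, $\ell_j(x)\geq n_j$, is immediate from the $n_j$ axes of $V_j$ and surjectivity of $\pi_j$. The reverse direction requires a stronger use of the algebraic rational cell condition, most naturally by matching the identity $e_x[X^{H_j}]=c_j/\chi_{(j)}^{n_j}$ from part (i) against a sum-over-curves expression for the $\G$-equivariant multiplicity at the attractive fixed point, or alternatively by showing, via the Chow-theoretic triviality of $X^{H_j}$ together with its irreducibility, that each $\G$-axis in $V_j$ has a unique irreducible component through $x$ in its $\pi_j$-preimage.
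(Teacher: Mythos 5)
Your part (i) is correct and matches the paper's route: the paper derives (i) directly from Theorem \ref{char.thm} and its proof (the finite surjection $\pi:X_x\to V=\prod_j V_j$ gives $e_x[X]=d\,e_0[V]$), which is the same factorization $e_x[X]=d\prod_j e_x[X^{H_j}]$ with $e_x[X^{H_j}]=c_j\,\chi_{(j)}^{-n_j}$ that you carry out.

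Part (ii), however, has a genuine gap, and the framing around the equality $\ell_j(x)=n_j$ points in the wrong direction. You never prove $\ell_j(x)\le n_j$ --- you explicitly flag it as ``the main obstacle'' and only sketch two strategies --- and neither strategy can work as stated, because the closed irreducible $T$-stable curves through $x$ inside $X^{H_j}$ are \emph{not} just the components of the $\pi_j$-preimages of the coordinate axes of $V_j$. Since $T$ acts on $X^{H_j}$ through the one-dimensional quotient $T/H_j$, every point $y\in X^{H_j}\setminus\{x\}$ has a one-dimensional orbit whose closure is a closed irreducible $T$-stable curve through the attractive point $x$. Hence if some $n_j=\dim X^{H_j}\ge 2$, then $\ell_j(x)$ is automatically infinite: already $V_j=\k^2$ with the distinct positive weights $\chi_{(j)},2\chi_{(j)}$ contains the infinite family of stable curves $\overline{T\cdot(a,b)}$ with $ab\ne 0$ through the origin. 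So the inequality $\ell_j(x)\le n_j$ is simply false for $n_j\ge 2$, and distinctness of the weights of $V_j$ does not rescue it --- you are counting invariant \emph{curves}, not invariant \emph{lines}. The correct use of the finiteness hypothesis is this observation run backwards: finiteness of $\ell(x)$ forces $n_j=1$ for every $j$, so each $X^{H_j}$ is itself an irreducible $T$-stable curve through $x$ (irreducible by Lemma \ref{contractible.lem}, since it is an algebraic rational cell) and is the unique such curve with generic stabilizer $H_j$. Then $\ell(x)=\#\{H_j\}=\sum_j n_j=\dim X$ by Theorem \ref{char.thm}, and the $\chi_{(j)}$'s, which are up to positive multiples the characters of the curves $X^{H_j}$, are exactly the denominators produced in part (i). This is the adaptation of \cite[Corollary 1.4.2]{bri:rat} and \cite[Corollary 5.6]{go:cells} that the paper's proof invokes.
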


\begin{proof}
Replacing $X$ by $X_x$ we may assume that $X$ is affine.
%
%We may replace $X$ by $X_x$. 
Then (i) 
follows at once from Theorem \ref{char.thm} and its proof.  
%, 
%keeping in mind that a codimension one subtorus of $T$ 
%is the kernel of a primitive character $\chi$ (that is, 
%$\chi$ is not divisible in $\Delta$), and this character 
%is uniquely determined up to sign.    
As for (ii) simply use 
Theorem \ref{char.thm} and Corollary \ref{inv.poly.cor}, 
to adapt the argument of 
\cite[Corollary 1.4.2]{bri:rat} 
and 
\cite[Corollary 5.6]{go:cells}.  
%easily 
%adapt to our situation, yielding the result. 
%, the proof is a straightforward 
%adaption of the proofs of 
%
%
\end{proof}

In general, if $X$ is an affine $T$-variety with 
attractive fixed point $x$, and $\ell(x)$ as above 
is finite, then  
$\dim_x X\leq \ell(x)$ \cite[Corollary 1.4.2]{bri:rat}.

\section{$\Q$-filtrable varieties and equivariant Chow groups}

%Here we show that algebraic rational cells are 
%a good substitute for 
%the notion of affine space in the study 
%of equivariant Chow groups of singular varieties. %(Theorem \ref{eqforfiltration.thm}).
We aim at an inductive description of the equivariant Chow groups 
of filtrable $T$-varieties in the case when the cells are all algebraic rational cells.
Our findings provide purely algebraic analogues of the topological results of \cite{go:cells}.

\begin{dfn}\label{qfiltrable.def}
Let $X$ be a $T$-variety. %equipped with a $T$-action. 
We say that $X$ is {\em $\Q$-filtrable} if 
the following hold: 
\begin{enumerate}
% \item X is normal,
 \item the fixed point set $X^T$ is finite, and
\smallskip
 \item there exists a generic one-parameter subgroup 
$\lambda:\G\to T$ for which the associated $BB$-decomposition
of $X$ is filtrable (Definition \ref{filtable.dfn}) 
and consists of $T$-invariant {\em algebraic rational cells}. 
\end{enumerate}  
%If, moreover, 
%the algebraic rational cells are isomorphic to affine spaces, 
%then $X$ is called {\bf $T$-cellular}.  
\end{dfn}
In particular, $X=\bigsqcup_j X_+(x_j,\lambda)$. 
Also, observe that the fixed points $x_j\in X^T$  
need not be attractive in $X$, 
but they are so in their corresponding %particular 
algebraic rational cells 
$X_+(x_j,\lambda)$. %, \ldots, X_+(x_m,\lambda)$.  
The following technical result 
will be of importance in the sequel.

\begin{lem}\label{cell.inj.lem}
If $(X,x)$ is an algebraic rational cell, then  
the equivariant 
multiplicity morphism $e_{X,x}:A^T_*(X)\to \mathcal{Q}$ 
is injective. 
\end{lem}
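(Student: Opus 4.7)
The plan is to combine the $S$-module isomorphism $A^T_*(X) \simeq S$ provided by Proposition \ref{char.ratcell.prop}(iii) with the localization formula for equivariant multiplicities.

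First I would verify that $X^T=\{x\}$. Since $x$ is attractive, we have $X=X_+(x,\lambda)$ for any $\lambda\in\sigma_x^0$ (see the discussion preceding Proposition \ref{attractive.prop}). In particular, every $y\in X$ satisfies $\lim_{t\to 0}\lambda(t)y=x$, and so any $T$-fixed point $y\in X$ must coincide with $x$. Moreover, $x$ is nondegenerate, because attractiveness forces all the weights of $T_xX$ to lie in an open half-space of $\Delta_\R$ and hence to be non-zero.

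Next I would invoke Proposition \ref{eqmult.loc.prop}: since every $T$-fixed point of $X$ is nondegenerate, for any $\alpha \in A^T_*(X)$ one has the identity
$$
\alpha \,=\, e_{x,X}(\alpha)\,[x]
$$
in $A^T_*(X)\otimes_S \mathcal{Q}$. Consequently, if $e_{x,X}(\alpha)=0$, then the class of $\alpha$ vanishes after tensoring with $\mathcal{Q}$, i.e.\ $\alpha$ is $S$-torsion.

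Finally, Proposition \ref{char.ratcell.prop}(iii) asserts that $A^T_*(X)\simeq S$ as $S$-modules, so $A^T_*(X)$ is free and therefore torsion-free. Combined with the previous step, this forces $\alpha=0$, which yields injectivity of $e_{x,X}$.

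There is no real obstacle here, as every ingredient has been assembled earlier in the section; the argument is essentially a direct application of the two quoted results. The only point requiring care is the verification that $X^T=\{x\}$, which is a one-line consequence of attractiveness.
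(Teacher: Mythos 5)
Your proof is correct, and it reaches the same crux as the paper's: everything reduces to the freeness of $A^T_*(X)$ as an $S$-module, which both you and the author pull from Proposition \ref{char.ratcell.prop}. The difference is in how you show that $e_{x,X}(\alpha)=0$ forces $\alpha$ to be $S$-torsion. The paper unwinds the construction of $e_{x,X}$ directly: by \cite[Proposition 4.1]{bri:eqchow} the pushforward $i_*:A^T_*(x)\to A^T_*(X)$ is injective with image containing $\chi_1\cdots\chi_n\,A^T_*(X)$, so $e_{x}(\alpha)=\beta/(\chi_1\cdots\chi_n)$ with $i_*(\beta)=\chi_1\cdots\chi_n\,\alpha$, and $\beta=0$ immediately gives $\chi_1\cdots\chi_n\,\alpha=0$. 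You instead quote the localization formula of Proposition \ref{eqmult.loc.prop} wholesale, which obliges you to first check that $X^T=\{x\}$ and that $x$ is nondegenerate — both of which you verify correctly from attractiveness, and neither of which the paper's argument needs to make explicit. Your route is a clean black-box application of two earlier results; the paper's is marginally more self-contained since it avoids invoking the localization formula and only uses the definition of the equivariant multiplicity. There is no gap in either.
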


\begin{proof}
%It follows from 
By \cite[Proposition 4.1]{bri:eqchow} 
the map $i_*:A^T_*(x)\to A^T_*(X)$
is injective. Moreover, the image of $i_*$ 
contains $\chi_1\cdots \chi_n A^T_*(X)$, 
where $\chi_i$ are the $T$-weights of $T_xX$. 
Next, recall that $e_x$ is defined as follows: 
given $\alpha\in A^T_*(X)$, we can form the product $\chi_1\cdots \chi_n\alpha$. Thus, 
there exists $\beta \in S$ such that $i_*(\beta)=\chi_1\cdots \chi_n\alpha$. Now let 
$e_x(\alpha)=\frac{\beta}{\chi_1\cdots \chi_n}$. 
Since $A^T_*(X)$ is $S$-free (Proposition \ref{char.ratcell.prop}), 
it is clear from the construction that  
%$A^T_*(X)$ is free, then 
$e_x$ is injective. 
%But $A^T_*(X)\simeq S$ by Proposition \ref{char.ratcell.prop}.       
\end{proof}

\smallskip

%Combining Lemma \ref{inj.cells.lem} with 
% yields  

Let $X$ be a $\Q$-filtrable $T$-variety. %with a $T$-action.
Then, by assumption, there is a closed algebraic rational cell 
$F=X_+(x_1,\lambda)$ 
(using the order of fixed points induced by the filtration, 
cf. Definition \ref{filtable.dfn}).  
Moreover $U=X\setminus F$ is also $\Q$-filtrable.  
We now proceed to 
describe $A^T_*(X)$ in terms of $A^T_*(F)$ and $A^T_*(U)$. 
Let $j_F:F\to X$ and $j_U:U\to X$ denote the inclusion maps.

%Define $\Q$-filtrable.

\begin{prop}\label{qfiltrable.main}
%Let $X$ be a $\Q$-filtrable $T$-variety. 
Notation being as above, 
%Let $F$ be the closed cell and $U=X\setminus F$ be the filtrable open complement. 
the maps $j_{F*}:A^T_*(F)\to A^T_*(X)$ and $j^*_{U}:A^T_*(X)\to A^T_*(U)$ fit into the exact sequence  
$$
0\to A^T_*(F)\to A^T_*(X)\to A^T_*(U)\to 0.
$$ 
\end{prop}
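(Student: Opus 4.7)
The plan is to split the problem into (i) the standard right-exact excision sequence, and (ii) an injectivity statement for $j_{F*}$, and to obtain (ii) by combining freeness of $A^T_*(F)$ with equivariant localization. Since by the filtrability assumption (Definition \ref{filtable.dfn}) the cell $F = \Sigma_1$ is closed in $X$ with open complement $U$, the equivariant excision sequence for equivariant Chow groups of Edidin--Graham \cite{eg:eqint} immediately gives
$$A^T_*(F) \xrightarrow{j_{F*}} A^T_*(X) \xrightarrow{j_U^*} A^T_*(U) \to 0.$$
It therefore suffices to prove that $j_{F*}$ is injective.

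Now, $F$ is an algebraic rational cell by assumption, so by Proposition \ref{char.ratcell.prop} we have $A^T_*(F) \simeq S$ as $S$-modules. In particular $A^T_*(F)$ is $S$-torsion-free, and the natural localization map $A^T_*(F) \hookrightarrow A^T_*(F) \otimes_S \mathcal{Q}$ is injective. Injectivity of $j_{F*}$ will therefore follow once we establish that $j_{F*} \otimes_S \mathcal{Q}$ is injective.

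For this last step, observe that the genericity of $\lambda$ forces $X^T = X^{\G}$, so each BB-cell $X_+(x_j,\lambda)$ contains exactly one $T$-fixed point, namely $x_j$; in particular $F^T = \{x_1\}$. Applying Theorem \ref{loc.chow.thm} to $F$ and to $X$ respectively, I obtain the commutative diagram
$$
\xymatrix{
A^T_*(\{x_1\}) \otimes_S \mathcal{Q} \ar[r] \ar[d]^{\cong} & A^T_*(X^T) \otimes_S \mathcal{Q} \ar[d]^{\cong} \\
A^T_*(F) \otimes_S \mathcal{Q} \ar[r]^{j_{F*} \otimes \mathcal{Q}} & A^T_*(X) \otimes_S \mathcal{Q},
}
$$
whose vertical arrows are localization isomorphisms. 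Since $X^T$ is a finite disjoint union of reduced points, we have $A^T_*(X^T) \cong \bigoplus_{x \in X^T} S[x]$, and the top horizontal arrow is the inclusion of the summand indexed by $x_1$; in particular it is injective. Hence $j_{F*} \otimes_S \mathcal{Q}$ is injective, which combined with the torsion-freeness of $A^T_*(F)$ completes the argument.

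I do not anticipate any substantial obstacle: the heavy lifting has already been done by Proposition \ref{char.ratcell.prop} (freeness $A^T_*(F) \simeq S$) and by the localization theorem. The only item requiring a brief verification is the identification $F^T = \{x_1\}$, which is immediate from $X^\lambda = X^T$ and the definition of the BB-cell $X_+(x_1,\lambda)$.
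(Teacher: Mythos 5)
Your argument is correct, and it reaches the key point (injectivity of $j_{F*}$) by a route that differs from the paper's. Both proofs start from the standard right-exact excision sequence and both ultimately rest on the freeness $A^T_*(F)\simeq S$ from Proposition \ref{char.ratcell.prop}; the difference is in how that freeness is converted into injectivity. The paper factors the equivariant multiplicity map as $e_{x_1,F}=e_{x_1,X}\circ j_{F*}$ and invokes Lemma \ref{cell.inj.lem}, which says $e_{x_1,F}$ is injective precisely because $A^T_*(F)$ is $S$-free. You instead observe that freeness gives torsion-freeness, so it suffices to check injectivity after $\otimes_S\mathcal{Q}$, where Theorem \ref{loc.chow.thm} collapses everything onto the fixed loci and the map becomes the inclusion of the summand $S[x_1]$ into $\bigoplus_{x\in X^T}S[x]$; the identification $F^T=\{x_1\}$ and the functoriality of proper pushforward along $\{x_1\}\hookrightarrow F\hookrightarrow X$ versus $\{x_1\}\hookrightarrow X^T\hookrightarrow X$ are exactly as you say. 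The two arguments are close cousins (the equivariant multiplicity is itself built from localization at the fixed point), but yours has a small technical advantage: it never needs $e_{x_1,X}$ to be defined, i.e., it does not require $x_1$ to be a nondegenerate fixed point of the ambient variety $X$, only of the cell $F$ — whereas the paper's factorization implicitly presupposes this. The paper's version, on the other hand, packages the injectivity into a reusable statement about $e_{x,F}$ that is exploited again later (e.g.\ in the induction behind Theorem \ref{qfilt.thm}).
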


\begin{proof} 
It is well-known that the sequence 
$$
\xymatrix{
A^T_*(F)\ar[r]^{j_{F*}}& A^T_*(X)\ar[r]^{j_U^*}& A^T_*(U)\ar[r]& 0 
}
$$
is exact. Thus it suffices to show that $j_{F*}$ is injective. 
But this follows easily from the 
factorization $e_{x,F}=e_{x,X}\circ j_{F*}$. 
Indeed, since $e_{x,F}$ is injective (Lemma  \ref{cell.inj.lem}),  
%, then 
%hence 
so is $j_{F*}$.   
\end{proof}

Arguing by induction on the length of the filtration leads to 
the following. %result. 

\begin{thm}\label{qfilt.thm}
Let $X$ be a $\Q$-filtrable $T$-variety. Then 
the $T$-equivariant Chow group of $X$ 
is a free $S$-module of rank $|X^T|$. In fact, 
it is freely generated by the classes of the closures 
of the cells $X_+(x_i,\lambda)$.   
%If, moreover, $A^T_*(W_i)\simeq S$ for all $i$, then $A^T_*(X)$ is also free of rank $|X^T|$. 
Consequently, $A_*(X)$ is also freely generated by the 
classes of the cell closures 
$\overline{X_+(x_i,\lambda)}$. 

\hfill $\square$ 
\end{thm}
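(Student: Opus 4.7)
The plan is to induct on $m = |X^T|$, using the filtration $\emptyset = \Sigma_0 \subset \Sigma_1 \subset \cdots \subset \Sigma_m = X$ from Definition \ref{qfiltrable.def} together with the short exact sequence of Proposition \ref{qfiltrable.main}. For the base case $m = 1$, the whole variety $X$ is a single algebraic rational cell, so Proposition \ref{char.ratcell.prop}(iii) gives $A^T_*(X) \simeq S$ as $S$-modules. Since $X$ is irreducible of dimension $n$ (Lemma \ref{contractible.lem}) and $A^T_i(X) = 0$ for $i > n$, any free generator of $A^T_*(X)$ must lie in degree $n$; as $A^T_n(X) = \Q \cdot [X]$, the class $[X]$ serves as a free generator.

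For the inductive step I would peel off the closed cell $F = \Sigma_1 = X_+(x_1, \lambda)$ from the bottom of the filtration and work with the open complement $U = X \setminus F$. The shifted filtration $\emptyset \subset \Sigma_2 \setminus F \subset \cdots \subset \Sigma_m \setminus F = U$ exhibits $U$ as a $\Q$-filtrable $T$-variety with $m-1$ fixed points, whose cells are precisely $X_+(x_i, \lambda)$ for $i \geq 2$ (each contained in $U$ since the corresponding sink $x_i$ lies in $U$). By the inductive hypothesis, $A^T_*(U)$ is $S$-free of rank $m-1$, with basis given by the closures of these cells in $U$. Feeding this into Proposition \ref{qfiltrable.main}'s sequence
$$
0 \to A^T_*(F) \xrightarrow{j_{F*}} A^T_*(X) \xrightarrow{j_U^*} A^T_*(U) \to 0,
$$
whose outer terms are now both $S$-free, forces the sequence to split, and so $A^T_*(X)$ is $S$-free of rank $m$. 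To pin down the cell-closure basis, I would note that $F$ is already closed in $X$, so $j_{F*}[F] = [\overline{X_+(x_1, \lambda)}]$, while for $i \geq 2$ the restriction $j_U^*$ sends $[\overline{X_+(x_i, \lambda)}]$ to the closure of $X_+(x_i, \lambda)$ inside $U$; the splitting then exhibits the $m$ classes $[\overline{X_+(x_i, \lambda)}]$ as a free $S$-basis of $A^T_*(X)$. The consequence for $A_*(X)$ follows from Theorem \ref{Tequiv.thm}: since $A_*(X) = A^T_*(X)/\Delta A^T_*(X)$ is the base change of a free $S$-module along $S \twoheadrightarrow \Q$, the cell-closure basis descends to a $\Q$-basis.

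The only genuine obstacle lies inside Proposition \ref{qfiltrable.main}: the injectivity of $j_{F*}$ is what keeps the sequence left-exact and produces rank $m$ on the nose rather than merely an upper bound. This injectivity rests on Lemma \ref{cell.inj.lem} via the factorization $e_{x, F} = e_{x, X} \circ j_{F*}$, which itself depends on the identification $A^T_*(F) \simeq S$ supplied by Proposition \ref{char.ratcell.prop}. Once those two ingredients are in hand, the inductive step reduces to a diagram chase.
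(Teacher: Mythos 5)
Your argument is correct and is exactly the route the paper takes: the paper proves Proposition \ref{qfiltrable.main} (injectivity of $j_{F*}$ via Lemma \ref{cell.inj.lem} and the factorization $e_{x,F}=e_{x,X}\circ j_{F*}$) and then simply invokes induction on the length of the filtration, which is the splitting/diagram-chase you spell out. Your identification of the cell-closure basis and the descent to $A_*(X)$ via Theorem \ref{Tequiv.thm} fill in the details the paper leaves implicit, with no gaps.
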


%Hence, by Theorem \ref{eqkro.thm}, Theorem \ref{gkm.kro.thm} and 
%Corollary \ref{qfilt.cor},  
%if $X$ is a $T$-linear variety which is $\Q$-filtrable and $T$-skeletal, 
%then 
%$A^*_T(X)\simeq PP^*_T(X).$ 
%This remark will be applied in 
%the next subsection to complex projective 
%rationally smooth 
%group embeddings.    
%Furthermore, 
If $X$ is a $\Q$-filtrable variety, 
then each filtered piece $\Sigma_i$ is also $\Q$-filtrable, 
and so Theorem \ref{qfilt.thm} applies at each step 
of the filtration.  
Our approach, 
based on 
equivariant multiplicities, 
is more flexible than 
the general approach which  
compares (equivariant) 
Chow groups with 
(equivariant) homology 
via the (equivariant) cycle map. 
This flexibility will be illustrated 
in the next sections.

%%%%%%%%%%%%%%%%%%%%%            %%%%%%%%%%%% 
%%%%%%%%%%%%%%%%%%%%%  Examples  %%%%%%%%%%%%
%%%%%%%%%%%%%%%%%%%%%            %%%%%%%%%%%%

\section{Applications to embedding theory} 

We now furnish our theory %the theory of $\Q$-filtrable varieties 
with its first set of examples: 
$\Q$-filtrable embeddings of reductive groups. 
%The purpose of this section is to 
%We use the 
%results of Sections 3 and 4, and the work of Renner 
%\cite{re:hpoly, re:ratsm}, 
%to 
We show that  
the notion of 
algebraic rational cell is well adapted to 
the study of %affine 
group embeddings. 
%In fact, it can be characterized combinatorially, 
%much in the same way as that of rational smoothness 
%(Theorem \ref{main.monoid.thm}). 
%Finally, we prove that the (topologically)  
%$\Q$-filtrable embeddings of  
%\cite[Section 7]{go:cells} are also 
%$\Q$-filtrable 
%in the sense of Section 4 (Theorem %\ref{ratsm.emb.are.qfilt.thm}). 

\smallskip 
%and to be consistent with our 
%previous notation, 
%In the subsequent pages %what follows 

%\indent{\bf Further notation.} 
\furnotation{
%Throughout this section  we work over an 
%algebraically closed field $\k$ of {\em arbitrary} %characteristic. 
We denote by  
$G$ %be 
a connected reductive 
linear algebraic group 
with Borel subgroup 
$B$ and maximal torus $T\subset B$.  
We denote by $W$ the Weyl group of $(G,T)$. 
%Observe that $W$ is generated by reflections $\{s_\alpha\}_{\alpha \in \Phi}$, 
%where $\Phi$ stands for the set of roots of $(G,T)$. 

%\smallskip

An affine algebraic monoid $M$ is called {\em reductive} it is 
irreducible, normal, and its unit group is a reductive 
algebraic group. See \cite{re:lam} for details. 
Let $M$ be a reductive monoid with zero and unit group $G$.
We denote by $E(M)$ the idempotent set 
of $M$, that is, $E(M)=\{e\in M\,|\, e^2=e\}$. 
Likewise, 
we denote by $E(\overline{T})$ the idempotent set of the associated 
affine torus embedding $\overline{T}$.   
%that is, $E(\overline{T})=\{e\in \overline{T}\,|\, e^2=e\}.$ 
%Similarly, $E(M)$ denotes the idempotent set of $M$. 
One defines a partial order on $E(\overline{T})$ by declaring 
$f\leq e$ if and only if $fe=f$.
Denote by $\Lambda \subset E(\overline{T})$, the cross section lattice of $M$.
The Renner monoid $\mathcal{R}\subset M$ is a finite monoid
whose group of units is $W$ %(the Weyl group) 
and contains $E(\overline{T})$
as idempotent set. 
Any $x\in \mathcal{R}$ can be written as $x=fu$, 
where $f\in E(\overline{T})$ and $u\in W$. 
Given $e\in E(\overline{T})$, we write $C_W(e)$ for 
the centralizer of $e$ in $W$. 
Denote by $\mathcal{R}_k$ the set of elements of rank $k$ in $\mathcal{R}$, 
that is, 
$\mathcal{R}_k=\{x\in \mathcal{R} \, | \, \dim{Tx}=k \,\}.$
Analogously, one has 
$\Lambda_k\subset \Lambda$ and $E_k\subset E(\overline{T})$. 
For $e\in E(M)$, set  
$M_e:=\overline{\{g\in G\,|\,ge=eg=e\}}$. 
Then $M_e$ is an 
irreducible, normal reductive monoid with 
%unit group 
%$G_e=\{g\in G\,|\, ge=eg=e\}$ and 
$e$ as its zero element \cite{bri:m}. 
%Note that the Renner monoid of $M_e$ 
%is given by the restricted cross 
%section lattice $\lambda_e=\{f\in \Lambda| fe=e\}$. 
}

\subsection{Group embeddings} 
A normal irreducible variety $X$ 
is called an {\em embedding} of $G$, 
or a {\em group embedding}, 
if $X$ is a $G\times G$-variety containing an open orbit 
isomorphic to $G$. Due to %As a consequence of 
the Bruhat decomposition, group embeddings are spherical 
$G\times G$-varieties. 
%%It is worth emphasizing that this a generalization of the notion %%of {\em toric varieties}, 
%%objects that have been studied extensively in algebraic geometry. %for nearly forty years (\cite{da:tor, dp:sym, bif:reg}). 
Substantial information about the topology of a group 
embedding is obtained by restricting one's attention 
to the induced action of $T\times T$.  
When $G=B=T$, we get back the notion of toric varieties.  
Group embeddings are classified as follows. 
%For details, see \cite{ti:sph} and the references therein. 

\smallskip

\noindent {\em (I) Affine case:} 
Let $M$ be a reductive monoid with unit group $G$. 
Then $G\times G$-acts naturally 
on $M$ via $(g,h)\cdot x=gxh^{-1}$. 
The orbit of the identity is 
$G\times G/\Delta(G)\simeq G$. 
Thus $M$ is an affine embedding of $G$. 
Remarkably, by a result 
of Rittatore \cite{ri:m},  
reductive monoids are exactly the 
affine embeddings of reductive
groups. %So algebraic monoids are at the core of embedding theory.

\smallskip

\noindent{\em (II) Projective case:} 
Let $M$ be a reductive monoid with zero and unit group $G$.
Then there exists a central one-parameter subgroup 
$\epsilon:\mathbb{G}_m^*\to T$,
with image $Z$, 
such that $\displaystyle \lim_{t\to 0}\epsilon(t)=0$. 
Moreover, 
the quotient space $$\P_\epsilon(M):=(M\setminus\{0\})/Z$$
is a normal projective variety on which $G\times G$ acts via
$(g,h)\cdot [x]=[gxh^{-1}].$
Hence, $\P_\epsilon(M)$ is a normal projective embedding of the quotient group $G/Z$.
These varieties were introduced 
by Renner in his study of algebraic monoids 
(\cite{re:hpoly}, \cite{re:ratsm}). %\cite{re:hpolyirr}).
Notably, normal projective embeddings of  
connected reductive groups are exactly the 
projectivizations of normal algebraic monoids \cite{re:class}. %and the references therein.

\subsection{Algebraic monoids and algebraic rational cells}

\begin{lem}\label{equiv.rel.quot.mon.lem}
Let $\varphi:L\to M$ be a finite surjective %dominant 
morphism of normal, 
reductive monoids.  
Then $\varphi$ is the quotient map %; specifically, 
by the finite group 
$\ker(\varphi|_{G_L})$,  
%$$L/\mu\simeq M,$$ 
where 
%$\mu={\rm ker}(\varphi|_{G_L})$ and 
$G_L$ is the unit group of $L$.   
\end{lem}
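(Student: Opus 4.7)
The plan is to identify $N:=\ker(\varphi|_{G_L})$ as a finite normal subgroup of $G_L$, show that $\varphi$ factors through the geometric quotient $L/N$ (taken with respect to left multiplication by $N$), and then verify that the induced map $\bar\varphi\colon L/N\to M$ is an isomorphism via a standard normality argument.

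First I would check that $\varphi|_{G_L}\colon G_L\to G_M$ is a surjective homomorphism of algebraic groups with finite kernel. Since $\varphi$ is a monoid morphism, it sends units to units, so $\varphi(G_L)\subseteq G_M$. The image $\varphi(G_L)$ is a closed subgroup of $G_M$ (images of algebraic group homomorphisms are closed), and it is dense in $M$ because $G_L$ is dense in $L$ and $\varphi$ is surjective; as $G_M$ is open in $M$, density in $M$ implies density in $G_M$, so $\varphi(G_L)=G_M$. The finiteness of $N$ is immediate from the quasi-finiteness of $\varphi$.

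Next, let $N$ act on $L$ by left translation through the inclusion $N\hookrightarrow G_L\subseteq L$. Since $L$ is affine and $N$ is finite, the geometric quotient $L/N=\mathrm{Spec}\,\mathcal{O}(L)^N$ exists as an affine variety, and it is normal because $\mathcal{O}(L)$ is normal and the ring of invariants under a finite group action inherits integral closedness in its fraction field. Because $N\subseteq \ker\varphi$, the morphism $\varphi$ is constant on $N$-orbits, so it factors as $L\to L/N\xrightarrow{\bar\varphi}M$, and $\bar\varphi$ inherits finiteness and surjectivity from $\varphi$. By the previous paragraph, $\bar\varphi$ restricts to the isomorphism $G_L/N\xrightarrow{\sim}G_M$ on the open dense locus of units, hence $\bar\varphi$ is birational.

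Finally, since $\bar\varphi\colon L/N\to M$ is a finite birational morphism onto a normal variety, Zariski's Main Theorem (or directly: $\mathcal{O}(M)\subseteq\mathcal{O}(L)^N$ is an integral extension of integral domains with the same fraction field, and $\mathcal{O}(M)$ is integrally closed) implies $\bar\varphi$ is an isomorphism. Composing with the quotient $L\to L/N$ identifies $\varphi$ with the quotient map by $N$, as required. The main obstacle is step one — verifying surjectivity of $\varphi|_{G_L}$ onto $G_M$ — since this is the point where the monoid and group-theoretic structures interact; once that is in place, the remaining steps are routine invariant-theoretic and normality arguments.
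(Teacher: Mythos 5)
Your proposal is correct and follows essentially the same route as the paper: factor $\varphi$ through the quotient of $L$ by the finite kernel $N$, observe that the induced map to $M$ is finite and birational (being an isomorphism over the dense unit group), and conclude by normality of $M$ via Zariski's Main Theorem. The paper's only additional observation is that $N$, being finite and normal in the connected group $G_L$, is central (hence lies in $T_L$), a fact not needed for the lemma itself; your treatment of the surjectivity of $\varphi|_{G_L}$ onto $G_M$ is if anything more careful than the paper's.
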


\begin{proof}
Let $\mu={\rm ker}(\varphi|_{G_L})$. 
Because $\mu$ is a finite and normal subgroup of the connected reductive group $G_L$, 
it is central. %(this follows from the fact that $G_L$ is connected).
Hence $\mu\subset T_L$ (for the center of $G_L$ is the intersection of all its maximal tori). 
It follows that the induced map $\tilde{\varphi}:L/\mu\to M$ is bijective and birational. 
But $M$ is normal, so $\tilde{\varphi}$ is an isomorphism.     
\end{proof}

The following is crucial for our purposes.  
%the major keys to many of the results 
%of this paper. 

\begin{lem}[\protect{\cite[Lemma 1]{gr:r}}]\label{trivialaction.lem}
Let $G$ be a connected linear algebraic group. Let $X$ be a $G$-variety. 
Then the action of $G$ on $A_*(X)$ is trivial. \hfill $\square$
\end{lem}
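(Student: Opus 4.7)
The aim is to show that for any $g\in G$ and any irreducible closed subvariety $V\subseteq X$, the equality $[g\cdot V]=[V]$ holds in $A_*(X)$; this suffices because $A_*(X)$ is spanned by such classes. Since $G$ is a connected linear algebraic group, it is generated by the images of its one-parameter subgroups $\phi:\mathbb{G}_a\to G$ and $\phi:\G\to G$. Writing $g=\phi_1(t_1)\cdots\phi_k(t_k)$ and inducting on $k$, the problem reduces to the case $g=\phi(t)$ for a single one-parameter subgroup $\phi:U\to G$, where $U=\A^1$ or $U=\G$ is viewed as a non-empty open subset of $\P^1$.

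For this case, I would form the closed subvariety
\[
\Gamma:=\overline{\{(s,\phi(s)\cdot v)\,:\,s\in U,\,v\in V\}}\subseteq\P^1\times X,
\]
an integral scheme of dimension $\dim V+1$. Let $p:\Gamma\to\P^1$ and $q:\Gamma\to X$ be the two projections. Then $p$ is proper and dominant, and it is flat because $\P^1$ is a regular one-dimensional base and $\Gamma$ is integral; the map $q$ is proper since $\Gamma$ is closed in $\P^1\times X$. Over $U$, the scheme $\Gamma$ coincides with the image of the closed immersion $U\times V\hookrightarrow U\times X$, $(s,v)\mapsto(s,\phi(s)v)$ — this is a closed immersion because it is $\mathrm{id}_U\times(V\hookrightarrow X)$ postcomposed with the automorphism $(s,x)\mapsto(s,\phi(s)x)$ of $U\times X$. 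Hence, for every closed $s\in U$, the scheme-theoretic fiber $p^{-1}(s)$ is a reduced copy of $V$, and $q$ restricts to an isomorphism onto $\phi(s)\cdot V\subseteq X$; in particular, $q_*(p^*[s])=[\phi(s)\cdot V]$ in $A_*(X)$.

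To conclude, recall that $A_0(\P^1)\cong\Q$ is generated by the class of any closed point, so $[s]=[s_0]$ in $A_0(\P^1)$ for any $s,s_0\in\P^1$. Flat pullback then gives $p^*[s]=p^*[s_0]$ in $A_*(\Gamma)$, and proper pushforward by $q$ yields $[\phi(s)\cdot V]=[\phi(s_0)\cdot V]$. Choosing $s_0$ with $\phi(s_0)=e$ (every one-parameter subgroup passes through the identity), one obtains $[\phi(t)\cdot V]=[V]$, as required.

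The delicate step is the flatness of $p$ and the identification of its fibers over $U$ as reduced copies of $V$; both will follow from generic principles (flatness over a regular one-dimensional base for dominant maps of integral schemes, plus the tautological graph description of $\Gamma$ over $U$). The conceptual point is that one-parameter subgroups of $G$ admit a natural compactification to $\P^1$, whose $A_0$ is trivial with $\Q$-coefficients; this allows the non-proper action map $G\times V\to X$ to be replaced by a proper family whose fibers interpolate between $V$ and $g\cdot V$.
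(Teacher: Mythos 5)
The core of your argument --- the family $\Gamma\subseteq\P^1\times X$ interpolating between $[V]$ and $[g\cdot V]$, the flatness of $p$ over the regular one-dimensional base $\P^1$, the identification of the fibers over $U$ as reduced copies of $V$, and the use of $A_0(\P^1)\simeq\Q$ --- is correct, and it is essentially Grothendieck's original mechanism; note that the paper offers no proof of this lemma and simply cites \cite[Lemma 1]{gr:r}. However, your opening reduction fails in the paper's stated generality. The claim that a connected linear algebraic group is generated by the images of its one-parameter subgroups $\mathbb{G}_a\to G$ and $\G\to G$ is a characteristic-zero fact, whereas the paper works over an algebraically closed field of arbitrary characteristic. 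In characteristic $p$, take $G=W_2$, the group of Witt vectors of length two: it is a two-dimensional connected commutative unipotent group, hence admits no nontrivial homomorphism from $\G$, while every homomorphism $\mathbb{G}_a\to W_2$ factors through the $p$-torsion subgroup $W_2[p]=\{(0,a)\}\simeq\mathbb{G}_a$ (because $\mathbb{G}_a$ is killed by $p$ and $p\cdot(a_0,a_1)=(0,a_0^p)$). So the subgroup generated by all one-parameter subgroups is proper, and your induction on $k$ never reaches a general $g\in G$.

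The repair is standard and keeps your construction intact: replace group homomorphisms by arbitrary morphisms $f:U\to G$ with $U$ open in $\P^1$; nothing in your definition of $\Gamma$, in the flatness argument, or in the identification of the fibers uses that $\phi$ is a homomorphism, so the same reasoning gives $[f(s)\cdot V]=[f(s_0)\cdot V]$ for any two $s,s_0\in U$. Now use that a connected linear algebraic group over an algebraically closed field is a rational variety in every characteristic, so it contains a dense open subset $\Omega$ isomorphic to an open subset of $\A^n$; any two points of $\Omega$ lie on such a curve $f$ (a line of $\A^n$ meeting $\Omega$ in an open subset of $\P^1$). Since $\{g\in G\;|\;g_*=\mathrm{id}\text{ on }A_*(X)\}$ is a subgroup of $G$ and, by the above, contains $\Omega\cdot\Omega^{-1}=G$, the lemma follows. (Alternatively, one can observe that the paper only invokes this lemma for connected reductive unit groups, which are generated by maximal tori and root subgroups $U_\alpha\simeq\mathbb{G}_a$ in all characteristics, so your argument as written does cover the intended application --- but not the lemma as stated.)
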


\begin{cor}\label{mon.rat.chow.cor}
Let $\varphi: L\to M$ be a finite dominant morphism of normal algebraic monoids.
Then $\varphi$ induces an isomorphism of (rational) Chow groups, namely, $A_*(L)\simeq A_*(M)$. 
\end{cor}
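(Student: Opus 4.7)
The plan is to combine Lemma \ref{equiv.rel.quot.mon.lem} with Lemma \ref{trivialaction.lem}, after reducing the problem to the computation of Chow groups of a finite group quotient.

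First, by Lemma \ref{equiv.rel.quot.mon.lem}, the morphism $\varphi:L\to M$ is the quotient map of $L$ by the finite group $\mu:=\ker(\varphi|_{G_L})$, which lies in $T_L\subset G_L$. In particular, $M=L/\mu$ and $\mu$ is contained in the connected linear algebraic group $G_L$.

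Next, I invoke the standard description of rational Chow groups of finite quotients: for a finite group $\mu$ acting on a variety $L$ with geometric quotient $\pi:L\to L/\mu$, the pushforward $\pi_*$ restricts to an isomorphism $A_*(L)^{\mu}\xrightarrow{\sim} A_*(L/\mu)$ on rational Chow groups. Concretely, one has the push-pull formulas $\pi_*\pi^*=|\mu|\cdot\mathrm{id}$ and $\pi^*\pi_*=\sum_{g\in\mu}g_*$, available rationally since $L$ and $M$ are normal; these imply surjectivity of $\pi_*$ and injectivity on invariants. Equivalently, this is the Edidin--Graham identification $A^{\mu}_*(L)\simeq A_*(L/\mu)$, combined with the fact that for a finite group $\mu$ the equivariant Chow group with $\Q$-coefficients coincides with the $\mu$-invariants in $A_*(L)$.

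Finally, because $\mu$ sits inside the connected group $G_L$, Lemma \ref{trivialaction.lem} ensures that $G_L$, and therefore $\mu$, acts trivially on $A_*(L)$. Hence $A_*(L)^{\mu}=A_*(L)$, and combining with the previous step yields the desired isomorphism $\varphi_*:A_*(L)\xrightarrow{\sim} A_*(M)$.

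The main point requiring care is the identification of $A_*(L/\mu)$ with the $\mu$-invariants in $A_*(L)$; this is classical with rational coefficients and relies on the normality assumption we already have, so I do not anticipate a genuine obstacle beyond citing the relevant standard result.
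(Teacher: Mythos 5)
Your proof is correct and follows essentially the same route as the paper: reduce to a quotient by the finite central subgroup $\mu=\ker(\varphi|_{G_L})$ via Lemma \ref{equiv.rel.quot.mon.lem}, identify $A_*(M)$ with $A_*(L)^{\mu}$ (the paper cites \cite[Example 1.7.6]{f:int} for this), and conclude by Lemma \ref{trivialaction.lem} that the $\mu$-action on $A_*(L)$ is trivial because $\mu$ sits in the connected group $G_L$. The only cosmetic difference is that you spell out the push--pull formulas behind the finite-quotient identification, which the paper handles by a single citation.
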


\begin{proof}
By Lemma \ref{equiv.rel.quot.mon.lem} 
and \cite{f:int}, Example 1.7.6, 
we have $(A_*(L))^\mu\simeq A_*(M)$. Now, since the action of $\mu$ 
on $A_*(L)$ comes induced from the action of $G_L$ on $A_*(L)$ we 
have $(A_*(L))^{G_L}\subset (A_*(L))^\mu$. But, by 
Lemma \ref{trivialaction.lem}, we have $(A_*(L))^{G_L}=A_*(L)$. 
We conclude that $(A_*(L))^\mu=A_*(L)$. 
%and the result follows.  
\end{proof}

\smallskip

Let $M$ and $N$ be reductive monoids. 
Following Renner \cite{re:hpoly}, we write 
$M\sim_0 N$ if there is a reductive monoid $L$ 
and finite dominant morphisms $L\to M$ and $L\to N$ 
of algebraic monoids. One checks that this gives rise 
to an equivalence relation. 
The following basic result, a consequence of 
Corollary \ref{mon.rat.chow.cor}, states that 
rational Chow groups are an invariant of the 
 equivalence classes.

\begin{cor}\label{mon.are.rcells}
Let $M$ and $N$ be reductive monoids. If $M\sim_0 N$, then 
$A_*(M)\simeq A_*(N)$. \hfill $\square$  
\end{cor}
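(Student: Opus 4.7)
The plan is to apply Corollary \ref{mon.rat.chow.cor} twice and then compose the resulting isomorphisms. The relation $M \sim_0 N$ is defined precisely to unfold into the hypothesis of that corollary: by definition there exists a reductive monoid $L$ together with finite dominant morphisms $\varphi \colon L \to M$ and $\psi \colon L \to N$ of algebraic monoids. Since reductive monoids are normal by convention (see the notation subsection), both $\varphi$ and $\psi$ are finite dominant morphisms between normal algebraic monoids, so Corollary \ref{mon.rat.chow.cor} applies directly to each.

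Applying Corollary \ref{mon.rat.chow.cor} to $\varphi$ yields an isomorphism $\varphi_{*} \colon A_{*}(L) \xrightarrow{\sim} A_{*}(M)$, and applying it to $\psi$ yields an isomorphism $\psi_{*} \colon A_{*}(L) \xrightarrow{\sim} A_{*}(N)$. Composing one with the inverse of the other gives the desired isomorphism $A_{*}(M) \simeq A_{*}(N)$, which establishes the corollary.

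There is really no obstacle here: the statement is a formal consequence of the preceding corollary combined with the definition of $\sim_{0}$, and no additional ingredient (such as invariants under a finite group, or Lemma \ref{trivialaction.lem}) needs to be reinvoked. The only small point worth noting explicitly is that the intermediate monoid $L$ in the definition of $\sim_{0}$ is, by hypothesis, itself reductive (hence normal), so there is no subtlety about applying Corollary \ref{mon.rat.chow.cor} on both sides of the zigzag $M \leftarrow L \rightarrow N$.
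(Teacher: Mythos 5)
Your proof is correct and is exactly the argument the paper intends: the corollary is stated with no separate proof precisely because it follows by applying Corollary \ref{mon.rat.chow.cor} to each of the two finite dominant morphisms $L\to M$ and $L\to N$ in the definition of $\sim_0$ and composing the resulting isomorphisms. Your remark that $L$ is reductive, hence normal, so that Corollary \ref{mon.rat.chow.cor} applies on both sides, is the only point needing checking, and you handle it correctly.
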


%\smallskip
%
%The reader should be aware  
%that even though complex 
%affine monoids with zero are contractible 
%for the classical topology, 
%this does not imply that their 
%Chow groups are trivial, as 
%the latter behaves more like homology 
%with compact supports. 

\smallskip

Now let $M$ be a reductive monoid with zero and unit group $G$. 
Recall that $T\times T$ acts on $M$ via $(s,t)\cdot x=txs^{-1}$
and $0$ is the unique attractive fixed point for this action (see e.g. \cite[Lemma 1.1.1]{bri:m}). 
%Thus $M$ is an affine $T\times T$-variety with $0$ 
%as its unique attractive fixed point. 
%Recall that 
%$T\times T$ acts on $M$ via $(s,t)\cdot x=txs^{-1}$
%and 
The number of closed irreducible 
$T\times T$-invariant curves in $M$ is finite 
(all of them passing through $0$), and  
it equals $|\mathcal{R}_1|$. Indeed,  
each closed $T\times T$-curve of $M$ 
can be written as $\overline{TxT}$, where $x\in \mathcal{R}_1$, 
for they correspond to the $T\times T$-fixed 
points of $\P_\epsilon(M)$, see \cite[Theorem 3.1]{go:equiv}.  
It follows that $\dim{M}\leq |\mathcal{R}_1|$.   
Similarly, $\overline{T}$ is an affine 
$T$-variety with $0$ as its unique attractive fixed point 
and with finitely many $T$-stable curves. 
The number of these curves equals $|E_1|$ and so
$\dim{T}\leq |E_1|$. 
Next we provide combinatorial criteria 
for showing when $M$ is an algebraic rational cell (for 
the $T\times T$-action). This adds to the list of 
equivalences from \cite{re:hpoly} and \cite{re:ratsm}.

\begin{thm}\label{main.monoid.thm}
Let $M$ be a reductive monoid with zero and unit group $G$. 
%Set $m=\dim M$, and $n=\dim \overline{T}$.  
Then the following are equivalent.
\smallskip

\begin{enumerate}[(a)]
\item $M\sim_0 \prod_iM_{n_i}(\k)$.

\smallskip

\item If $T$ is a maximal torus of $G$, then 
$\dim{T}=|E_1|$. %E_1(\overline{T})

\smallskip

\item $\overline{T}\sim_0 \A^n$. %, where $n=\dim{(T)}$. 

\smallskip

\item $(\overline{T},0)$ is an algebraic rational cell.

\smallskip

\item $(M,0)$ is an algebraic rational cell.   

\smallskip

\item $\dim{M}=|\mathcal{R}_1|$. 
\end{enumerate}
\end{thm}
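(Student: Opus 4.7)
The plan is to splice conditions $(d)$ and $(e)$ into a loop of equivalences $(a) \Leftrightarrow (b) \Leftrightarrow (c) \Leftrightarrow (f)$, which is already known from Renner's characterization of rationally smooth reductive monoids \cite{re:hpoly, re:ratsm}. Thus it suffices to prove the chains $(c) \Rightarrow (d) \Rightarrow (b)$ and $(a) \Rightarrow (e) \Rightarrow (f)$, at which point $(d)$ and $(e)$ are absorbed into Renner's equivalence class.

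For $(c) \Rightarrow (d)$, assume $\overline{T} \sim_0 \A^n$, where $n = \dim T$. Then Corollary \ref{mon.are.rcells} yields $A_*(\overline{T}) \simeq A_*(\A^n) \simeq \Q$, concentrated in degree $n$. Since $0$ is an attractive $T$-fixed point of $\overline{T}$, Lemma \ref{contractible.lem} immediately gives that $(\overline{T}, 0)$ is an algebraic rational cell. The implication $(a) \Rightarrow (e)$ is strictly analogous: the product $\prod_i M_{n_i}(\k)$ is an affine space of dimension $\dim M$, its Chow group is $\Q$ in top degree, and Corollary \ref{mon.are.rcells} together with Lemma \ref{contractible.lem} applied to the $T \times T$-action on $M$ (for which $0$ is attractive by \cite[Lemma 1.1.1]{bri:m}) shows $(M, 0)$ is an algebraic rational cell.

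For $(d) \Rightarrow (b)$, recall that $\overline{T}$ is an affine toric variety with finitely many closed irreducible $T$-stable curves through $0$, namely $|E_1|$ of them. Assuming $(\overline{T}, 0)$ is an algebraic rational cell, Corollary \ref{gkm.cells.cor}(ii) applies and forces $\dim T = |E_1|$. The parallel implication $(e) \Rightarrow (f)$ uses the $T \times T$-action on $M$: by the discussion preceding the theorem, the closed irreducible $T \times T$-invariant curves through $0$ are precisely the $\overline{TxT}$ with $x \in \mathcal{R}_1$, so there are exactly $|\mathcal{R}_1|$ of them. Again Corollary \ref{gkm.cells.cor}(ii) delivers $\dim M = |\mathcal{R}_1|$.

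The main obstacle, beyond invoking Renner's theorem correctly, is verifying all the hypotheses of Corollary \ref{gkm.cells.cor}(ii) in the monoid setting. One needs $M$ (respectively $\overline{T}$) to be irreducible with $0$ an attractive fixed point, and finiteness of the count of invariant curves through $0$; the first two are built into the definition of a reductive monoid with zero, while the curve count rests on the standard identification of one-dimensional $T \times T$-orbits in $M$ with $\mathcal{R}_1$ (and of those in $\overline{T}$ with $E_1$). Once these are checked, Corollary \ref{gkm.cells.cor} does all the work, and the equivalence is assembled by composing with Renner's prior implications.
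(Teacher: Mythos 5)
Your reductions $(c)\Rightarrow(d)\Rightarrow(b)$ and $(a)\Rightarrow(e)\Rightarrow(f)$ are correct and match the paper's argument: Corollary \ref{mon.are.rcells} plus Lemma \ref{contractible.lem}/Proposition \ref{char.ratcell.prop} handle $(c)\Rightarrow(d)$ and $(a)\Rightarrow(e)$, and Corollary \ref{gkm.cells.cor}(ii) together with the identification of the closed invariant curves through $0$ with $E_1$ (resp.\ $\mathcal{R}_1$) handles $(d)\Rightarrow(b)$ and $(e)\Rightarrow(f)$. The problem is the very first sentence: you take the loop $(a)\Leftrightarrow(b)\Leftrightarrow(c)\Leftrightarrow(f)$ as ``already known from Renner.'' The paper only imports the equivalence of $(a)$, $(b)$, $(c)$ from \cite[Theorem 2.1]{re:hpoly}; condition $(f)$ is \emph{not} part of that package. (The appearance of $\dim M=|\mathcal{R}_1|$ in \cite{re:ratsm} is as a criterion for rational smoothness over $\C$, proved topologically --- exactly the kind of input this theorem, stated over an arbitrary algebraically closed field and meant to avoid the cycle map, cannot use without circularity; indeed Corollary \ref{rsm.mon.are.alg.cor} is later \emph{deduced} from this theorem together with Renner's result.) With $(f)$ removed from the known loop, your argument establishes that $(a),(b),(c),(d)$ are equivalent and that they imply $(e)$, which implies $(f)$, but nothing brings $(f)$ back. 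So $(e)$ and $(f)$ are only shown to be necessary conditions, not equivalent ones.

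The missing implication $(f)\Rightarrow(b)$ is the substantial new content of the paper's proof, and it is not a one-line citation. One takes the finite $T\times T$-equivariant surjections $\pi_x:\overline{TxT}\to \k_x$ on each of the $|\mathcal{R}_1|$ closed invariant curves, extends them to $M$, and forms the product map $\pi:M\to V=\prod_{x\in\mathcal{R}_1}\k_x$; the hypothesis $\dim M=|\mathcal{R}_1|$ is what makes $\pi$ finite \emph{and surjective} (as in item (5) of Subsection 2.4). Restricting to the diagonal torus, $M^{\Delta T}=\overline{T}$ maps finitely and surjectively onto $V^{\Delta T}$, which contains $\prod_{e\in E_1}\k_e$; comparing dimensions gives $|E_1|\leq \dim V^{\Delta T}=\dim T$, and the reverse inequality $\dim T\leq |E_1|$ always holds, whence $(b)$. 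You would need to supply this (or some substitute closing $(f)$ back into the loop) for the proof to be complete.
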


\begin{proof}
The equivalence of $(a)$, $(b)$ and $(c)$ is proven in \cite[Theorem 2.1]{re:hpoly} (no use of rational smoothness is made there). 
The implication $(c) \Rightarrow (d)$ 
follows from Corollary \ref{mon.are.rcells} and 
Proposition \ref{char.ratcell.prop}. 
On the other hand, condition $(d)$ implies $(b)$  
because of Corollary \ref{gkm.cells.cor} and the fact 
that $|E_1(\overline{T})|$ is the number 
of $T$-invariant curves of $\overline{T}$ passing through $0$.  
Hence conditions $(a)$, $(b)$, $(c)$ and $(d)$ are all equivalent. 
\smallskip

Certainly $(a)$ implies $(e)$, 
by Corollary \ref{mon.are.rcells} and  
Proposition \ref{char.ratcell.prop}. In turn, $(e)$ 
yields $(f)$ due to Corollary \ref{gkm.cells.cor} 
and the fact that the number of closed irreducible 
$T\times T$-curves in $M$ equals $|\mathcal{R}_1|$. 
So to conclude the proof it suffices to 
show that $(f)$ implies $(b)$. 
For this we argue as follows. 
\smallskip

Assume $(f)$ and recall that each closed $T\times T$-curve 
in $M$ can be written as $\overline{TxT}$, 
with $x\in \mathcal{R}_1$. 
Moreover, if we write $x=ew$, with $e\in E_1$ and $w\in W$, then $T\times T$ acts on $\overline{TxT}$ through 
the character $(\lambda_e,\lambda_e({\rm int}(w)))$, 
where $\lambda_e:T\to eT\simeq \k^*$ is the character sending 
$t$ to $et$. 

Now, for each $x=ew\in \mathcal{R}_1$, 
we can find a finite $T\times T$-equivariant 
surjective map $\pi_x:\overline{TxT}\to \k_x$.
Here, $T\times T$-acts on $\k_x\simeq \k$ 
via $(\lambda_e,\lambda_e({\rm int}(w)))$. 
Since $\overline{TxT}$ is $T\times T$-invariant and closed in $M$, 
we can extend $\pi_x$ to a $T\times T$-equivariant morphism $\pi_x:M\to \k_x$. 
Syncronizing efforts via the product map, we obtain a $T\times T$-equivariant map 
$$\pi: M\to V=\prod_{x\in \mathcal{R}_1}\k_x, \;\; m\mapsto (\pi_x(m))_{x\in \mathcal{R}_1}.$$  
By construction, $\pi$ is finite (cf. proof of item (\ref{sync.item}) in page \pageref{sync.item}), 
and given that $\dim{M}=|\mathcal{R}_1|$, 
it is also surjective. 

Let $\Delta{T}\subset T\times T$ be the diagonal torus. 
We know that the fixed point set $M^{\Delta{T}}$ equals $\overline{T}$ (see the proof of \cite[Theorem 5.5]{re:lam}). 
Let us look at the restriction map 
$$\pi:\overline{T}\to V^{\Delta{T}}.$$ 
We claim that $\dim{V^{\Delta{T}}}=|E_1(\overline{T})|$.
Indeed, it is clear that for $e\in E_1(\overline{T})$, 
the $T\times T$-invariant curve $\k_e\subset V$ 
is fixed by 
$\Delta{T}$, since $tet^{-1}=e$ (recall that $\overline{T}$ is commutative). 
Hence $$\prod_{e\in E_1}\k_e \subset V^{\Delta{T}}.$$ 
Thus, $$|E_1(\overline{T})|=\dim{\prod_{e\in E_1(\overline{T})}\k_e}\leq \dim{V^{\Delta(T)}}=\dim{T}.$$ 
But, in general, 
$\dim{T}\leq |E_1(\overline{T})|$.
%, as $|E_1(\overline{T})|$ represents the number 
%of $T$-invariant curves in $\overline{T}$. 
Hence $\dim{T}=|E_1(\overline{T})|$. As this is 
condition $(b)$, the proof is now complete.            
\end{proof}

\begin{rem}\label{converse.curves.cor}
Let $M$ be a reductive monoid with zero.  
Theorem \ref{main.monoid.thm} gives a converse to 
Corollary \ref{gkm.cells.cor}:   
$(M,0)$ is an algebraic rational cell 
if and only if 
$\dim M=|\mathcal{R}_1|$.  
\end{rem}

Theorem \ref{main.monoid.thm} and  
\cite[Theorem 2.4]{re:ratsm} immediately give the following. 
Notice that the cycle map 
%$cl_M:A_*(M)\to H_*(M,M\setminus\{0\})$ 
is not needed in the proof.  

\begin{cor}\label{rsm.mon.are.alg.cor}
Let $\k=\C$. Let $M$ be a reductive monoid with zero, and 
let $\overline{T}$ be the associated affine toric variety.  
Then $M$ (resp. $\overline{T}$) 
is rationally smooth 
if and only if 
$M$ (resp. $\overline{T}$) is an algebraic rational cell.   \hfill $\square$ 
\end{cor}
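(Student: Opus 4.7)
The plan is to reduce both directions of the equivalence to a purely combinatorial condition, exploiting the list of equivalences already established in Theorem \ref{main.monoid.thm} together with Renner's combinatorial criterion for rational smoothness.

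First I would invoke Theorem 2.4 of \cite{re:ratsm}, which asserts that a reductive monoid $M$ with zero is rationally smooth precisely when $M \sim_0 \prod_i M_{n_i}(\C)$, or equivalently when $\dim M = |\mathcal{R}_1|$. Similarly, the associated affine torus embedding $\overline{T}$ is rationally smooth exactly when $\overline{T} \sim_0 \A^{\dim T}$, i.e. when $\dim T = |E_1(\overline{T})|$. On the Chow-theoretic side, Theorem \ref{main.monoid.thm} above shows that these same conditions characterize $(M,0)$ (respectively $(\overline{T},0)$) as an algebraic rational cell. Placing the two lists of equivalences side by side yields the stated biconditional.

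The point to emphasize, and what I would spell out after the argument, is that the proof bypasses entirely the cycle map $\mathrm{cl}: A_*(M) \to H_*(M;\Q)$. One might have expected to deduce the equivalence by showing that rational smoothness of $M$ forces $\mathrm{cl}$ to be an isomorphism, and then transferring rational cohomological information to Chow-theoretic information. That extra step is not needed here: both \emph{rationally smooth} and \emph{algebraic rational cell} have the same combinatorial signature in terms of the idempotent set $E_1$ and the Renner monoid $\mathcal{R}_1$, so the equivalence is immediate once those signatures are aligned. There is accordingly no significant obstacle; the substantive work has been carried out in Theorem \ref{main.monoid.thm} and in Renner's theorem, and the corollary is essentially a dictionary between the two.
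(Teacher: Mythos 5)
Your proposal is correct and takes essentially the same route as the paper, which deduces the corollary immediately by matching the combinatorial characterizations in Theorem \ref{main.monoid.thm} against Renner's criterion \cite[Theorem 2.4]{re:ratsm}. The paper even records the same observation you emphasize, namely that the cycle map plays no role in the argument.
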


\subsection{$\Q$-filtrable projective group embeddings}
We start by recalling \cite[Definition 2.2]{re:hpoly}.  
%revised after the results of the previous section.  

\begin{dfn}\label{chow.rs.emb.dfn}
%A projective group embedding 
%$\P_\epsilon(M)$ 
%is called {\em rationally smooth} (in the sense of Chow groups) 
%if and only if 
A reductive monoid $M$ with zero element 
is called {\em quasismooth} 
if, 
for any 
minimal non-zero idempotent $e\in E(M)$,  
$M_e$ satisfies the 
conditions of Theorem \ref{main.monoid.thm}. 
\end{dfn}

In other words, $M$ 
is quasismooth if and only if 
$M_e$ is an algebraic rational cell, for 
any minimal non-zero idempotent $e\in E(M)$. 

\smallskip

Now consider the projective 
group embedding $\P_\epsilon(M)=(M\setminus \{0\})/Z$ (as in Section 5.1).  
When $\k=\C$,  it is worth noting that  
$M$ is quasismooth if and only if 
$\P_\epsilon(M)$ is rationally smooth  
%Indeed, 
% this 
%the content of 
\cite[Theorem 2.5]{re:ratsm}.
%In the general case, one has to first adapt the 
%results of \cite{bri:rat}, 
%replacing rational cohomology by $l$-adic cohomology, 
%and then argue as in \cite[Theorems 2.4 and 2.5]{re:ratsm}.     

\smallskip

Next is the second main result of this section. 
%It 
%states that if $M$ is quasismooth, then 
%the associated projective embedding $\P_\epsilon(M)$ 
%is $\Q$-filtrable in the sense of 
%Section 4 (for the $T\times T$-action).   
It is an extension of %our earlier result   
\cite[Theorem 7.4]{go:cells}
to equivariant Chow groups.

\begin{thm}\label{ratsm.emb.are.qfilt.thm} 
Let $M$ be a reductive monoid with zero. 
If $M$ is quasismooth, then the 
projective group embedding $\P_\epsilon(M)$ 
is $\Q$-filtrable (as in Section 4). 
\end{thm}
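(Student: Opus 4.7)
I would verify directly the two conditions of Definition \ref{qfiltrable.def}. For the first condition, observe that $M$, being a reductive monoid, is a spherical $G\times G$-variety, so $\P_\epsilon(M)$ is a normal projective spherical embedding of $G/Z$. Spherical varieties have only finitely many $T\times T$-orbits, so in particular the fixed set $\P_\epsilon(M)^{T\times T}$ is finite. Moreover, as a normal projective $G\times G$-variety, $\P_\epsilon(M)$ admits an ample $T\times T$-linearized invertible sheaf. Theorem \ref{bbdecomp.thm} then produces, for any generic one-parameter subgroup $\lambda$ of $T\times T$, a filtrable BB-decomposition of $\P_\epsilon(M)$.

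The heart of the proof is to show that each cell in this decomposition is an algebraic rational cell. By construction, the $T\times T$-fixed points of $\P_\epsilon(M)$ are in bijection with the closed irreducible $T\times T$-invariant curves of $M$ through $0$, and hence with a quotient of $\mathcal{R}_1$. Using the $W\times W$-symmetry of $\P_\epsilon(M)$, one reduces the analysis to a finite set of standard fixed points, attached to minimal nonzero idempotents $e\in\Lambda$ (together with their $W\times W$-conjugates). For such a fixed point $[e]$, I would exhibit a $T\times T$-equivariant local model of the BB-cell $X_+([e],\lambda)$ in terms of the reductive monoid $M_e$: the expected picture is that, up to a finite $T\times T$-equivariant surjective morphism, $X_+([e],\lambda)$ is built from (an open subvariety of) $M_e$. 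Quasismoothness (Definition \ref{chow.rs.emb.dfn}) asserts precisely that $M_e$ is an algebraic rational cell for every minimal nonzero $e$; combining this with Proposition \ref{char.ratcell.prop} and the invariance of rational Chow groups under finite equivariant covers (Corollary \ref{mon.rat.chow.cor}, Corollary \ref{mon.are.rcells}), one transports the algebraic rational cell property from $M_e$ to the BB-cell $X_+([e],\lambda)$.

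The main obstacle is the explicit local identification of each BB-cell in terms of some $M_e$. This requires carefully combining the $J$-class combinatorics of the Renner monoid with a slice-style description of $\P_\epsilon(M)$ near each $T\times T$-fixed point, so that the positive-weight directions of $\lambda$ on $T_{[e]}\P_\epsilon(M)$ are matched with the attractive cone of $M_e$ at its zero. The topological analog \cite[Theorem 7.4]{go:cells} supplies the blueprint: rerun that argument with (topological) rational cells replaced by algebraic rational cells and equivariant cohomology replaced by equivariant Chow groups, relying on Proposition \ref{char.ratcell.prop}, Theorem \ref{char.thm}, and Corollary \ref{gkm.cells.cor} of Section 3 to propagate the rational cell property through the finite equivariant covers that appear.
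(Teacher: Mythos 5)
Your overall strategy matches the paper's: establish filtrability via Theorem \ref{bbdecomp.thm}, then reduce (via Proposition \ref{char.ratcell.prop}) to showing that each BB-cell has one-dimensional rational Chow group, using quasismoothness of the $M_e$. However, the step you yourself flag as ``the main obstacle'' --- the identification of each cell in terms of some $M_e$ --- is exactly the content of the proof, and it is left unproved; moreover, the mechanism you propose for it is not the right one. You posit that each cell is, ``up to a finite $T\times T$-equivariant surjective morphism,'' built from an open subvariety of $M_e$, and that the rational-cell property is transported by Corollaries \ref{mon.rat.chow.cor} and \ref{mon.are.rcells}. That is not the actual structural picture, and finite covers alone do not suffice.

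The paper instead invokes Renner's explicit description of the cells: by \cite[Lemma 4.6 and Theorem 4.7]{re:hpoly}, each cell $C_r$ (for $r=ew\in\mathcal{R}_1$) factors as $U_1\times C_e^*w\times U_2$ with $U_i$ affine spaces, which by K\"unneth reduces everything to $A_*(C_e^*)$; and by \cite[Theorem 5.1]{re:hpoly} (which is where quasismoothness enters), $C_e^*=f_eM(e)/Z$ for a certain idempotent $f_e$, where $M(e)=M_eZ$. The key point you are missing is that $f_eM(e)/Z$ is \emph{not} a finite cover or open piece of $M_e$: it is the fixed-point set $(M(e)/Z)^S$ of a one-parameter subgroup $S$ acting on the monoid $M(e)/Z$ (via \cite[Lemma 1.1.1]{bri:m}). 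One then gets $A_*(M(e)/Z)\simeq\Q$ from $M_e\sim_0 M(e)/Z$ and Corollary \ref{mon.are.rcells}, and concludes $\dim_\Q A_*((M(e)/Z)^S)\le 1$, hence $=1$, by the fixed-point inequality of Lemma \ref{dim.fix.set.lem} --- an ingredient absent from your plan. Without Renner's cell structure theorems and this fixed-point argument, the transport of the rational-cell property from $M_e$ to the BB-cells does not go through.
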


\begin{proof}
The strategy is to adapt the proof of 
\cite[Theorem 7.4]{go:cells} in light of 
Proposition \ref{char.ratcell.prop} and 
Theorem \ref{main.monoid.thm}. 
Recall that, by \cite[Theorem 3.4]{re:hpoly}, 
$\P_\epsilon(M)$ comes equipped with a 
BB-decomposition 
$$
\P_\epsilon(M)=\bigsqcup_{r\in \mathcal{R}_1} C_r,
$$
where $\mathcal{R}_1$ identifies to $\P_\epsilon(M)^{T\times T}$. 
(In fact these cells are $B\times B$-invariant, where 
$B$ is a Borel subgroup of $G$.)  
Given that $\P_\epsilon (M)$ is normal,  
projective, and $\mathcal{R}_1$ is finite, 
this BB-decomposition is filtrable 
(Theorem \ref{bbdecomp.thm}). 
So we just need to show that these cells  
are algebraic rational cells. 
Furthermore, since the $C_r$ 
are affine $T\times T$-varieties 
with an attractive fixed point $[r]$,  
Proposition \ref{char.ratcell.prop} reduces the 
proof to showing that $A_*(C_r)\simeq \Q$.   

%$C_r$ is an algebraic rational cell. 
\smallskip

Bearing this in mind, we delve a bit further 
into the structure of these cells. 
By \cite[Lemma 4.6 and Theorem 4.7]{re:hpoly}, 
each $C_r$ is isomorphic to $$U_1\times C_r^*\times U_2,$$ 
where the $U_i$ are affine spaces. Moreover, writing 
$r\in \mathcal{R}_1$ as $r=ew$, with $e\in E_1(\overline{T})$
and $w\in W$, yields $C_r^*=C_e^*w$. 
Hence, 
%in light 
%of Proposition \ref{char.ratcell.prop}. 
%
%and 
by the Kunneth formula (which holds, because the $U_i$'s 
are affine spaces), we are further reduced 
to showing that $A_*(C_e^*)\simeq \Q$, 
for $e\in E_1(\overline{T})$. 

\smallskip

Now we call the reader's attention to \cite[Theorem 5.1]{re:hpoly}. 
It states that if $M$ is quasismooth, then 
$$
C_e^*=f_eM(e)/Z,
$$
for some unique $f_e\in E(\overline{T})$, where $M(e)=M_eZ$, 
and $M_e$ is reductive monoid with $e$ as its zero. 
By hypothesis, we know that 
$M_e$ is an algebraic rational cell, that is, 
$A_*(M_e)\simeq \Q$. 
Since $M(e)/Z$ is a reductive monoid with $[e]$ as 
its zero, and $M_e\sim_0 M(e)/Z$, 
Corollary \ref{mon.are.rcells} 
yields $A_*(M(e)/Z)\simeq \Q$.  
Now, by \cite[Lemma 1.1.1]{bri:m}, one can 
find a one-parameter subgroup $\lambda:\G\to T$, 
with image $S$, such that $\lambda(0)=f$ and   
$$f_eM(e)/Z=(M(e)/Z)^S.$$ 
That is, 
$f_eM(e)/Z$ is the $S$-fixed point set of $M(e)/Z$. 
But now we invoke Lemma \ref{dim.fix.set.lem} to get 
$$\dim_\Q A_*((M(e)/Z)^S)\leq \dim_\Q A_*(M(e)/Z)=1.$$
Hence, {\em a fortiori} 
$$\dim_\Q A_*((M(e)/Z)^S)=\dim_\Q A_*(f_eM(e)/Z)=
\dim_\Q A_*(C_e^*)=1.$$   
This shows that $A_*(C^*_e)=\Q$, concluding the argument. 
\end{proof}

It is well-known that for projective simplicial  
toric varieties (equivalently, rationally smooth 
projective toric varieties) the equivariant cycle map 
is an isomorphism over $\Q$. 
Below we extend this result to 
all rationally smooth projective group embeddings.

\begin{cor}\label{cycle.mon.cor}
Let $\k=\C$. If $M$ is a quasismooth monoid with zero, 
then the equivariant cycle map   
$$cl^T_{\P_\epsilon(M)}: A_*^T(\P_\epsilon(M))\to H^T_*(\P_\epsilon(M))$$  
is an isomorphism of free $S$-modules. 
%Consequently,  
%
Moreover, the usual cycle map 
%and 
$$
cl_{\P_\epsilon(M)}: A_*(\P_\epsilon(M))\to H_*(\P_\epsilon(M))
$$
%is also an 
is an isomorphism of $\Q$-vector spaces.   
%Moreover, 
%this isomorphisms induce 
\end{cor}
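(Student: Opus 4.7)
The plan is to exploit the algebraic $\Q$-filtrable structure established in Section 4 together with its topological counterpart from \cite{go:cells}, and to compare the two via naturality of the equivariant cycle map. By Theorem \ref{ratsm.emb.are.qfilt.thm}, $\P_\epsilon(M)$ is $\Q$-filtrable, and Theorem \ref{qfilt.thm} gives that $A^T_*(\P_\epsilon(M))$ is a free $S$-module of rank $|\mathcal{R}_1|$, freely generated by the cell-closure classes $[\overline{C_r}]$ for $r\in\mathcal{R}_1$. Since $\k=\C$, the assumption that $M$ is quasismooth implies via \cite[Theorem 2.5]{re:ratsm} that $\P_\epsilon(M)$ is rationally smooth. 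Hence the topological counterpart of Theorem \ref{qfilt.thm} (namely \cite[Theorem 7.4]{go:cells}) applies and yields that $H^T_*(\P_\epsilon(M))$ is likewise a free $S$-module of rank $|\mathcal{R}_1|$, with basis given by the Borel--Moore fundamental classes of the same cell closures $\overline{C_r}$.

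Next, I would run an induction on the length of the filtration $\Sigma_0\subset\Sigma_1\subset\cdots\subset\Sigma_m=\P_\epsilon(M)$, where $\Sigma_j\setminus\Sigma_{j-1}=C_{r_j}$. Proposition \ref{qfiltrable.main} provides the short exact sequence
\[
0\to A^T_*(\overline{C_{r_j}})\to A^T_*(\Sigma_j)\to A^T_*(\Sigma_{j-1})\to 0,
\]
and rational smoothness of $\P_\epsilon(M)$, together with the topological arguments of \cite{go:cells}, yields the analogous short exact sequence in $H^T_*$. The equivariant cycle map produces a commutative ladder between these two sequences, and the five-lemma reduces the inductive step to showing that $cl^T$ is an isomorphism on each closed cell $\overline{C_{r_j}}$.

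The main obstacle is precisely this last assertion: comparing algebraic and topological $T$-equivariant invariants of a single algebraic rational cell. Here I would use that $\overline{C_{r_j}}$ is an affine $T\times T$-variety with attractive fixed point $[r_j]$. By Proposition \ref{char.ratcell.prop}, $A^T_*(\overline{C_{r_j}})$ is a free $S$-module of rank one, generated by the fundamental class. On the topological side, rational smoothness of $\overline{C_{r_j}}$ (inherited from $\P_\epsilon(M)$ via the local structure of the cells recorded in \cite[Theorems 4.7 and 5.1]{re:hpoly}) together with the existence of a $T$-equivariant contraction to $[r_j]$ guarantees that $H^T_*(\overline{C_{r_j}})\simeq S$, also generated by the fundamental class. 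Since the cycle map is $S$-linear and sends fundamental class to fundamental class, it is an isomorphism. Assembling the inductive pieces establishes that $cl^T_{\P_\epsilon(M)}$ is an isomorphism of free $S$-modules.

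Finally, to deduce the non-equivariant statement, I would quotient by the augmentation ideal. By Theorem \ref{Tequiv.thm} we have $A_*(\P_\epsilon(M))\simeq A^T_*(\P_\epsilon(M))/\Delta\, A^T_*(\P_\epsilon(M))$, and since $A^T_*(\P_\epsilon(M))$ is free over $S$ of rank $|\mathcal{R}_1|$, this quotient is a $\Q$-vector space of dimension $|\mathcal{R}_1|$. The analogous identification in rational Borel--Moore homology holds for rationally smooth $T$-varieties with finite fixed-point set and free $H^T_*$ (by the results of \cite{go:cells}). Compatibility of the cycle maps under these two quotient identifications, together with the equivariant isomorphism just established, delivers the non-equivariant isomorphism $cl_{\P_\epsilon(M)}$.
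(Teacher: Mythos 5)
Your overall strategy --- induction on the filtration, a commutative ladder of localization sequences, and the five lemma --- is the same as the paper's, and your first paragraph (freeness of $A^T_*(\P_\epsilon(M))$ via Theorems \ref{ratsm.emb.are.qfilt.thm} and \ref{qfilt.thm}, freeness of $H^T_*(\P_\epsilon(M))$ via rational smoothness and \cite[Theorem 7.4]{go:cells}) is correct. But the inductive step is set up incorrectly. The localization sequence attached to the closed subvariety $\Sigma_{j-1}\subset\Sigma_j$ with open complement $C_{r_j}$ is
$$0\to A^T_*(\Sigma_{j-1})\to A^T_*(\Sigma_j)\to A^T_*(C_{r_j})\to 0,$$
with the \emph{open} cell on the right; there is no exact sequence of the form $0\to A^T_*(\overline{C_{r_j}})\to A^T_*(\Sigma_j)\to A^T_*(\Sigma_{j-1})\to 0$, since $\overline{C_{r_j}}$ and $\Sigma_{j-1}$ overlap and neither is the complement of the other. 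So the reduction must be to the open cells $C_r$, not to their closures. Moreover, the assertion that $A^T_*(\overline{C_{r_j}})$ is free of rank one is false in general: $\overline{C_{r_j}}$ is a projective, not affine, $T$-variety containing several fixed points, Proposition \ref{char.ratcell.prop} does not apply to it, and its equivariant Chow group has rank equal to the number of cells it contains. The paper's proof works with the open cells throughout: Theorem \ref{ratsm.emb.are.qfilt.thm} gives $A^T_*(C_r)\simeq S$, rational smoothness of each cell gives $H^T_{*,c}(C_r)\simeq S$ in (equivariant) Borel--Moore homology, the cycle map matches the two fundamental classes, and the five lemma applied to the ladder of localization sequences (the homology sequence being short exact because odd Borel--Moore homology vanishes) completes the induction.

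One further remark: your first paragraph already contains a shorter correct argument that you do not exploit. Once $A^T_*(\P_\epsilon(M))$ and $H^T_*(\P_\epsilon(M))$ are known to be free $S$-modules with bases given by the classes of the \emph{same} cell closures $\overline{C_r}$, the $S$-linear cycle map carries one basis to the other (it sends the Chow class of a $T$-stable closed subvariety to its fundamental class) and is therefore an isomorphism; the non-equivariant statement then follows by reducing modulo $\Delta$, exactly as in your last paragraph.
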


\begin{proof}
By %\ref{ratsm.emb.are.qfilt.thm} and 
\cite[Theorem 7.4]{go:cells} 
$\P_\epsilon(M)$ has no homology in odd degrees, 
and each cell is rationally smooth, so 
$H_{*,c}(C_r)\simeq \Q$ and $H^T_{*,c}(C_r)\simeq S$. 
%$C_r$ satisfies $H^T_{*,c}(C_r)\simeq S$. 
Now Theorem \ref{ratsm.emb.are.qfilt.thm}
implies that the cycle maps 
$cl_{C_r}:A_*(C_r)\to H_{*,c}(C_r)$ and 
$cl^T_{C_r}:A^T_*(C_r)\to H^T_{*,c}(C_r)$ 
are isomorphisms. 
%(e.g. by the graded 
%Nakayama lemma).   
Arguing by induction on the length of the 
filtration concludes the proof. 
%yields the result. 
\end{proof}

In \cite{re:hpoly} and \cite{re:ratsm}, Renner has 
computed the $H$-polynomial of a quasismooth monoid. 
This polynomial counts the number of 
algebraic rational cells (of each dimension) 
that appear in the BB-decomposition of $\P_\epsilon(M)$.
In particular, when $\P_\epsilon(M)$ is simple 
(i.e. it contains a unique $G\times G$-orbit), 
\cite{re:hpoly} shows that the number of cells 
of dimension $k$ equals the number of cells of dimension $n-k$, 
where $n=\dim \P_\epsilon(M)$. 
By Theorem \ref{qfilt.thm}, this yields  
$$\dim_\Q A_k(\P_\epsilon(M))=\dim_\Q A_{n-k}(\P_\epsilon(M)).$$ 
Over the complex numbers, this is equivalent to the fact that 
$\P_\epsilon(M)$ satisfies Poincar\'e duality for 
rational singular cohomology. 
%We shall see next 
%that this phenomenon also amounts to 
%Poincar\'e duality in operational Chow groups.   

\section{Connections with topology: spherical varieties} 
In this section we work over the complex numbers.  
The aim is to relate the results of this paper 
with those of \cite{go:cells} in 
the case of spherical varieties. 

\smallskip

The following result is a particular case 
of \cite[Theorem 3]{to:linear}.  

\begin{thm}\label{totaro.cycle.thm}
Let $\Gamma$ be connected solvable linear algebraic group. 
For any $\Gamma$-variety $Y$ with a finite number of orbits, 
the natural map 
$$A_i(Y)\otimes \Q\longrightarrow W_{-2i}H^{BM}_{2i}(Y,\Q),$$ from the Chow groups into the smallest subspace of 
Borel-Moore homology with respect to the weight filtration 
is an isomorphism. \hfill $\square$
\end{thm}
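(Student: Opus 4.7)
The statement is declared a particular case of \cite[Theorem 3]{to:linear}, so the plan is to reduce to Totaro's theorem by verifying that $Y$ is a linear scheme in the sense of \cite{jann:kth, jo:linear, to:linear}. Once linearity of $Y$ is in hand, the conclusion is immediate from that theorem.

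The reduction is furnished by Subsection 2.2. Namely, fix a maximal torus $T\subset \Gamma$. Item (iii) of the theorem on $T$-linear schemes asserts that if $\Gamma$ is connected solvable and acts on $Y$ with finitely many orbits, then $Y$ is $T$-linear. A $T$-linear scheme is, by construction, assembled from $T$-modules through open--closed decompositions and finite disjoint unions; forgetting the equivariant structure, this recovers exactly Totaro's notion of a linear scheme. Thus $Y$ is linear, and \cite[Theorem 3]{to:linear} applies verbatim to yield the asserted isomorphism $A_i(Y)\otimes \Q\simeq W_{-2i}H^{BM}_{2i}(Y,\Q)$.

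As a self-contained alternative, linearity of $Y$ can be established directly by induction on the number $N$ of $\Gamma$-orbits. The base case $N=1$ follows from Rosenlicht's theorem: every homogeneous space of a connected solvable group is isomorphic as a variety to some $\G^{a}\times \A^{b}$, which is linear. For the inductive step, Borel's fixed point theorem produces a closed $\Gamma$-orbit $Z\subset Y$, and both $Z$ and its open complement $U=Y\setminus Z$ carry actions of $\Gamma$ with strictly fewer orbits; by induction both are linear, and the stability of the class of linear schemes under open--closed decompositions then yields linearity of $Y$. In either approach the only genuine content is the verification of linearity, and once that is secured there is no further obstacle: Totaro's theorem does the rest.
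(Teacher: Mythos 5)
Your proposal is correct and follows the paper's route exactly: the paper offers no argument beyond citing \cite[Theorem 3]{to:linear}, and your verification that $Y$ is a linear scheme (via item (iii) of the $T$-linearity theorem in Subsection 2.2, after forgetting the equivariant structure) is precisely the reduction that citation presupposes. One small correction to your self-contained alternative: Borel's fixed point theorem concerns complete varieties and is not what produces a closed orbit here; rather, an orbit of minimal dimension is automatically closed (its boundary is a union of strictly lower-dimensional orbits), which is all the induction needs.
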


\smallskip

Let $G$ be a connected reductive 
linear algebraic group with Borel 
subgroup $B$ and maximal torus $T\subset B$.  
Recall that given a one-parameter subgroup 
$\lambda:\C^*\to T$, we can define  
$$G(\lambda)=\{g\in G\,|\,\lambda(t)g\lambda(t)^{-1} \,
{\rm \;has\; a\; limit\; as \;}t\to 0\}.$$
It is well-known that $G(\lambda)$ is a parabolic 
subgroup of $G$ with unipotent radical 
$
R_uG(\lambda)=\{g\in G\,|\,\lim_{t\to 0}\lambda(t)g\lambda(t)^{-1}=1\}.
$
Moreover, the centralizer $C_G(\lambda)$ of the image 
of $\lambda$ is connected,  
%(a Levi subgroup of $G(\lambda)$), 
and the product 
morphism $R_uG(\lambda)\times C_G(\lambda)\to G(\lambda)$
is an isomorphism of varieties. Also, 
the parabolic subgroups $G(\lambda)$
and $G(-\lambda)$ are opposite. 
Finally, $G(\lambda)=B$ 
if and only if $\lambda$ lies 
in the interior of the Weyl chamber 
associated with $B$. 
See e.g. \cite[Theorem 13.4.2]{sp:lag}.

\smallskip

Next we show that algebraic rational 
cells are naturally found 
on rationally smooth spherical varieties. 

\begin{thm}\label{sphrsm_implies_arc.lem}
Let $X$ be a $G$-spherical variety 
with an attractive $T$-fixed point $x$. 
Let $X_x$ be the unique open affine 
$T$-stable neighborhood of $x$. 
If $X$ is rationally smooth at $x$, then 
$(X_x,x)$ is an algebraic rational cell. 
\end{thm}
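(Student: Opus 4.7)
The plan is to apply Proposition \ref{char.ratcell.prop}: it suffices to show $A_*(X_x) \simeq A_*(\A^n) \simeq \Q$, concentrated in degree $n = \dim X_x$. I would compute this via complex topology, using Totaro's Theorem \ref{totaro.cycle.thm} to identify the rational Chow groups with the lowest-weight part of Borel--Moore homology.

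First I would collect three structural facts about $X_x$.
(i) As a non-empty open subset of the irreducible normal variety $X$, the neighborhood $X_x$ is itself irreducible of dimension $n$.
(ii) For any $\lambda \in \sigma_x^0$, the attractive property extends the $\G$-action to a morphism $\A^1 \times X_x \to X_x$ contracting $\{0\}\times X_x$ to $x$. Complex-analytically this produces a deformation retraction of $X_x$ onto $\{x\}$, so $H^0(X_x,\Q) = \Q$ and $H^k(X_x,\Q) = 0$ for $k \neq 0$.
(iii) The rationally smooth locus of $X$ is open, so it contains a neighborhood $U$ of $x$. For any $y \in X_x$, we have $\lambda(t)\, y \in U$ for $t$ sufficiently small; since each $\lambda(t)$ is a biregular automorphism of $X$ (and rational smoothness is invariant under automorphisms), $y$ itself must be rationally smooth. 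Thus $X_x$ is rationally smooth throughout, i.e.\ a rational cohomology manifold.

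Combining (ii) and (iii), Poincar\'e duality yields $H^{BM}_{2n-k}(X_x,\Q) \simeq H^k(X_x,\Q)$, so $H^{BM}_{2n}(X_x,\Q) \simeq \Q$ and $H^{BM}_j(X_x,\Q) = 0$ for $j \neq 2n$. To invoke Totaro, I would choose a regular $\lambda \in \sigma_x^0$ (which exists because the non-empty open cone $\sigma_x^0$ cannot be contained in the union of the Weyl walls), and set $B := G(\lambda)$, a Borel subgroup containing $T$. Then $B$ preserves $X_x$: the torus $T$ does by hypothesis, while for $u \in R_uG(\lambda)$ the identity $\lambda(t)\, u\, y = (\lambda(t)\,u\,\lambda(t)^{-1})\,\lambda(t)\, y$ gives $\lim_{t \to 0}\lambda(t)\, u\, y = 1 \cdot x = x$. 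Sphericity of $X$ ensures that $B$ has finitely many orbits on $X$, and hence on the $B$-stable open subset $X_x$.

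Finally, Theorem \ref{totaro.cycle.thm} applied with $\Gamma = B$ and $Y = X_x$ gives $A_i(X_x) \simeq W_{-2i} H^{BM}_{2i}(X_x,\Q)$, which vanishes for $i \neq n$; for $i = n$ the fundamental class $[X_x]$ is pure of weight $-2n$, so $W_{-2n} H^{BM}_{2n}(X_x,\Q) = H^{BM}_{2n}(X_x,\Q) = \Q$. Therefore $A_*(X_x) \simeq A_*(\A^n)$, and Proposition \ref{char.ratcell.prop} concludes that $(X_x,x)$ is an algebraic rational cell. The main obstacle I expect is step (iii)---propagating rational smoothness from $x$ throughout $X_x$---which rests on the openness of the rationally smooth locus together with the $\G$-equivariance of rational smoothness; both are standard but non-trivial facts about rational cohomology manifolds that will require justification or citation.
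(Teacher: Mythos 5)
Your proof is correct and follows essentially the same route as the paper: propagate rational smoothness from $x$ to all of $X_x$ via the contracting action, apply Theorem \ref{totaro.cycle.thm} with $\Gamma=B=G(\lambda)$ acting with finitely many orbits, and identify the lowest-weight Borel--Moore homology as $\Q$ concentrated in the top degree. The only cosmetic difference is that the paper computes $H^{BM}_*(X_x)$ by citing the ``cone over a rational cohomology sphere'' description of rational cells from \cite[Corollary 3.11]{go:cells}, whereas you use Poincar\'e duality plus contractibility directly; your extra care with step (iii) and with the $B$-invariance of $X_x$ simply fills in details the paper leaves implicit.
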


\begin{proof}
Because $x$ is attractive, we may choose 
$\lambda$ such that 
$X_x=X_+(x,\lambda)$ and $G(\lambda)=B$. 
Since $X$ is 
rationally smooth at $x$, so 
is the open subset $X_x$. Moreover, 
$X_x$ is rationally 
smooth everywhere, and so 
$X_x$ is a rational cell \cite[Definition 3.4]{go:cells}. 
By Theorem \ref{totaro.cycle.thm} 
we have 
$$A_i(X_x)\simeq W_{-2i}H^{BM}_{2i}(X_x,\Q)
\simeq H^{2i}_c(X_x,\Q)=
\left\{
\begin{array}{cc}
\Q & {\rm \;if \,} i=\dim_{\C}X_x\\
 0 & {\rm \;otherwise,}
\end{array}
\right.$$
where the last two identifications   
follow from the fact that 
$X_x$ is a cone over a rational cohomology 
sphere \cite[Corollary 3.11]{go:cells}.  
\end{proof}

\smallskip

Let $X$ be a $G$-spherical variety. Recall 
that $X^T$ is finite. For convenience, 
we use the following nomenclature. We say that 

\begin{enumerate}[(a)]
\item $X$ has an 
{\em algebraic} $\Q$-filtration, 
%or said to have an algebraic $\Q$-filtration, 
if it satisfies Definition \ref{qfiltrable.def} 
for some generic one-parameter subgroup $\lambda$ of $T$. 
%The associated $\Q$-filtration is called algebraic. 

\item %A $T$-variety $X$ is 
%topologically $\Q$-filtrable, 
%or said to have 
$X$ has a 
{\em topological} $\Q$-filtration, 
if there exists a generic 
one-parameter subgroup 
$\lambda:\C^*\to T$ for which the associated 
BB-decomposition of $X$ is filtrable, and consists of 
%$T$-invariant 
{\em rational cells} \cite{go:cells}.
%The associated $\Q$-filtration is called topological. 
\end{enumerate}
%If $\lambda$ in either (a) or (b) can be chosen so that 
%$G(\lambda)=B$, the corresponding $\Q$-filtration is 
%called {\em good}. 

%\begin{prop}
%Let $X$ be a $G$-spherical variety with 
%$T$-fixed points $x_1,\ldots, x_j$. 
%Let $\lambda:\C^*\to T$ be a generic one-parameter subgroup.
%Then the cells 
%$$
%X_+(x_j,\lambda):=
%\{x\in X\,|\, \lim_{t\to 0}\lambda(t)x=x_i\}
%$$
%are $T$-linear varieties. 
%\end{prop}
%
%\begin{proof}
%Consider the parabolic subgroup $G(\lambda)$. 
%Then each cell $X_+(x_j,\lambda)$ is $G(\lambda)$-invariant. 
%\end{proof}

\begin{thm}\label{sp_rs_impl_crs}
Let $X$ be a $G$-spherical variety. 
If $X$ has a topological $\Q$-filtration, 
then this filtration is also an algebraic $\Q$-filtration. 
\end{thm}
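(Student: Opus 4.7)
The plan is to show that the same generic one-parameter subgroup $\lambda$ witnessing the topological $\Q$-filtration also witnesses an algebraic $\Q$-filtration. Since the finiteness of $X^T$ and the filtrability of the BB-decomposition associated to $\lambda$ are common to both notions (Definition \ref{filtable.dfn} and Definition \ref{qfiltrable.def}), the whole content is to verify that each BB-cell $C_i := X_+(x_i, \lambda)$, known by hypothesis to be a topological rational cell in the sense of \cite{go:cells}, is in fact an algebraic rational cell in the sense of Definition \ref{rationalcell.dfn}.

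First, I would fix a Borel subgroup $B$ of $G$ with $T \subset B \subset G(\lambda)$; such a $B$ exists because $G(\lambda)$ is a parabolic containing $T$, and any parabolic containing a torus contains a Borel through that torus. Each cell $C_i$ is $G(\lambda)$-stable by standard BB theory, hence $B$-stable. Moreover, $x_i$ is attractive in $C_i$ for the full $T$-action: the weights of $T$ on $T_{x_i} C_i$ are precisely those characters that pair strictly positively with $\lambda$, so they lie in the open half-space $\{\chi : \langle \lambda, \chi\rangle > 0\}$ of $\Delta_\R$. Finally, since $X$ is $G$-spherical, $B$ acts on $X$ with finitely many orbits, and therefore on each $B$-stable subvariety $C_i$ with finitely many orbits.

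Next, I would adapt the argument in Theorem \ref{sphrsm_implies_arc.lem} cell by cell. Totaro's theorem (Theorem \ref{totaro.cycle.thm}) applied to the $B$-variety $C_i$ yields
$$
A_j(C_i)_\Q \;\simeq\; W_{-2j} H^{BM}_{2j}(C_i, \Q).
$$
On the other hand, being a topological rational cell, $C_i$ is rationally smooth and, by \cite[Corollary 3.11]{go:cells}, a cone over a rational cohomology sphere. Exactly as in the proof of Theorem \ref{sphrsm_implies_arc.lem}, this cone structure identifies $W_{-2j} H^{BM}_{2j}(C_i, \Q)$ with $H^{2j}_c(C_i, \Q)$, which equals $\Q$ for $j = \dim_\C C_i$ and vanishes otherwise. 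Thus $A_*(C_i)$ is one-dimensional and concentrated in the top degree, and Lemma \ref{contractible.lem} gives that $(C_i, x_i)$ is an algebraic rational cell. The filtration structure then carries over verbatim, so $X$ is algebraically $\Q$-filtrable with respect to the same $\lambda$.

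The main obstacle, as already encountered in Theorem \ref{sphrsm_implies_arc.lem}, is the intermediate identification of the top-weight piece $W_{-2j} H^{BM}_{2j}(C_i, \Q)$ with $H^{2j}_c(C_i, \Q)$: this requires that the Borel--Moore homology of $C_i$ be pure, which ultimately rests on the cone-over-a-rational-sphere description of rationally smooth cells from \cite[Corollary 3.11]{go:cells} and a mixed Hodge structure computation on that cone. Once this ingredient is granted, everything else is a cell-by-cell transcription of the attractive-neighborhood analysis already carried out in Theorem \ref{sphrsm_implies_arc.lem}, combined with Lemma \ref{contractible.lem}.
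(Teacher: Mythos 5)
Your proposal is correct and follows essentially the same route as the paper's own proof: reduce to showing each BB-cell is an algebraic rational cell, observe the cells are $G(\lambda)$-stable (the paper verifies this by the explicit limit computation $\lambda(t)uh\cdot x=\lambda(t)u\lambda(t)^{-1}h\lambda(t)\cdot x$, which you assert as standard), use sphericity to get finitely many Borel orbits on each cell, apply Totaro's theorem together with the cone-over-a-rational-cohomology-sphere description from \cite[Corollary 3.11]{go:cells}, and conclude via Lemma \ref{contractible.lem}. The purity concern you flag at the end is handled in the paper exactly as you suggest, by the same citation, so there is no gap.
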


\begin{proof}
Let $X^T=\{x_1,\ldots, x_m\}$. By assumption, there exists 
a generic one-parameter subgroup such that $X^{\lambda}=X^T$,  
and the cells $X_j:=X_+(x_j,\lambda)$ 
are rational cells. 
Consider the parabolic subgroup $G(\lambda)$. 
We claim that 
the cells $X_j$ 
are invariant under $G(\lambda)$. 
Indeed, 
$G(\lambda)=R_u(\lambda)\times C_G(\lambda)$, 
and $C_G(\lambda)$, being connected, fixes 
each $x_j\in X^\lambda$. 
Now let $x\in X_j$
and write $g\in G(\lambda)$ as $g=uh$, 
%Lettting $g=uh$, 
with $u\in R_u(\lambda)$ and $h\in C_G(\lambda)$.  
Then 
$$\lambda(t)g\cdot x=
\lambda(t)uh\lambda(t)^{-1}\lambda(t)\cdot x=
\lambda(t)u\lambda(t)^{-1}h\lambda(t)\cdot x.
$$  
Taking limits at $0$ gives the claim. 
  %and so 
%Hence 
%-invariant. 
Because $X$ is spherical, it contains only 
finitely many orbits of any Borel subgroup 
of $G$. Therefore, a Borel subgroup of $G(\lambda)$
has finitely many orbits in $X_j$. 
%Indeed, 
%
%
%First, suppose that $\lambda$ is such that 
%and 
%$G(\lambda)=B$. 
%It follows that the 
%associated rational cells 
%$X_j:=X_+(x_j,\lambda)$ are $B$-invariant. As $X$ has 
%finitely many $B$-orbits, so does each $X_j$. 
Applying Theorem \ref{totaro.cycle.thm} 
to each $X_j$ yields 
$$A_i(X_j)\simeq 
W_{-2i}H^{BM}_{2i}(X_j,\Q)
\simeq H^{2i}_c(X_j,\Q)=
\left\{
\begin{array}{cc}
\Q \;& {\rm if \,} i=\dim_{\C}{X_j}\\
 0 \;& {\rm otherwise,}
\end{array}
\right.
$$
noting that 
$X_j$ is a cone over a rational cohomology 
sphere \cite[Corollary 3.11]{go:cells}. 
Therefore, by Lemma \ref{contractible.lem}, 
the cells $X_j$ are algebraic rational cells. 
%
%Finally, suppose that $G(\lambda)\neq B$. Since 
%$C_G(\lambda)$ is connected, it acts trivially on 
%$X^\lambda=X^T$. Hence, the rational cells $X_j:=X_+(x_j,\lambda)$ are $G(\lambda)$-invariant. 
%Because $X$ is spherical, it contains only 
%finitely many orbits of any Borel subgroup 
%of $G$. Therefore, a Borel subgroup of $G(\lambda)$
%has finitely many orbits in $X_j$. So 
%Theorem \ref{totaro.cycle.thm} applies to $X_j$. 
%Now arguing as in the previous part 
This concludes the proof. 
\end{proof}

\begin{rem}
Let $X$ be a $G$-spherical variety, and let $\lambda$ 
be a generic one-parameter subgroup. Then the argument 
above shows that the cells $X_+(x_j,\lambda)$ are 
$T'$-linear varieties, where $T'\subset G(\lambda)$ 
is a maximal torus of $G$.   
\end{rem}

%\smallskip
%
%Inspired by 
%Theorem \ref{ratsm.emb.are.qfilt.thm} 
%and Corollary \ref{cycle.mon.cor}, 
%the question we wish to address is the following. 
%Let $X$ be a $G$-spherical variety. 
%Suppose that $X$ has a topological $\Q$-filtration. 
%Under what conditions is such filtration 
%also algebraic? 
%Recall that we are mainly interested in the 
%singular case, for if 
%$X$ is smooth and projective, 
%then $X$ is $T$-cellular (Theorem \ref{bbdecomp.thm}), 
%thus $\Q$-filtrable both topologically and algebraically. 
%Furthermore, the cycle maps $cl_X^T:A^T_*(X)\to H^T_*(X)$ 
%and $cl_X:A^*(X)\to H^*(X)$ are isomorphisms (see e.g. 
%\cite[Corollary 3.2.2]{bri:eqchow}.  
%
%\smallskip
%

Arguing by induction on the length of the filtration, 
using the fact that a $T$-variety with a 
topological $\Q$-filtration has no (co)homology 
in odd degrees, gives immediately the following. 

\begin{cor}\label{qfiltrable.sph.thm}
Let $X$ be a spherical $G$-variety with a  
topological $\Q$-filtration, say  
$
\emptyset=\Sigma_0\subset \Sigma_1\subset \ldots \subset \Sigma_m=X.  
$ 
Then, for every $j$, both cycle maps, 
$cl_{\Sigma_j}:A_*(\Sigma_j)\to H_{*}(\Sigma_j)$ 
and 
$cl^T_{\Sigma_j}:A^T_*(\Sigma_j)\to H^T_{*}(\Sigma_j)$
are isomorphisms. \qed
%%Moreover, ${\rm op}A^*_T(\Sigma_j)\simeq H^*_T(\Sigma_j)$,
%%as free $S$-modules. 
%In particular, $A^*_T(\Sigma_j)$ is a free $S$-module 
%and the operational Chow ring $A^*_T(X_i)$ 
%is isomorphic to $H^*_T(X_i)$. If moreover $X$ 
%is $T$-skeletal, then $A^*_T(X_i)\simeq PP^*_T(X_i)$.   
%\hfill $\square$
\end{cor}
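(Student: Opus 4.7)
The plan is to induct on the length $j$ of the filtration $\emptyset = \Sigma_0 \subset \cdots \subset \Sigma_m = X$. The base case $j=0$ is trivial. For the inductive step, I would exploit the open--closed decomposition $\Sigma_j = \Sigma_{j-1} \sqcup C_j$, where $C_j = X_+(x_j,\lambda)$ is the $j$-th cell. Theorem~\ref{sp_rs_impl_crs} promotes the topological $\Q$-filtration to an algebraic one, so each $C_j$ is simultaneously a topological rational cell and an algebraic rational cell. Consequently $A_*(C_j)$ is one-dimensional concentrated in top degree by Lemma~\ref{contractible.lem}, while $H^{BM}_{2*}(C_j)$ is one-dimensional in the same degree with vanishing odd-degree Borel--Moore homology by \cite[Corollary~3.11]{go:cells}, and the cycle map between them is (necessarily) an isomorphism.

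Next I would assemble the commutative ladder
$$
\xymatrix{
A_i(\Sigma_{j-1}) \ar[r] \ar[d]^{\cong} & A_i(\Sigma_j) \ar[r] \ar[d]^{cl_{\Sigma_j}} & A_i(C_j) \ar[r] \ar[d]^{\cong} & 0 \\
0 \ar[r] & H^{BM}_{2i}(\Sigma_{j-1}) \ar[r] & H^{BM}_{2i}(\Sigma_j) \ar[r] & H^{BM}_{2i}(C_j) \ar[r] & 0
}
$$
whose bottom row is short exact because the topological $\Q$-filtration hypothesis forces $H^{BM}_{2i-1}(\Sigma_{j-1}) = 0$, decoupling the long exact sequence. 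The outer vertical maps are isomorphisms by induction and by the previous paragraph, so a four-lemma chase shows that $cl_{\Sigma_j}$ is surjective. The main obstacle is injectivity, since the top row is only right-exact and a kernel cannot be chased directly. I would resolve this via a dimension count: Theorems~\ref{sp_rs_impl_crs} and~\ref{qfilt.thm} give $\dim_\Q A_*(\Sigma_j) = |\Sigma_j^T|$, while the same count for $\dim_\Q H^{BM}_{2*}(\Sigma_j)$ follows inductively from the exact bottom row. Equality of finite dimensions together with surjectivity forces $cl_{\Sigma_j}$ to be an isomorphism.

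For the equivariant statement I would replicate the argument with equivariant Chow groups and equivariant Borel--Moore homology. By Theorem~\ref{qfilt.thm}, $A^T_*(\Sigma_j)$ is free over $S$ of rank $|\Sigma_j^T|$; the corresponding freeness of $H^T_*(\Sigma_j)$ is among the topological $\Q$-filtration results of \cite{go:cells}. The $S$-linear cycle map $cl^T_{\Sigma_j}$ reduces modulo the graded maximal ideal of $S$ to the non-equivariant cycle map, which we have just proven to be an isomorphism. A standard application of the graded Nakayama lemma to an $S$-linear map between free $S$-modules of the same rank then upgrades this to an isomorphism at the equivariant level, completing the inductive step. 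The delicate technical point running through the argument is the propagation of the vanishing of odd-degree (equivariant) Borel--Moore homology along the filtration, which is precisely what allows the long exact sequences to break into short ones and makes the dimension count available.
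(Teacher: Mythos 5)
Your proof is correct and takes essentially the same route as the paper: the paper's entire argument is the one-line remark that one inducts on the length of the filtration using the vanishing of odd-degree (co)homology, and your write-up supplies exactly the details that remark suppresses (the split long exact sequences in Borel--Moore homology, the four-lemma plus dimension count via Theorems \ref{sp_rs_impl_crs} and \ref{qfilt.thm} for the non-equivariant case, and the graded Nakayama upgrade between free $S$-modules of equal rank for the equivariant one). No gaps.
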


%Finally, 
We should remark that Theorem \ref{sp_rs_impl_crs} 
provides another proof of Theorem \ref{ratsm.emb.are.qfilt.thm}. 
%Indeed, using  
%\cite[Theorem 3.4]{re:hpoly} 
%one checks that the BB-decomposition of 
%$\P_\epsilon(M)$ is a good topological $\Q$-filtration.  
However, in the case of group embeddings, 
the approach taken in Section 5 is more intrinsic,  
for it uses the rich structure of the Chow groups and 
the fine combinatorial structure of algebraic monoids. 
Notice that the results of Section 5 are independent of 
Theorem \ref{totaro.cycle.thm}. This shows how the 
notion of algebraic rational cells 
is well adapted to embedding theory, and 
opens the way for further work in this direction. 
For instance, the results of this paper, together with those of \cite{go:tlin}, 
yield some characterizations of Poincar\'e duality for the 
equivariant operational Chow {\em rings} 
%of rationally smooth group embeddings. 
of projective group embeddings, and related spherical varieties \cite{go:poinc}. This will appear elsewhere. 
%\cite{go:opk} for some results in this regard.  
%For instance, group embeddings have 
%resolution of singularities in arbitrary 
%characteristic, so many results from Section 6 
%hold for 
%projective group embeddings 
%in arbitrary characteristic (as they do in 
%the particular case of toric varieties). 
%This will be pursued in \cite{go:emb}. 

\smallskip

Finally, observe 
that, when looking for concrete 
examples, topological $\Q$-filtrations 
are slightly more approachable, for 
they are built using the classical topology 
of a complex variety, 
and could 
be obtained e.g. via 
Hamiltonian actions.  
%(on the topological side) or the methods of 
%\cite{bri:rat} (on the algebraic side). 
%(rather than the Zariski topology). 
Our Theorem \ref{sp_rs_impl_crs} 
guarantees that 
the topological knowledge 
thus acquired  
gets transformed into algebraic 
information about the Chow groups. 
This provides examples of singular 
spherical varieties for which the 
cycle map is an isomorphism    
(e.g. rationally smooth 
group embeddings). 
It is worth noting, however,   
that the study of 
%for 
algebraically $\Q$-filtrable 
varieties can be carried out 
intrinsically, via equivariant intersection theory. 
%varieties there is no need to use 
%the cycle map %, or comparison results 
%with cohomology, 
%as the results of this paper show.  
%On the other hand, algebraically $\Q$-filtrable 
%varieties are, at least on the class of 

\end{document}